\newtheorem{theorem}[subsection]{Theorem}
\newtheorem{prop}[subsection]{Proposition}
\newtheorem{lemma}[subsection]{Lemma}
\newtheorem{corollary}[subsection]{Corollary}
\theoremstyle{definition}
\newtheorem{definition}[subsection]{Definition}
\newtheorem{example}[subsection]{Example}
\newtheorem{notation}[subsection]{Notation}
\newtheorem{rmk}[subsection]{Remark}
\definecolor{light-gray}{gray}{.65}
\newcommand{\cat}{\EuScript} 
\newcommand{\Top}{{\cat Top}}
\newcommand{\M}{\mathbb{M}}
\newcommand{\R}{\mathbb{R}}
\newcommand{\Z}{\mathbb{Z}}
\renewcommand{\H}{\tilde{H}}
\renewcommand{\dot}{\small{$\bullet$}}
\DeclareMathOperator{\cof}{cof}
\DeclareMathOperator{\coker}{coker}
\DeclareMathOperator{\im}{im}
\newcommand{\cone}[3]{
	\draw[thick, #3] (#1+1/2,5) -- (#1+1/2,#2+1/2) -- (5-#2+#1,5);
	\draw[thick, #3] (#1+1/2,-5) -- (#1 +1/2, #2 -1.5) -- (-3-#2+#1,-5)
}
\newcommand{\anti}[3]{
	\draw[thick, #3] (#1+1/2, -5) -- (#1+1/2, 5);
	\draw[thick, #3] (#1+#2+1/2, -5) -- (#1+#2+1/2, 5);
	{\ifthenelse{#2>1}{
	\foreach \y in {-5,...,2} {
	\draw[thick, #3] (#1+1/2, \y +1/2) -- (#1+#2+1/2, \y + #2+1/2);
	}
	}{
	\foreach \y in {-5,-4,-3,-2,-1,0,1,2,3} {\draw[thick, #3] (#1+1/2, \y +1/2) -- (#1+#2+1/2, \y + #2+1/2);}}
	};
}
\newcommand{\lab}[3]{
\draw[#3] (#1+1/2,5.5) node{\tiny{$#2$}}
}
\begin{document}

\title{The $RO(C_2)$-graded cohomology of $C_2$-Surfaces in $\underline{\Z/2}$-coefficients}



\begin{abstract} 
A surface with an involution can be viewed as a $C_2$-space where $C_2$ is the cyclic group of order two. Up to equivariant isomorphism, all involutions on surfaces were classified in \cite{BCNS} and recently classified using equivariant surgery in \cite{D2}. We use the classification given in \cite{D2} to compute the $RO(C_2)$-graded Bredon cohomology of all $C_2$-surfaces in constant $\Z/2$ coefficients as modules over the cohomology of a point. We show the cohomology depends only on three numerical invariants in the nonfree case, and only on two numerical invariants in the free case.  
\end{abstract}

\author[C. Hazel]{Christy Hazel}
\maketitle

\tableofcontents

\section{Introduction} 

There has been recent interest in $RO(C_2)$-graded cohomology computations, and a number of concrete computations now exist in the literature, such as \cite{dS}, \cite{Ho}, \cite{K2}, \cite{LFdS1}, \cite{LFdS2}, and \cite{S}. Computations have also been done in an extended grading given by the fundamental groupoid, such as in \cite{CHT}. The goal of this paper is to compute the $RO(C_2)$-graded cohomology of all $C_2$-surfaces in constant $\Z/2$ coefficients, and to state the answer in a concise and coherent way based on a few properties of the space and its action. We show the answers are entirely dependent on three invariants of the $C_2$-surface in the nonfree case, and two invariants in the free case. Even better, the cohomology can be formulaically described in terms of these invariants.  In the appendix of this paper, we prove the existence of a top cohomology class in $\underline{\Z/2}$-coefficients for nonfree $C_2$-manifolds of any dimension. \smallskip

Let us begin by recalling some facts about Bredon cohomology. For a finite group $G$ the Bredon cohomology of a $G$-space is a sequence of abelian groups graded on $RO(G)$, the Grothendieck group of finite-dimensional, real, orthogonal $G$-representations. When $G$ is the cyclic group of order two, recall any $C_2$-representation is isomorphic to a direct sum of trivial representations and sign representations. Thus $RO(C_2)$ is a free abelian group of rank two, and the Bredon cohomology of any $C_2$-space can be regarded as a collection of bigraded abelian groups. 

These bigraded groups fit together to form a bigraded algebra over the cohomology of a point. We will use the notation $H^{*,*}(X;M)$ to denote the Bredon cohomology of a $C_2$-space $X$ with coefficients in $M$. We will often refer to the first grading as the ``topological dimension" and the second grading as the ``weight" (the specific conventions are explained in Section \ref{ch:preliminaries}). Loosely speaking, we can think of the first dimension as measuring something about the underlying topological space, while the second grading measures something about the nontriviality of the action. 

When working in \underline{$\Z/2$}-coefficients, it is shown in \cite{M1} that, as a module over the cohomology of a point, the cohomology of any finite $C_2$-CW complex decomposes into a direct sum of two types of summands. Specifically, the cohomology is given by a direct sum of free modules and shifted copies of the cohomology of antipodal spheres. We provide an introduction to these two types of modules below. 

\subsection{The Basic Pieces} 
Since the cohomology of any $C_2$-space is a bigraded module over a bigraded ring, we can use a grid to record information about the cohomology groups and module structures. For example, the cohomology ring of a point in $\underline{\Z/2}$-coefficients is illustrated on the left-hand grid in the figure below. Each dot represents a copy of $\Z/2$, and the connecting lines indicate properties of the ring structure. For example, the top portion is polynomial in two elements $\rho$ and $\tau$ which are in bidegrees $(1,1)$ and $(0,1)$, respectively. A full description of this ring can be found in Section \ref{ch:preliminaries}, but for now we just provide a picture to give the reader an idea. In practice, it is cumbersome to draw the detailed picture, so instead, we draw the abbreviated version shown on the right. 

\begin{figure}[ht]
	\begin{tikzpicture}[scale=.45]
		\draw[help lines,light-gray] (-5.125,-5.125) grid (5.125, 5.125);
		\draw[<->] (-5,0)--(5,0)node[right]{$p$};
		\draw[<->] (0,-5)--(0,5)node[above]{$q$};
		\cone{0}{0}{black};
		\foreach \y in {0,1,2,3,4}
			\draw (0.5,\y+.5) node{\small{$\bullet$}};
		\foreach \y in {1,2,3,4}
			\draw (1.5,\y+.5) node{\small{$\bullet$}};
		\foreach \y in {2,3,4}
			\draw (2.5,\y+.5) node{\small{$\bullet$}};
		\foreach \y in {3,4}
			\draw (3.5,\y+.5) node{\small{$\bullet$}};
		\foreach \y in {4}
			\draw (4.5,\y+.5) node{\small{$\bullet$}};
		\foreach \y in {-1,-2,-3,-4}
			\draw (.5,\y-.5) node{\small{$\bullet$}};
		\foreach \y in {-2,-3,-4}
			\draw (-.5,\y-.5) node{\small{$\bullet$}};
		\foreach \y in {-3,-4}
			\draw (-1.5,\y-.5) node{\small{$\bullet$}};
		\foreach \y in {-4}
			\draw (-2.5,\y-.5) node{\small{$\bullet$}};
		\draw[thick](1.5,5)--(1.5,1.5);
		\draw[thick](2.5,2.5)--(2.5,5);
		\draw[thick](3.5,3.5)--(3.5,5);
		\draw[thick](4.5,4.5)--(4.5,5);
		\draw[thick](.5,1.5)--(4,5);
		\draw[thick](.5,2.5)--(3,5);
		\draw[thick](.5,3.5)--(2,5);
		\draw[thick](.5,4.5)--(1,5);
		\draw[thick](-.5,-2.5)--(-.5,-5);
		\draw[thick](-1.5,-3.5)--(-1.5,-5);
		\draw[thick](-2.5,-4.5)--(-2.5,-5);
		\draw[thick](.5,-2.5)--(-2,-5);
		\draw[thick](.5,-3.5)--(-1,-5);
		\draw[thick](.5,-4.5)--(0,-5);
	\end{tikzpicture}\hspace{0.5in}
	\begin{tikzpicture}[scale=.45]
		\draw[help lines,light-gray] (-5.125,-5.125) grid (5.125, 5.125);
		\draw[<->] (-5,0)--(5,0)node[right]{$p$};
		\draw[<->] (0,-5)--(0,5)node[above]{$q$};
		\cone{0}{0}{black};
	\end{tikzpicture}
	\caption{ The ring $\M_2=H^{*,*}(pt;\underline{\Z/2}$) with $H^{p,q}(pt;\underline{\Z/2})$ in spot $(p,q)$.}
\end{figure}

Let $S^n_a$ denote the $C_2$-space whose underlying space is $S^n$ and whose $C_2$-action is given by the antipodal map. Note $S^0_a$ is the free orbit $C_2$. We denote the cohomology of the space $S^n_a$ by $A_n$. This $\M_2$-module can be described algebraically as $A_n\cong \tau^{-1}\M_2/(\rho^{n+1})$. In the figure below, we illustrate the module structure of $A_0$, $A_1$, and $A_2$, respectively. As before, the detailed picture is shown on the left, while the abbreviated picture is shown to the right. The dots again indicate a copy of $\Z/2$ while the lines indicate the module structure. A full description of the module $A_n$ is given in Section \ref{ch:comptools}.

\begin{figure}[ht]
	\begin{tikzpicture}[scale=.45]
		\draw[help lines,light-gray] (-0.125,-5.125) grid (2.125, 5.125);
		\draw[<->] (-0,0)--(2,0)node[right]{$p$};
		\draw[<->] (0,-5)--(0,5)node[above]{$q$};
		\anti{0}{0}{black};
		\foreach \y in {-5,...,4}
			\draw (0.5,\y+.5) node{\small{$\bullet$}};
	\end{tikzpicture}
	\begin{tikzpicture}[scale=.45]
		\draw[help lines,light-gray] (-0.125,-5.125) grid (2.125, 5.125);
		\draw[<->] (-0,0)--(2,0)node[right]{$p$};
		\draw[<->] (0,-5)--(0,5)node[above]{$q$};
		\anti{0}{0}{black};
	\end{tikzpicture}\hspace{0.2 in}
	\begin{tikzpicture}[scale=.45]
		\draw[help lines,light-gray] (-0.125,-5.125) grid (2.125, 5.125);
		\draw[<->] (-0,0)--(2,0)node[right]{$p$};
		\draw[<->] (0,-5)--(0,5)node[above]{$q$};
		\anti{0}{1}{black};
		\foreach \y in {-5,...,4}
			{
			\draw (0.5,\y+.5) node{\small{$\bullet$}};
			\draw (1.5,\y+.5) node{\small{$\bullet$}};
			}
		\draw[thick] (.5,4.5)--(1,5);
		\draw[thick] (1,-5)--(1.5,-4.5);
	\end{tikzpicture}
	\begin{tikzpicture}[scale=.45]
		\draw[help lines,light-gray] (-0.125,-5.125) grid (2.125, 5.125);
		\draw[<->] (-0,0)--(2,0)node[right]{$p$};
		\draw[<->] (0,-5)--(0,5)node[above]{$q$};
		\anti{0}{1}{black};
		\draw[thick] (.5,4.5)--(1,5);
		\draw[thick] (1,-5)--(1.5,-4.5);
	\end{tikzpicture}\hspace{0.2in}
	\begin{tikzpicture}[scale=.45]
		\draw[help lines,light-gray] (-0.125,-5.125) grid (3.125, 5.125);
		\draw[<->] (-0,0)--(3,0)node[right]{$p$};
		\draw[<->] (0,-5)--(0,5)node[above]{$q$};
		\anti{0}{2}{black};
		\foreach \y in {-5,...,4}
			{
			\draw (0.5,\y+.5) node{\small{$\bullet$}};
			\draw (1.5,\y+.5) node{\small{$\bullet$}};
			\draw(2.5,\y+.5) node{\small{$\bullet$}};
			}
		\draw[thick] (.5,4.5)--(1,5);
		\draw[thick] (1,-5)--(2.5,-3.5);
		\draw[thick] (1.5,5)--(1.5,-5);
		\draw[thick] (.5,3.5)--(2,5);
		\draw[thick] (2,-5)--(2.5,-4.5);
	\end{tikzpicture}
	\begin{tikzpicture}[scale=.45]
		\draw[help lines,light-gray] (-0.125,-5.125) grid (3.125, 5.125);
		\draw[<->] (-0,0)--(3,0)node[right]{$p$};
		\draw[<->] (0,-5)--(0,5)node[above]{$q$};
		\anti{0}{2}{black};
		\draw[thick] (.5,4.5)--(1,5);
		\draw[thick] (1,-5)--(2.5,-3.5);
		\draw[thick] (.5,3.5)--(2,5);
		\draw[thick] (2,-5)--(2.5,-4.5);
	\end{tikzpicture}	
	
	\caption{The $\M_2$-modules $A_0$, $A_1$, and $A_2$, repsectively.}
\end{figure}
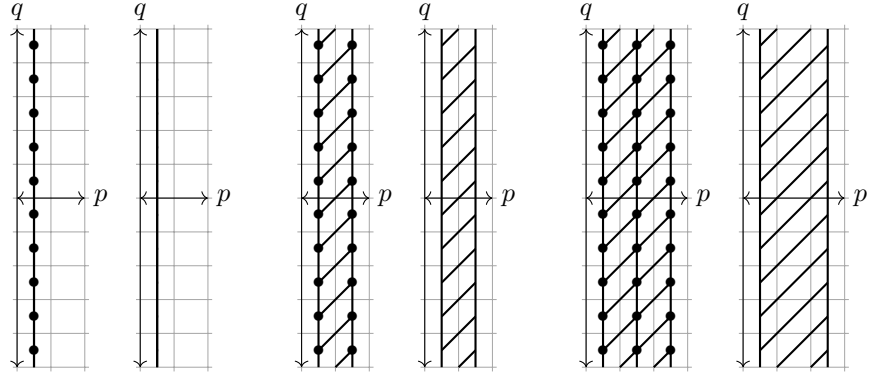

It is shown in \cite{M1} that as an $\M_2$-module, the cohomology any finite $C_2$-CW complex is isomorphic to a direct sum of shifted copies of $\M_2$ and shifted copies of $A_j$ for some values of $j$. Our goal in this paper is to find the specific decompositions for the cohomology of all $C_2$-surfaces. We begin by showing a few examples.

\subsection{Nonfree Examples} 
To give the reader a flavor of the sorts of decompositions that can appear, we provide three examples of $C_2$-surfaces and state each of their cohomologies.

\subsubsection*{Example 1} Let $X_1$ denote the $C_2$-space whose underlying space is the genus one torus, and whose action is given by the reflection action. The space $X_1$ is depicted below with the fixed set $X_1^{C_2}$ shown in {\textbf{\color{blue}{blue}}}. We will eventually show as an $\M_2$-module
	\[
	H^{*,*}(X_1;\underline{\Z/2}) \cong \M_2\oplus \Sigma^{1,0}\M_2 \oplus \Sigma^{1,1} \M_2 \oplus \Sigma^{2,1}\M_2.
	\]
This module is illustrated below. 
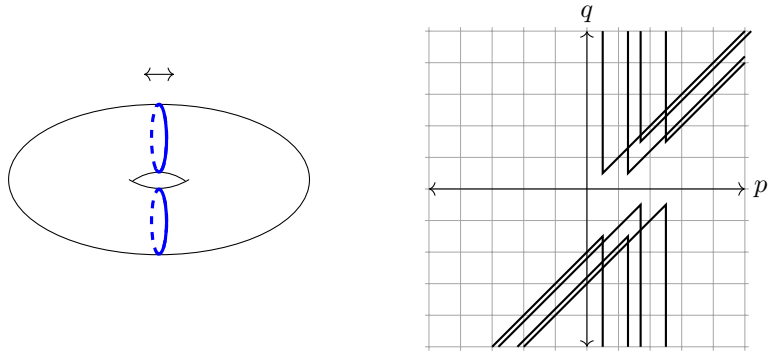
\begin{figure}[ht]
	\begin{subfigure}[b]{0.45\textwidth}
	\centering
	\begin{tikzpicture}[scale=1]
		\draw[<->] (-.2,1.4)--(.2,1.4);
		\draw (0,0) ellipse (2cm and 1cm);
		\draw (-0.4,0) to[out=330,in=210] (.4,0) ;
		\draw (-.35,-.02) to[out=35,in=145] (.35,-.02);
		\draw[very thick,blue] (0,.55) ellipse (.1cm and 0.45cm);
		\draw[white,fill=white] (-.3,0.15) rectangle (0,.96);
		\draw[very thick,blue,dashed] (0,.55) ellipse (.1cm and 0.45cm);
		\draw[very thick,blue] (0,-.56) ellipse (.1cm and 0.43cm);
		\draw[white,fill=white] (-.3,-0.95) rectangle (0,-.14);
		\draw[very thick,blue,dashed] (0,-.56) ellipse (.1cm and 0.43cm);
	\end{tikzpicture}
	\vspace{0.5in}
	\end{subfigure}
	\begin{subfigure}[b]{0.45\textwidth}
	\centering
	\begin{tikzpicture}[scale=0.42]
		\draw[help lines,light-gray] (-5.125,-5.125) grid (5.125, 5.125);
		\draw[<->] (-5,0)--(5,0)node[right]{$p$};
		\draw[<->] (0,-5)--(0,5)node[above]{$q$};
		\cone{0}{0}{black};
		\cone{1.2}{1}{black};
		\draw[thick] (1.3,5)--(1.3,.5)--(5,4.2);
		\draw[thick] (1.3,-5)--(1.3, -1.5)--(-2.2,-5);
		\draw[thick] (2.5,5)--(2.5,1.5)--(5,4);
		\draw[thick] (2.5,-5)--(2.5,-.5)--(-2,-5);
	\end{tikzpicture}
	\end{subfigure}
	\caption{The space $X_1$ and its cohomology.}
\end{figure}

Observe the cohomology is free over $\M_2$, and there is exactly one generator in topological dimension zero, exactly two generators in topological dimension one, and exactly one generator in topological dimension two (recall $p$ is the topological dimension). This should be unsurprising based on the singular cohomology of the torus, though the weights of these generators are more mysterious.

\subsubsection*{Example 2} Let $X_2$ denote the $C_2$-surface whose underlying space is the genus $7$ torus, and whose $C_2$-action is given by the rotation action depicted below. The fixed set consists of $8$ isolated points that are shown in blue. The cohomology of $X_2$ is given by
	\[
	H^{*,*}(X_2;\underline{\Z/2}) \cong \M_2 \oplus \left(\Sigma^{1,1}\M_2\right)^{\oplus 6} \oplus \left(\Sigma^{1,0} A_0\right)^{\oplus 4} \oplus \Sigma^{2,2}\M_2.
	\]
The module is illustrated below. 
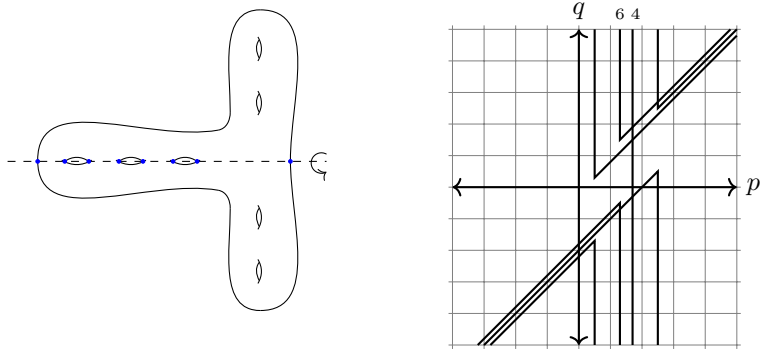
\begin{figure}[ht]
\begin{subfigure}[b]{0.45\textwidth}
	\centering
	\begin{tikzpicture}[scale=0.8]
		\draw[dashed](-2,-0.52)--(3.3,-0.52);
		\draw[->] (3.3,-0.4) arc (60:300:.17cm);
		\draw (1.5,0) to[out=190,in=90] (-1.5,-.5);
		\draw (1.5,-1) to[out=170,in=270] (-1.5,-.5);
		\draw (1.5,0) to[out=10,in=270] (1.7,0.3);
		\draw (1.7,0.3) to[out=90,in=180] (2.2,2);
		\draw (2.2,2) to[out=0,in=90] (2.7,-0.5);
		\draw (2.7,-0.5) to[out=270,in=0] (2.2,-3);
		\draw (2.2,-3) to [out=180,in=270] (1.7,-1.3);
		\draw (1.7,-1.3) to [out=90,in=350] (1.5,-1);
		\draw (-1.1,-.5) to[out=330,in=210] (-.6,-.5) ;
		\draw (-1.05,-.52) to[out=35,in=145] (-.65,-.52);
		\draw (-0.2,-.5) to[out=330,in=210] (.3,-.5) ;
		\draw (-.15,-.52) to[out=35,in=145] (.25,-.52);
		\draw (0.7,-.5) to[out=330,in=210] (1.2,-.5) ;
		\draw (.75,-.52) to[out=35,in=145] (1.15,-.52);
		\draw (2.16,1.15) to[out=60,in=300] (2.16,1.55);
		\draw (2.18,1.2) to[out=115,in=235] (2.18,1.5);
		\draw (2.16,0.25) to[out=60,in=300] (2.16,0.65);
		\draw (2.18,0.3) to[out=115,in=235] (2.18,0.6);
		\draw (2.16,-1.65) to[out=60,in=300] (2.16,-1.25);
		\draw (2.18,-1.6) to[out=115,in=235] (2.18,-1.3);
		\draw (2.16,-2.55) to[out=60,in=300] (2.16,-2.15);
		\draw (2.18,-2.5) to[out=115,in=235] (2.18,-2.2);
		\draw[blue,fill=blue] (-1.5,-.52) circle (0.03cm);
		\draw[blue,fill=blue] (-1.05,-.52) circle (0.03cm);
		\draw[blue,fill=blue] (-.65,-.52) circle (0.03cm);
		\draw[blue,fill=blue] (-.15,-.52) circle (0.03cm);
		\draw[blue,fill=blue] (0.25,-.52) circle (0.03cm);
		\draw[blue,fill=blue] (0.75,-.52) circle (0.03cm);
		\draw[blue,fill=blue] (1.15,-.52) circle (0.03cm);
		\draw[blue,fill=blue] (2.7,-.52) circle (0.03cm);
	\end{tikzpicture}
	\vspace{0.2in}
\end{subfigure}
\begin{subfigure}[b]{0.45\textwidth}
	\centering
	\begin{tikzpicture}[scale=0.42]
		\draw[help lines,gray] (-4.125,-5.125) grid (5.125, 5.125);
		\draw[thick,<->] (-4,0)--(5,0)node[right]{$p$};
		\draw[thick,<->] (0,-5)--(0,5)node[above]{$q$};
		\draw[thick] (0.5,5)--(0.5,0.3)--(5,4.8);
		\draw[thick] (0.5,-5)--(0.5,-1.7)--(-2.8,-5);
		\draw[thick] (1.3,5)--(1.3,1.5)--(4.8,5);
		\draw[thick] (1.3,-5)--(1.3,-.5)--(-3.2,-5);
		\lab{.8}{6}{black};
		\draw[thick] (1.7,5)--(1.7,-5);
		\lab{1.3}{4}{black};
		\draw[thick] (2.5,-5)--(2.5,0.5)--(-3,-5);
		\draw[thick] (2.5,5)--(2.5,2.5)--(5,5);
	\end{tikzpicture}
\end{subfigure}
	\caption{The space $X_2$ and its cohomology.}
\end{figure}

Again there is exactly one free generator in topological dimensions zero and two, but there is something more interesting going on in topological dimension one. There are four nonfree summands, and six free summands in weight one. 

\subsubsection*{Example 3} 
Our last example $X_3$ is the $C_2$-space whose underlying space is $\R P^2$ and whose $C_2$-action is depicted below. Note the fixed set contains both a fixed circle and a fixed point; this did not happen in the previous examples. The cohomology of this space is given by
	\[
	H^{*,*}(X_3;\underline{\Z/2}) \cong \M_2 \oplus \Sigma^{1,1}\M_2 \oplus \Sigma^{2,1}\M_2.
	\]
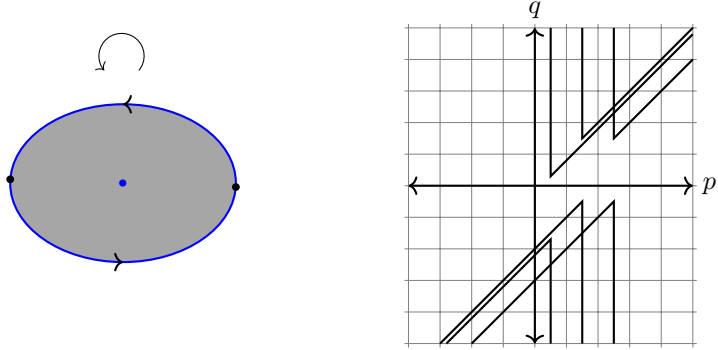
\begin{figure}[ht]
	\begin{subfigure}[b]{0.45\textwidth}
	\centering
	\begin{tikzpicture}[scale=1.5]
		\draw[blue,thick, fill=light-gray, decoration={markings, mark=at position 0 with {\arrow[black,scale=.5]{*}}, mark=at position 0.25 with {\arrow[black]{>}}, mark=at position 0.5 with {\arrow[black,scale=.5]{*}}, mark=at position 0.75 with {\arrow[black]{>}}}, postaction={decorate}] (0,0) ellipse (1 cm and .7 cm);
		\draw[blue] (0,0)node {\tiny{$\bullet$}};
		\draw[->] (.14,1) arc (-40:220:.20cm);
	\end{tikzpicture}
	\vspace{0.4in}
	\end{subfigure}
	\begin{subfigure}[b]{0.45\textwidth}
	\centering
	\begin{tikzpicture}[scale=0.42]
		\draw[help lines,gray] (-4.125,-5.125) grid (5.125, 5.125);
		\draw[thick,<->] (-4,0)--(5,0)node[right]{$p$};
		\draw[thick,<->] (0,-5)--(0,5)node[above]{$q$};
		\draw[thick] (0.5,5)--(0.5,0.3)--(5,4.8);
		\draw[thick] (0.5,-5)--(0.5,-1.7)--(-2.8,-5);
		\draw[thick] (1.5,5)--(1.5,1.5)--(5,5);
		\draw[thick](1.5,-5)--(1.5,-.5)--(-3,-5);
		\draw[thick] (2.5,-5)--(2.5,-.5)--(-2,-5);
		\draw[thick] (2.5,5)--(2.5,1.5)--(5,4);
	\end{tikzpicture}
	\end{subfigure}
	\caption{The space $X_3$ and its cohomology.}
\end{figure}

\smallskip
With some care, one can compute the cohomology of the above spaces by hand using tools given in Section \ref{ch:comptools}. Though, it is not obvious how we could have predicted these answers by just looking at the spaces. What properties are being detected by the cohomology? In order to answer this question, we next define some invariants of $C_2$-surfaces, some of which will be detected by the cohomology.

\subsection{Invariants of $C_2$-surfaces} 
There are a handful of invariants that can be associated to a given $C_2$-surface. For example, we can count the number of isolated fixed points, or the number of fixed circles. Recall for a nonequivariant surface $X$ the homeomorphism type is entirely determined by two invariants, namely $\dim_{\Z/2}H^{1}_{sing}(X;\Z/2)$ and whether or not $X$ is orientable. In what follows, we will denote $\dim_{\Z/2}H^{1}_{sing}(X;\Z/2)$ by $\beta(X)$ and refer to this as the \emph{$\beta$-genus} of $X$.

In \cite{D2} it is shown there is a list of invariants that uniquely determines the isomorphism type of a $C_2$-action on a given surface. We recall two of these invariants below.
\begin{definition} Let $X$ be a $C_2$-surface. We can associate the following invariants to $X$:
	\begin{enumerate}
		\item[(i)] $F(X)$ is the number of isolated fixed points;
		\item[(ii)] $C(X)$ is the number of fixed circles.
	\end{enumerate}
\end{definition}
When there is no ambiguity about the space we are discussing, we will simply write $F$ instead of $F(X)$ and $C$ instead of $C(X)$. There are four other invariants needed in the classification given in \cite{D2} that we will not define here, but do note the invariants defined above are not enough to uniquely determine the isomorphism type in general. 

In the examples above, we can compute
	\[
	\beta(X_1)=2,\quad F(X_1)=0, \quad C(X_1)=2;
	\]
	\[
	\beta(X_2)=14,\quad F(X_2)=8, \quad C(X_2)=0;
	\]
	\[
	\beta(X_3)=1,\quad F(X_3)=1,\quad C(X_3)=1.
	\]
This paper addresses the following questions: how does the cohomology of a $C_2$-surface relate to the invariants? Better yet, can we find a formula that gives the cohomology of $X$ based on some of these invariants? It may not be apparent how to do this based on the three given examples, but the answer to the latter is yes, as explained below.

\subsection{The Answer for Nonfree $C_2$-surfaces}
We now state the decompositions for nontrivial, nonfree $C_2$-surfaces in \underline{$\Z/2$}-coefficients. Recall the fixed set of an involution on a surface is always given by a disjoint union of isolated points and copies of $S^1$. Recall $\M_2$ denotes the cohomology of a point and $A_0$ denotes the cohomology of the free orbit $C_2$. We will show the following:
\newpage
\begin{theorem}\label{intro1} Let $X$ be a nontrivial, nonfree $C_2$-surface. There are two cases for the $RO(C_2)$-graded Bredon cohomology of $X$ in \underline{$\Z/2$}-coefficients.
\begin{enumerate}
	\item[(i)] Suppose $C=0$. Then
		\[
		H^{\ast, \ast}(X; \underline{\Z/2})\cong \M_2 \oplus \left(\Sigma^{1,1} \M_2\right)^{\oplus F-2} \oplus \left(\Sigma^{1,0}A_0 \right)^{\oplus \frac{\beta-F}{2}+1} \oplus \Sigma^{2,2}\M_2
		\]
	\item[(ii)] Suppose $C\neq 0$. Then 
		\begin{align*}
			H^{\ast, \ast}(X; \underline{\Z/2})\cong & ~\M_2 \oplus \left(\Sigma^{1,1} \M_2\right)^{\oplus F+C-1} \oplus  \left(\Sigma^{1,0} \M_2\right)^{\oplus C-1}\\
			&\oplus \left(\Sigma^{1,0}A_0 \right)^{\oplus \frac{\beta-F}{2}+1-C}\oplus \Sigma^{2,1}\M_2
		\end{align*}
\end{enumerate}
\end{theorem}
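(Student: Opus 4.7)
The plan is to combine the equivariant surgery classification of $C_2$-surfaces from \cite{D2} with the structure theorem of \cite{M1}, which guarantees that $H^{*,*}(X;\underline{\Z/2})$ decomposes as a direct sum of shifted copies of $\M_2$ and shifted copies of $A_j$ for various $j \geq 0$. The task therefore reduces to identifying \emph{which} shifted summands occur and with what multiplicities.

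First I would establish the formula on a short list of base cases for the classification, such as $S^2$ with the rotation action ($F=2$, $C=0$), $S^2$ with the reflection action ($F=0$, $C=1$), and $\R P^2$ with each of its nontrivial actions. For these I would write down an explicit minimal $C_2$-CW decomposition and compute the cohomology directly from the cellular cofiber sequences, verifying that the answer matches the formula.

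Next, I would induct using equivariant surgery. By \cite{D2}, every nontrivial nonfree $C_2$-surface is built from one of these base cases by a finite sequence of standard equivariant surgeries (attaching a free handle, an equivariant handle joining two isolated fixed points, a reflection handle producing a new fixed circle, and so on). Each surgery $X' \rightsquigarrow X$ produces a cofiber sequence whose associated long exact sequence relates $H^{*,*}(X;\underline{\Z/2})$ to $H^{*,*}(X';\underline{\Z/2})$ plus the cohomology of an explicit representation sphere. A case-by-case analysis of the surgery operations, paired with a parallel accounting of how $\beta$, $F$, $C$ change, then shows inductively that the two-case formula continues to hold, with the free/rotation-type case (i) keeping the top class in bidegree $(2,2)$ and the reflection-type case (ii) putting the top class in bidegree $(2,1)$.

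The main obstacle is resolving the extension problems that arise when passing from a long exact sequence of $\M_2$-modules to a direct-sum decomposition. In particular, the additive data in each bidegree does not by itself distinguish between a summand $\Sigma^{a,b}A_0$ and the formal sum $\Sigma^{a,b}\M_2 \oplus \Sigma^{a+1,b-1}\M_2$, since these agree as groups in many bidegrees but differ in their $\rho$- and $\tau$-action. I plan to handle these ambiguities by exploiting the existence of a top cohomology class (proved in the appendix), whose weight is forced by local representation data at the fixed set, together with the multiplicative structure of $H^{*,*}(X;\underline{\Z/2})$ and restriction maps to the fixed-set cohomology. A final sanity check is to verify that the predicted answer depends only on $(\beta, F, C)$ and not on the four additional invariants of the classification of \cite{D2}; this reduces to showing that surgeries altering only the extra invariants leave the cohomology unchanged.
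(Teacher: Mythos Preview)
Your proposal is essentially the same strategy the paper uses: induct on $\beta$-genus via the surgery classification of \cite{D2}, analyze the cofiber sequence attached to each surgery, and resolve the resulting extension problems using the top-class theorem from the appendix together with the structure theorem of \cite{M1}. Two technical points deserve mention, as they are where the actual work lies and your sketch glosses over them. First, for $S^{1,0}$- and $S^{1,1}$-surgery the cofiber of the attached circle is not $Y$ itself but a pinched space $\tilde{Y}=\cof(C_{2+}\hookrightarrow Y_+)$; the paper handles this by proving the homotopy equivalence $\tilde{Y}\simeq Y\vee S^{1,1}$ whenever $Y$ has a fixed point, which is a key lemma you would need to discover. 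Second, the inductive step bifurcates according to whether the pre-surgery surface $Y$ is free or nonfree; when $Y$ is free the induction hypothesis does not apply and one must feed in the free-surface computation (your formulas for $S^2_a\#_2 Z$ and $T\#_2 Z$) as separate lemmas, and the classification of \cite{D2} also singles out doubling spaces as a separate base case requiring their own cofiber-sequence argument. Your list of base cases is slightly off---only the two nontrivial $C_2$-spheres are genuine $\beta=0$ base cases, while $\R P^2$ falls under the inductive step---and the multiplicative structure and restriction-to-fixed-set arguments you mention are not actually needed, as the quotient lemma and Theorem~\ref{topm2} suffice to pin down every differential and extension.
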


We invite the reader to check that the above formulas match the answers given in the three examples. Note not all of the invariants given in \cite{D2} are needed to determine the cohomology. In particular, the module structure of the cohomology in $\underline{\Z/2}$-coefficients does not determine the isomorphism type of a $C_2$-surface. 

\begin{rmk} As observed in our examples, there is exactly one summand generated in topological dimension zero, exactly one summand generated in topological dimension two, and some number of summands appearing in topological dimension one. Based on what we know of the singular cohomology of surfaces, this shouldn't be surprising. Though, the exact number and type of summands generated in topological dimension one is nonobvious. Note we can recover dimension of the singular cohomology of $X$ by
	\[
	\beta=2\cdot \# \left(\Sigma^{1,0}A_0 \text{-summands}\right) + \# \left(\Sigma^{1,0}\M_2 \text{-summands}\right) + \# \left(\Sigma^{1,1}\M_2\text{-summands}\right).
	\]
\end{rmk}

\subsection{Equivariant Connected Sum}
Before stating the decomposition for free actions, we need to define one construction. Given a nontrivial equivariant surface $X$ and a nonequivariant surface $Y$, we can form the equivariant connected sum $X\#_2 Y$ as follows. Let $Y'$ denote the space obtained by removing a small disk from $Y$. Let $D$ be a disk in $X$ that is disjoint from its conjugate disk, $\sigma D$, and let $X'$ denote the space obtained by removing both of these disks. Choose an isomorphism $f:\partial Y' \to \partial D$. Then the space $X \#_2 Y$ is given by 
\[[\left(Y' \times \{0\}\right) \sqcup \left(Y' \times \{1\} \right)\sqcup X']/\sim\]
where $(y,0) \sim f(y)$ and $(y,1) \sim \sigma(f(y))$ for $y\in \partial Y'$. Note nonequivariantly, $X\#_2Y\cong Y\#X\#Y$.
Below is an example of when $X=S^2_a$ and $Y=T_1$ is the genus one torus.
\begin{figure}[ht]
	\begin{tikzpicture}[scale=.8]
		\draw (0,0) to[out=40,in=140] (2,0);
		\draw[thick, red] (2,0) to[out=320,in=210] (2.5,0);
		\draw[thick, red](2.5,0) to[out=20,in=90] (3.5,-.5);
		\draw (0,-1) to[out=320,in=220] (2,-1);
		\draw[thick, red] (2,-1) to[out=40,in=160] (2.5,-1);
		\draw[thick, red] (2.5,-1) to[out=340,in=270] (3.5,-.5);
		\draw[thick, red] (0,0) to[out=220,in=330] (-.5,0);
		\draw[thick, red] (-.5,0) to[out=160,in=90] (-1.5,-.5);
		\draw[thick, red] (0,-1) to[out=140,in=20] (-.5,-1);
		\draw[thick, red] (-.5,-1) to[out=200,in=270] (-1.5,-.5);
		\draw[thick, red] (2.6,-.5) to[out=330,in=210] (3.1,-.5);
		\draw[thick, red] (2.65,-.52) to[out=35,in=145] (3.05,-.52);
		\draw[thick, red] (-1.1,-.5) to[out=330,in=210] (-.6,-.5) ;
		\draw[thick, red] (-1.05,-.52) to[out=35,in=145] (-.65,-.52);
		\draw[thick, red] (0,0) to[out=240,in=120] (0,-1);
		\draw[thick, dashed,red] (0,0) to[out=300,in=60] (0,-1);
		\draw[thick, dashed,red] (2,0) to[out=240,in=120] (2,-1);
		\draw[thick, red] (2,0) to[out=300,in=60] (2,-1);
		\draw (1,.38) to[out=240,in=120] (1,-1.38);
		\draw[dashed] (1,.38) to[out=300,in=60] (1,-1.38);
	\end{tikzpicture}
	\caption{$S^2_a\#_2 {\mathbf{\color{red}{T_1}}}$}
\end{figure}

In \cite{D2}, it is shown that there is exactly one free action on the sphere up to equivariant isomorphism, namely the antipodal action, and there are exactly two free actions on the torus, namely the antipodal action and the action given by rotating 180$^\circ$ around an axis through the center of the hole of the torus. Interestingly, it is also shown for every free $C_2$-surface $X$ there is a surface $Y$ such that $X$ is isomorphic to $Z\#_2 Y$ where $Z$ is either the free $C_2$-sphere or one of the two free $C_2$-tori. 

\subsection{The Answer for Free $C_2$-surfaces}
We can now state the theorem for free $C_2$-surfaces. Recall $A_n$ denotes the cohomology of $S^n$ with the antipodal action. We will prove the following: 
\begin{theorem}\label{intro2} Let $X$ be a free $C_2$-surface. There are two cases for the $RO(C_2)$-graded Bredon cohomology of $X$ in \underline{$\Z/2$}-coefficients.
\begin{enumerate}
	\item[(i)] Suppose $X$ is equivariantly isomorphic to $S^2_a\#_2Y$ where $Z$ is the free $C_2$-sphere. Then
	\[
	H^{\ast, \ast}(X; \underline{\Z/2})\cong  \left( \Sigma^{1,0}A_0 \right)^{\oplus \beta(X)/2}\oplus A_2 
	\]
	\item[(ii)] Suppose $X$ is equivariantly isomorphic to $Z\#_2 Y$ where $Z$ is a free $C_2$-torus. Then
	\[
	H^{\ast, \ast}(X; \underline{\Z/2})\cong  \left( \Sigma^{1,0}A_0 \right)^{\oplus \frac{\beta(X)-2}{2}}\oplus A_1 \oplus \Sigma^{1,0}A_1 
	\]
\end{enumerate}
\end{theorem}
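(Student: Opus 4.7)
The plan is to induct on the equivariant connected sum decomposition of free $C_2$-surfaces established in \cite{D2}. The argument splits into three parts: base cases, a Mayer--Vietoris tool for the equivariant connected sum, and the inductive step itself.

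For the base cases, $H^{*,*}(S^2_a; \underline{\Z/2}) \cong A_2$ is immediate from the definition of $A_n$. For each of the two free $C_2$-tori $Z$, I would impose a minimal equivariant CW-structure with one free orbit of $0$-cells, two free orbits of $1$-cells, and one free orbit of $2$-cells. Computing the cellular cochain complex in each weight produces the correct bigraded abelian group; combined with the decomposition theorem of \cite{M1} and inspection of the $\rho$-action, this identifies the module structure as $A_1 \oplus \Sigma^{1,0} A_1$.

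The key inductive tool is to show that for any free $C_2$-surface $X$ and closed nonequivariant surface $Y$,
\[
H^{*,*}(X \#_2 Y; \underline{\Z/2}) \cong H^{*,*}(X; \underline{\Z/2}) \oplus \left( \Sigma^{1,0} A_0 \right)^{\oplus \beta(Y)}.
\]
I would decompose $X \#_2 Y = X' \cup_{C_2 \times S^1} (C_2 \times Y')$, where $X' = X \setminus (D \sqcup \sigma D)$, $Y' = Y \setminus D$, and $C_2$ acts on the first factor of $C_2 \times Y'$. For any space of the form $C_2 \times W$ with this action, one has $H^{*,*}(C_2 \times W; \underline{\Z/2}) \cong A_0 \otimes H^*_{\mathrm{sing}}(W; \Z/2)$; in particular $H^{*,*}(C_2 \times S^1) \cong A_0 \oplus \Sigma^{1,0} A_0$. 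The Mayer--Vietoris sequence, compared with the analogous sequence for $X$ itself (where $Y'$ is replaced by $D^2$), isolates the extra cohomology as governed by $A_0 \otimes \widetilde H^1(Y; \Z/2)$, yielding exactly $\beta(Y)$ additional copies of $\Sigma^{1,0} A_0$ (the $\widetilde H^2$ contribution is absorbed into the connecting homomorphism). Iterating and using that $\beta$ grows by $2\beta(Y)$ under $\#_2 Y$ then yields both cases of the theorem from the corresponding base case.

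The main obstacle I anticipate is showing that the Mayer--Vietoris sequence splits as a sequence of $\M_2$-modules rather than producing a nontrivial extension of the expected summands. This should reduce to computing $\mathrm{Ext}^1$ between appropriate shifts of $\M_2$, $A_0$, $A_1$, and $A_2$, many of which vanish for weight reasons; any remaining potential extensions should be split by geometric considerations, such as an explicit section coming from the base-case generators or from a subspace inclusion $Z \hookrightarrow X \#_2 Y$. A secondary subtlety is that the two free $C_2$-tori have different quotient surfaces (Klein bottle versus torus), so their cellular computations are genuinely distinct even though they yield the same $\M_2$-module; the base case must therefore be verified uniformly across both.
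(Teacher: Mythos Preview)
Your overall strategy matches the paper's: establish the base cases $S^2_a$, $T_1^{anti}$, $T_1^{rot}$, then prove the connected-sum formula $H^{*,*}(Z\#_2 Y)\cong H^{*,*}(Z)\oplus(\Sigma^{1,0}A_0)^{\oplus\beta(Y)}$ and finish by counting. Two points of comparison are worth recording.

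For the tori, the paper avoids a cellular computation by using the product descriptions $T_1^{anti}=S^{1,1}\times S^1_a$ and $T_1^{rot}=S^{1,0}\times S^1_a$. The cofiber sequence $S^1_{a+}\hookrightarrow T_+\to S^{1,\epsilon}\wedge S^1_{a+}$ then reduces the question to known pieces; the differential vanishes by the quotient lemma, and the extension is split by the projection $\pi_1:T\to S^1_a$. This handles both tori uniformly and bypasses the separate Klein-bottle-versus-torus quotient check you flagged.

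For the connected-sum step, the paper uses the cofiber sequence $(Y'\times C_2)_+\hookrightarrow X_+\to\tilde Z$ rather than Mayer--Vietoris, but the content is the same. The point you should adjust is the resolution of the extension problem: there is no natural subspace inclusion $Z\hookrightarrow Z\#_2 Y$, since disks have been removed from $Z$. What does exist is the collapse map $q:Z\#_2 Y\to Z$ obtained by crushing each copy of $Y'$ to a point. The paper shows, via a map of cofiber sequences with identity on the cofiber $\tilde Z$, that $q^*:H^{*,*}(Z)\to H^{*,*}(Z\#_2 Y)$ is injective; this forces the $\rho^2$-action (in the sphere case) or $\rho$-action (in the torus case) to be nontrivial, which together with the structure theorem pins down the extension. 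Your Ext-vanishing idea would also work but is less direct than this geometric splitting.
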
\

\noindent For example, the space $S^2_a\#_2 T_1$ shown above would have cohomology given by
	\[
	H^{*,*}(S^2_a\#_2 T_1) \cong \left(\Sigma^{1,0}A_0\right)^{\oplus 2} \oplus A_2.
	\]

\begin{rmk} It is not clear a priori why the number of summands of $\Sigma^{1,0}A_0$ given in Theorem \ref{intro1} and Theorem \ref{intro2} is necessarily an integer. This nontrivial fact follows from restrictions on $\beta$, $F$, and $C$ that arise in the classification of $C_2$-surfaces given in \cite{D2}.   
\end{rmk}

\begin{rmk} There are two natural questions given the above theorems. First, one may wonder if there is a geometric interpretation of these results, and specifically, if certain cohomology classes can be represented by equivariant cycles. Indeed, the author has developed a theory of fundamental classes for equivariant submanifolds of $C_2$-manifolds in $\underline{\Z/2}$-coefficients, and it has been shown the cohomology of any $C_2$-surface is generated by such classes. These classes also allow us to easily compute the ring structure. These results can be found in \cite{H1}. It is also natural to ask about the cohomology of $C_2$-surfaces with coefficients in other Mackey functors, such as constant integer coefficients. The orientable case in $\underline{\Z}$-coefficients was done in \cite{LFdS2}. This computation was redone using methods similar to those in this paper and also expanded to all $C_2$-surfaces. These integral computations will appear in \cite{H2}. 
\end{rmk}

\subsection{Organization of the Paper} 
In Section \ref{ch:preliminaries} we give some preliminary facts about Bredon cohomology. Important computational tools used throughout the paper are introduced in Section \ref{ch:comptools}. We then compute the cohomology of all free $C_2$-surfaces and prove Theorem \ref{intro2} in Section \ref{ch:freecomp}. The computation for nonfree surfaces and the proof of Theorem \ref{intro1} are in Section \ref{ch:nonfreecomp}. Lastly Appendix \ref{ch:manfacts} has a proof of a general statement about the cohomology of $C_2$-manifolds that is mentioned in Section \ref{ch:preliminaries}.

\subsection{Acknowledgements}
The work in this paper is part of the author's thesis project at University of Oregon. The author would like to thank her doctoral advisor Dan Dugger for all of his guidance and for many helpful conversations. 
%
%
\section{Preliminaries}\label{ch:preliminaries}
In this section, we review some preliminary facts about $RO(G)$-graded Bredon cohomology in the case of $G=C_2$. Our coefficients are given by Mackey functors, so we provide a definition of a Mackey functor in the case of $G=C_2$ and review the Mackey functor that will be used throughout the paper. We next review how the cohomology theory is a bigraded theory, and how this lends itself to pictorially representing various module and ring structures. 

\subsection{Mackey functors} 
The coefficients of $RO(G)$-graded Bredon cohomology are what is known as a Mackey functor. In general, the definition of a Mackey functor requires some work, and a general exposition of Mackey functors can be found in \cite{S} or in \cite{M2}. In the case of $G=C_2$, the definition can be distilled to the following.

\begin{definition} 
A \textbf{Mackey functor} $M$ for $G=C_2$ is the data of 
\begin{center}
\begin{tikzcd}[column sep=tiny]
M:&M(C_2)\arrow[out=-120, in=-60, loop, distance=0.5cm, "t^*" below ] \arrow[rr,shift right=.6ex,"p_*" below] & ~&M(\ast)\arrow[ll,shift right=.6ex,"p^*" above]
\end{tikzcd}
\end{center}
where $M(C_2)$ and $M(\ast)$ are abelian groups, and $p^*$, $p_*$, $t^*$ are homomorphisms that satisfy 
\begin{enumerate}
	\item[(i)] $(t^*)^2 = id$,
	\item[(ii)] $t^*\circ p^*=p^*$,
	\item[(iii)] $p_*\circ t^*=p_*$, and
	\item[(iv)] $p^*\circ p_*=1+t^*$.
\end{enumerate}
\end{definition}

Given an abelian group $B$, we can form the \emph{constant Mackey functor} $\underline{B}$ where $\underline{B}(C_2)=\underline{B}(\ast)=B$, $t^*=id$, $p_*=2$, and $p^*=id$. We will be concerned with the following constant Mackey functor.
\begin{center}
	\begin{tikzcd}[column sep=tiny]
	\underline{\Z/2}:& \Z/2\arrow[out=-120, in=-60, loop, distance=0.5cm, "1" below ] 	\arrow[rr,shift right=.6ex,"0" below] & ~& \Z/2\arrow[ll,shift right=.6ex,"1" above]
	\end{tikzcd}
\end{center}
\medskip

\subsection{Bigraded Theory}\label{2.3} 
For a group $G$, Bredon cohomology is graded on $RO(G)$, the Grothendieck ring of finite-dimensional, real, orthogonal $G$-representations. When $G$ is the cyclic group of order two, observe any such $C_2$-representation $V$ is isomorphic to a direct sum of copies of the trivial representation $\R_{triv}$ and copies of the sign representation $\R_{sgn}$. Up to isomorphism, $V$ is entirely determined by its dimension and the number of sign representations appearing in this decomposition. It follows that $RO(C_2)$ is a rank $2$ free abelian group with generators given by $[\R_{triv}]$ and $[\R_{sgn}]$. For brevity, we will write $\R^{p,q}$ for the $p$-dimensional representation $\R_{triv}^{p-q}\oplus \R_{sgn}^{q}$. We will also write $\R^{p,q}$ for the element of $RO(C_2)$ that is equal to $(p-q)[\R_{triv}] + q[\R_{sqn}]$. When computing cohomology groups, we will write $H^{p,q}(X;M)$ for the cohomology group $H^{\R^{p,q}}(X;M)$. Note some authors have different grading conventions for $RO(C_2)$, and here we are using what is known as the motivic grading. 

Given any finite-dimensional, real, orthogonal, $G$-representation $V$ we can form the one-point compactification $\hat{V}$. Note this new space will be an equivariant sphere which we will denote $S^V$; such spaces are referred to as \emph{representation spheres}. Using these representation spheres, we can form equivariant suspensions. Whenever we have a based $G$-space $X$, we can form the $V$-th suspension of $X$ by
	\[
	\Sigma^VX=S^V \wedge X.
	\]
Note the basepoint must be a fixed point. Often when working with free spaces we will add a disjoint basepoint in order to form suspensions and cofiber sequences. We use the common notation of $X_+$ for $X \sqcup \{*\}$ where the disjoint basepoint is understood to be fixed by the action.

An important feature of Bredon cohomology is that we have suspension isomorphisms: given any finite dimensional, real, orthogonal $G$-representation, there are natural isomorphisms
	\[
	\Sigma^{V}: \tilde{H}^{\alpha}(-;M) \to \tilde{H}^{\alpha+V}(\Sigma^V(-);M).
	\]
Given a cofiber sequence of based $G$-spaces
	\[
	A\overset{f}{\to} X \to C(f)
	\] 
we can form the Puppe sequence
	\[
	A\to X \to C(f) \to \Sigma^{\mathbf{1}}A \to \Sigma^{\mathbf{1}}C(f) \to \Sigma^{\mathbf{1}}X \to \dots 
	\]
where $\mathbf{1}$ is the one-dimensional trivial representation. From the suspension isomorphism this yields a long exact sequence
	\[
	\H^{V}(A)\leftarrow \H^{V}(X)\leftarrow \H^{V}(C(f)) \leftarrow \H^{V-\mathbf{1}}(A)\leftarrow \H^{V-\mathbf{1}}(X)\leftarrow \dots
	\]
for each representation $V\in RO(G)$. We will make use of such long exact sequences throughout the paper.

When $G=C_2$, we have already discussed how the Bredon cohomology theory is a bigraded theory, and we will carry this notation over when discussing representation spheres and equivariant suspensions. In particular, we will denote $S^{\R^{p,q}}$ by $S^{p,q}$ and for a based space $X$ we will denote $\Sigma^{\R^{p,q}}X$ by $\Sigma^{p,q}X$. Translating the above into this notation, we have natural isomorphisms
\[\Sigma^{p,q}:\tilde{H}^{a,b}(-;M) \to \tilde{H}^{a+p,b+q}(\Sigma^{p,q}(-);M)\]
for all $p,q\geq 0$. Given a cofiber sequence we have long exact sequences
\[\dots\to\H^{p,q}(C(f))\to H^{p,q}(X)\to H^{p,q}(A) \to \H^{p+1,q}(C(f))\to H^{p+1,q}(X)\to \dots\]
for each $q\in \Z$.

During our computations, three particular representation spheres will appear often, namely $S^{1,1}$, $S^{2,1}$, and $S^{2,2}$. We include an illustration of these equivariant spheres below in Figure \ref{fig:spheres}. The fixed set is shown in {\textbf{\color{blue}{blue}}} while the arrow is used to indicate the action of $C_2$ on the space. \smallskip

\begin{figure}[ht]
	\begin{tikzpicture}
		\draw (0,0) circle (1cm);
		\draw[blue] (0,-1) node{$\bullet$};
		\draw[blue] (0,1) node{$\bullet$};
		\draw[dashed] (0,1.4)--(0,-1.4);
		\draw[<->] (-.25,1.2)--(.25,1.2);
		\draw (0,-1.7) node{$S^{1,1}$};
	\end{tikzpicture}\hspace{.5in}
	\begin{tikzpicture}
		\draw (0,0) circle (1cm);
		\draw[blue,very thick] (-1,0) to[out=330,in=210](1,0);
		\draw[blue,very thick, dashed] (-1,0) to[out=30,in=150](1,0);
		\draw (0,-1.7) node{$S^{2,1}$};
		\draw[<->] (1.4,-.2)--(1.4,.2);
	\end{tikzpicture}\hspace{.2in}
	\begin{tikzpicture}
		\draw (0,0) circle (1cm);
		\draw (-1,0) to[out=330,in=210](1,0);
		\draw[dashed] (-1,0) to[out=30,in=150](1,0);
		\draw[dashed] (-1.4,0)--(1.4,0);
		\draw[->] (1.5,.15) arc (60:300:.17cm);
		\draw (0,-1.7) node{$S^{2,2}$};
		\draw[blue] (1,0)node{$\bullet$};
		\draw[blue] (-1,0)node{$\bullet$};
	\end{tikzpicture}
	\caption{Some representation spheres.}
	\label{fig:spheres}
\end{figure}
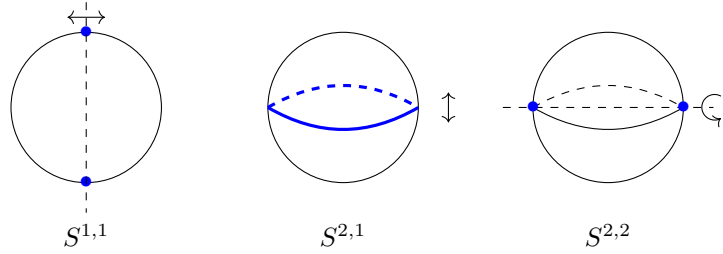

\subsection{The Cohomology of Orbits} 
Given any $C_2$-space $X$ we have an equivariant map $X\to pt$ where $pt$ denotes a single point with the trivial action. On cohomology, this gives a map of rings $H^{*,*}(pt;\underline{\Z/2}) \to H^{*,*}(X;\underline{\Z/2})$. Thus the cohomology of $X$ is a module over the cohomology of a point, which recall we denote $\M_2=H^{*,*}(pt;\underline{\Z/2})$. In this paper, we will be computing the cohomology of various spaces as $\M_2$-modules. Below we describe the cohomology of $pt=C_2/C_2$ as well as the cohomology of the free orbit $C_2$. These computations have been done many times and are often attributed to unpublished notes of Stong.  The computation for coefficients in any constant Mackey functor can be found in \cite{L}. A computation for constant integer coefficients can also be found in Appendix B of \cite{D1}, and the same methods used there can be used to compute the cohomology of orbits in constant $\Z/2$ coefficients. 

 In $\underline{\Z/2}$-coefficients, the cohomology of a point is illustrated in the left-hand grid shown in Figure \ref{fig:point}. The $(p,q)$ spot on the grid refers to the $\R^{p,q}$-cohomology group. Each dot represents a copy of $\Z/2$, and we adopt the convention that the $(p,q)$ group is plotted up and to right of the $(p,q)$ coordinate. For example, $H^{0,0}(pt;\underline{\Z/2})$ is isomorphic to $\Z/2$, while $H^{1,0}(pt;\underline{\Z/2})$ is zero. 

\begin{figure}[ht]
	\begin{tikzpicture}[scale=.45]
		\draw[help lines,light-gray] (-5.125,-5.125) grid (5.125, 5.125);
		\draw[<->] (-5,0)--(5,0)node[right]{$p$};
		\draw[<->] (0,-5)--(0,5)node[above]{$q$};
		\cone{0}{0}{black};
		\foreach \y in {0,1,2,3,4}
			\draw (0.5,\y+.5) node{\small{$\bullet$}};
		\foreach \y in {1,2,3,4}
			\draw (1.5,\y+.5) node{\small{$\bullet$}};
		\foreach \y in {2,3,4}
			\draw (2.5,\y+.5) node{\small{$\bullet$}};
		\foreach \y in {3,4}
			\draw (3.5,\y+.5) node{\small{$\bullet$}};
		\foreach \y in {4}
			\draw (4.5,\y+.5) node{\small{$\bullet$}};
		\foreach \y in {-1,-2,-3,-4}
			\draw (.5,\y-.5) node{\small{$\bullet$}};
		\foreach \y in {-2,-3,-4}
			\draw (-.5,\y-.5) node{\small{$\bullet$}};
		\foreach \y in {-3,-4}
			\draw (-1.5,\y-.5) node{\small{$\bullet$}};
		\foreach \y in {-4}
			\draw (-2.5,\y-.5) node{\small{$\bullet$}};
		\draw[thick](1.5,5)--(1.5,1.5)node[below, right]{$\rho$};
		\draw[thick](2.5,2.5)--(2.5,5);
		\draw[thick](3.5,3.5)--(3.5,5);
		\draw[thick](4.5,4.5)--(4.5,5);
		\draw[thick](.5,1.5)node[xshift=-2.2ex]{$\tau$}--(4,5);
		\draw[thick](.5,2.5)--(3,5);
		\draw[thick](.5,3.5)--(2,5);
		\draw[thick](.5,4.5)--(1,5);
		\draw[thick](-.5,-2.5)--(-.5,-5);
		\draw[thick](-1.5,-3.5)--(-1.5,-5);
		\draw[thick](-2.5,-4.5)--(-2.5,-5);
		\draw[thick](.5,-2.5)--(-2,-5);
		\draw[thick](.5,-3.5)--(-1,-5);
		\draw[thick](.5,-4.5)--(0,-5);
		\draw (1,-1.5) node{$\theta$};
		\draw (1,-2.5) node{$\frac{\theta}{\tau}$};
		\draw (1,-3.5) node{$\frac{\theta}{\tau^2}$};
		\draw (1,-4.5) node{$\frac{\theta}{\tau^3}$};
		\draw (-.7,-1.7) node{$\frac{\theta}{\rho}$};
		\draw (-1.7,-2.7) node{$\frac{\theta}{\rho^2}$};
		\draw (-2.7,-3.7) node{$\frac{\theta}{\rho^3}$};
	\end{tikzpicture}\hspace{0.5in}
	\begin{tikzpicture}[scale=.45]
		\draw[help lines,light-gray] (-5.125,-5.125) grid (5.125, 5.125);
		\draw[<->] (-5,0)--(5,0)node[right]{$p$};
		\draw[<->] (0,-5)--(0,5)node[above]{$q$};
		\cone{0}{0}{black};
	\end{tikzpicture}
	\caption{ The ring $\M_2=H^{*,*}(pt; \underline{\Z/2})$.}
	\label{fig:point}
\end{figure}

We will often refer to the portion of the cohomology in the first quadrant as the ``top cone" and refer to the other portion as the ``bottom cone". The top cone is polynomial in the elements $\rho$ and $\tau$, where $\rho$ is in bidegree $(1,1)$ and $\tau$ is in bidegree $(0,1)$. Multiplication by $\tau$ is indicated with vertical lines, and multiplication by $\rho$ is indicated with diagonal lines. For example, the nonzero element in $(1,4)$ is equal to $\rho\tau^3$ which is equal to $\tau^3\rho$. The bottom cone is slightly more complicated. The nonzero element $\theta$ in bidegree $(0,-2)$ is divisible by all nonzero elements in the top cone. Explicitly, this means for pairs $i\geq 0$, $j\geq 0$, there exists an element denoted by $\frac{\theta}{\rho^{i}\tau^{j}}$ that satisfies $\rho^i\tau^j\cdot\frac{\theta}{\rho^{i}\tau^{j}}=\theta$. Note $\rho$ and $\tau$ are not invertible elements in the ring; the notation $\frac{\theta}{\tau}$, $\frac{\theta}{\rho}$ is simply used to keep track of how $\rho$ and $\tau$ multiply with these elements. 

While doing computations, it is often easier to work with an abbreviated picture, which is given on the right-hand grid in the above figure. It is understood that there is a $\Z/2$ at each spot within the top cone and within the bottom cone with the relations described above. \medskip

We also include the cohomology of the free orbit $C_2$. As a ring, $H^{*,*}(C_2;\underline{\Z/2})$ is isomorphic to $\Z/2[u,u^{-1}]$ where $u$ is in bidegree $(0,1)$. As an $\M_2$-module, $H^{*,*}(C_2)$ is isomorphic to $\tau^{-1}\M_2/(\rho)$. See Figure \ref{fig:free} for the pictorial representation of this module and its abbreviated version. In these module pictures, action by $\tau$ is indicated by vertical lines, while action by $\rho$ is indicated by diagonal lines. 

\begin{figure}[ht]
	\begin{tikzpicture}[scale=.45]
		\draw[help lines,light-gray] (-5.125,-5.125) grid (5.125, 5.125);
		\draw[<->] (-5,0)--(5,0)node[right]{$p$};
		\draw[<->] (0,-5)--(0,5)node[above]{$q$};
		\anti{0}{0}{black};
		\foreach \y in {-5,...,4}
			\draw (0.5,\y+.5) node{\small{$\bullet$}};
	\end{tikzpicture}\hspace{0.5 in}
	\begin{tikzpicture}[scale=.45]
		\draw[help lines,light-gray] (-5.125,-5.125) grid (5.125, 5.125);
		\draw[<->] (-5,0)--(5,0)node[right]{$p$};
		\draw[<->] (0,-5)--(0,5)node[above]{$q$};
		\anti{0}{0}{black};
	\end{tikzpicture}
	\caption{The cohomology of $C_2$ as an $\M_2$-module.}
	\label{fig:free}
\end{figure}
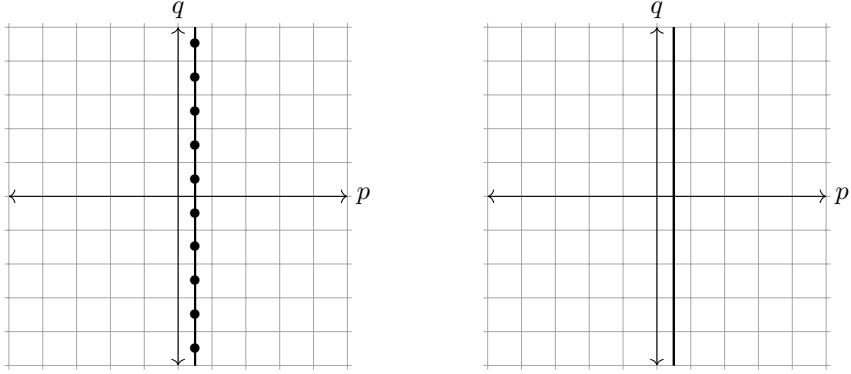 
%
%
\section{Some Computational Tools}\label{ch:comptools}
In this section, we introduce various computational tools that will be used throughout the paper. The first two lemmas relate the Bredon cohomology to the singular cohomology of the quotient space and of the underlying space. We next show how these lemmas can be used to compute the cohomology of the antipodal spheres. After this example, we introduce a lemma that relates the Bredon cohomology to the cohomology of the fixed set via localization. Finally, we end with a few general theorems about the cohomology of finite $C_2$-CW complexes and general $C_2$-manifolds in the discussed coefficient system.\medskip

The first lemma holds for any constant Mackey functor, and it will be extremely useful in starting computations. We state it below for constant $\Z/2$-coefficients.
\begin{lemma}\label{quotient}(The quotient lemma). Let $X$ be a finite $C_2$-CW complex. We have the following isomorphisms for all $p$:
\[ H^{p,0}(X;\underline{\Z/2}) \cong H^{p,0}(X/C_2;\underline{\Z/2})\cong H^{p}_{sing}(X/C_2;\Z/2).\]
\end{lemma}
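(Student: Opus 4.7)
The plan is to establish the second isomorphism first (which is essentially formal), and then prove the first by comparing cellular cochain complexes.

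For the second isomorphism, I would observe that $X/C_2$ carries the trivial $C_2$-action, so any CW structure on $X/C_2$ is a $C_2$-CW structure in which every cell has stabilizer $C_2$. The Bredon cochain complex computing $H^{*,0}(X/C_2;\underline{\Z/2})$ therefore has, in degree $p$, one copy of $\underline{\Z/2}(C_2/C_2)=\Z/2$ per $p$-cell, and the differential reduces to the ordinary cellular coboundary. Hence this chain complex is literally the singular cellular complex of $X/C_2$ with $\Z/2$ coefficients, giving $H^{p,0}(X/C_2;\underline{\Z/2})\cong H^p_{sing}(X/C_2;\Z/2)$.

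For the first isomorphism, I would fix a $C_2$-CW structure on $X$ and note that the quotient $X\to X/C_2$ descends to an ordinary CW structure on $X/C_2$ with exactly one $p$-cell per equivariant $p$-cell of $X$ (a free cell $C_2\times D^p$ and a fixed cell $pt\times D^p$ both descend to a single $p$-cell). The Bredon cochain complex of $X$ at weight zero with coefficients $\underline{\Z/2}$ has, in degree $p$,
\[
C^{p,0}(X;\underline{\Z/2})=\prod_{\text{fixed }p\text{-cells}}\underline{\Z/2}(C_2/C_2)\;\oplus\;\prod_{\text{free }p\text{-cells}}\underline{\Z/2}(C_2/e),
\]
and since $\underline{\Z/2}$ is the constant Mackey functor with value $\Z/2$ on every orbit, every cell contributes a single $\Z/2$. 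This matches the rank of the ordinary cellular cochain complex of $X/C_2$ with $\Z/2$ coefficients degree by degree, and the quotient map $\pi\colon X\to X/C_2$ (which is $C_2$-equivariant with trivial action on the target) induces a map $\pi^*$ between the two complexes.

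I expect the main obstacle to be verifying that $\pi^*$ is an isomorphism of cochain complexes, i.e.\ that the cellular differentials match. The only nontrivial check is at a free $(p+1)$-cell: its attaching map lands in the $p$-skeleton and, after quotienting, becomes the attaching map of the corresponding cell in $X/C_2$. For the constant Mackey functor the restriction map is the identity and the transfer is zero (because $2\equiv 0\pmod 2$), so the boundary contributions of free cells onto fixed cells and of fixed cells onto free cells reduce to mod-2 counts of the relevant intersections, which are exactly the mod-2 degree data recorded by the ordinary cellular boundary of $X/C_2$. Once this degree-wise comparison is in place, applying $\Hom(-,\Z/2)$ and passing to cohomology gives the first isomorphism. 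A cleaner alternative is to check the result on orbits ($X=C_2/C_2$ and $X=C_2$, both of which give $\Z/2$ concentrated in degree $(0,0)$), then induct up the cellular filtration via the long exact sequences of pairs, with naturality of $\pi^*$ providing the inductive step.
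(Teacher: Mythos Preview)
Your proposal is correct, and in fact your ``cleaner alternative'' at the end is precisely the paper's argument: the paper observes that $X\mapsto H^{*,0}(X;\underline{\Z/2})$ and $X\mapsto H^{*,0}(X/C_2;\underline{\Z/2})$ are both integer-graded cohomology theories on finite $C_2$-CW complexes, that the quotient map $\pi$ furnishes a natural transformation between them, and that $\pi^*$ is an isomorphism on the two orbits $C_2/C_2$ and $C_2$ (this is where the constantness of the Mackey functor enters). The usual comparison principle---equivalently, induction up the skeletal filtration via the five lemma, exactly as you sketch---then finishes the first isomorphism, and the second is immediate since $X/C_2$ has trivial action.

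Your primary approach, the explicit identification of Bredon cochain complexes, is a genuinely more concrete route. It also works, but as you anticipate the differential check is where all the content lives: one has to unwind how the restriction and transfer maps of $\underline{\Z/2}$ enter the Bredon coboundary when a free cell attaches along fixed cells and vice versa, and confirm these reduce to the mod-$2$ incidence numbers in $X/C_2$. The paper's route buys you exactly the ability to skip this bookkeeping: once you know $\pi^*$ is a map of cohomology theories, the orbit check is a one-line verification and the rest is formal. Your cellular approach, by contrast, has the virtue of being self-contained and of making visible \emph{why} the constant Mackey functor is the right hypothesis (each orbit contributes a single $\Z/2$ regardless of type).
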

\begin{proof} We have a quotient map $X\to X/C_2$ that induces a map on cohomology $H^{p,0}(X/C_2;\underline{\Z/2})\to H^{p,0}(X;\underline{\Z/2})$. Note both $H^{p,0}(X;\underline{\Z/2})$ and $H^{p,0}(X/C_2;\underline{\Z/2})$ are integer-graded cohomology theories. These two cohomology theories agree on both orbits $C_2$ and $C_2/C_2$ because the coefficients are given by a constant Mackey functor, and thus the first isomorphism follows for any finite $C_2$-CW complex. The second isomorphism follows because $X/C_2$ is a trivial $C_2$-space. \end{proof}

\begin{lemma}\label{rholocal}($\rho$-localization). Let $X$ be a finite $C_2$-CW complex. Then 
	\[
	\rho^{-1}H^{*,*}(X;\underline{\Z/2}) \cong \rho^{-1}H^{*,*}(X^{C_2};\underline{\Z/2}) \cong \rho^{-1}\M_2\otimes_{\Z/2} H^*_{sing}(X^{C_2};\Z/2).
	\]
\end{lemma}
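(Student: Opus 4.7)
The plan is to extract both isomorphisms from a single cofiber sequence, using the key observation (visible from Figure \ref{fig:free}) that $\rho$ acts as zero on $A_0 = H^{*,*}(C_2;\underline{\Z/2}) \cong \tau^{-1}\M_2/(\rho)$. Fix a $C_2$-CW structure on $X$ in which $X^{C_2}$ is a subcomplex; this is available because $X$ is a finite $C_2$-CW complex. The inclusion yields a cofiber sequence $X^{C_2}_+ \hookrightarrow X_+ \to X/X^{C_2}$ and hence, for each weight $q$, a long exact sequence
\[\cdots \to \H^{p,q}(X/X^{C_2}) \to H^{p,q}(X;\underline{\Z/2}) \to H^{p,q}(X^{C_2};\underline{\Z/2}) \to \H^{p+1,q}(X/X^{C_2}) \to \cdots.\]
Since localization is exact, the first claimed isomorphism will follow once I show $\rho^{-1}\H^{*,*}(X/X^{C_2};\underline{\Z/2}) = 0$.

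To prove this vanishing, I would induct on the number of non-basepoint cells of $X/X^{C_2}$, which are all free equivariant cells of the form $C_2 \times D^n$. Attaching such a cell to a subcomplex $Y' \subset Y$ produces a cofiber $Y/Y' \simeq \Sigma^{n,0}(C_2)_+$ with reduced cohomology $\Sigma^{n,0}A_0$. Because $\rho$ kills $A_0$, this cofiber contributes nothing after inverting $\rho$, and the associated long exact sequence completes the inductive step. The base case (just the basepoint) is trivial, so $\rho^{-1}\H^{*,*}(X/X^{C_2}) = 0$ and the first isomorphism follows.

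For the second isomorphism, $X^{C_2}$ carries the trivial action, and a parallel cellular induction -- now using trivial cells $S^{n-1} \hookrightarrow D^n$ and the suspension isomorphism $\H^{*,*}(S^n_{triv};\underline{\Z/2}) \cong \Sigma^{n,0}\M_2$ -- shows that $H^{*,*}(X^{C_2};\underline{\Z/2})$ is a free $\M_2$-module on generators in weight zero. Lemma \ref{quotient} then identifies those weight-zero generators with a basis of $H^*_{sing}(X^{C_2};\Z/2)$, yielding the decomposition $H^{*,*}(X^{C_2};\underline{\Z/2}) \cong \M_2 \otimes_{\Z/2} H^*_{sing}(X^{C_2};\Z/2)$ as $\M_2$-modules. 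Tensoring with $\rho^{-1}\M_2$ finishes the proof. The main obstacle is organizational rather than technical: one must produce a $C_2$-CW structure in which $X^{C_2}$ is a subcomplex so that $X/X^{C_2}$ is visibly built from free cells. Once that is done, the whole argument reduces to the single calculation that $\rho$ annihilates $A_0$.
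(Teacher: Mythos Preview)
Your argument is correct. The paper itself does not prove this lemma; it refers to \cite{M1} and remarks that the proof is ``similar to the proof of the quotient lemma.'' That hint points to a comparison-of-cohomology-theories argument: for each fixed weight, both $X \mapsto \rho^{-1}H^{*,q}(X)$ and $X \mapsto \rho^{-1}H^{*,q}(X^{C_2})$ are integer-graded cohomology theories on finite $C_2$-CW complexes (localization is exact), and they agree on orbits---on $C_2/C_2$ both give $\rho^{-1}\M_2$, and on $C_2$ both vanish since $\rho$ annihilates $A_0$ and $(C_2)^{C_2}=\emptyset$---hence agree everywhere. Your approach via the cofiber sequence $X^{C_2}_+ \hookrightarrow X_+ \to X/X^{C_2}$ and cell-by-cell induction on the free cells of $X/X^{C_2}$ is really the same idea unwound by hand: you are checking orbit-by-orbit that $\rho^{-1}$ kills everything built from $C_2$, rather than packaging that as ``agrees on orbits.'' The trade-off is that the comparison framing is slicker and avoids bookkeeping, while your version makes the single input $\rho\cdot A_0 = 0$ completely visible.

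For the second isomorphism you can skip your cellular induction entirely: the paper's Lemma~\ref{trivial} already gives $H^{*,*}(X^{C_2};\underline{\Z/2}) \cong \M_2 \otimes_{\Z/2} H^*_{sing}(X^{C_2};\Z/2)$ for any trivial finite $C_2$-CW complex, again by comparing cohomology theories. If you do want to run the induction, you should say why each step stays free---either because a nonzero differential between free rank-one pieces in weight zero is an isomorphism (after a change of basis), or because when $d=0$ the resulting extension splits since the quotient is free (or invoke self-injectivity of $\M_2$, Theorem~\ref{injective}).
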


The proof of this lemma is similar to the proof of the quotient lemma and can be found in \cite{M1}.

For the next lemma, consider the cofiber sequence
	\begin{equation} 
		S^{0,0}\hookrightarrow S^{1,1} \to C_{2+}\wedge S^{1,0}.
	\end{equation}
Smashing with any pointed $C_2$-space $X$, we obtain the cofiber sequence
	\begin{equation}\label{forget}
		S^{0,0}\wedge X \hookrightarrow S^{1,1} \wedge X \to C_{2+}\wedge S^{1,0} \wedge X. 
	\end{equation}
The long exact sequence induced by this cofiber sequence relates multiplication by the element $\rho$ to the singular cohomology of the space. Specifically, the long exact sequence is as stated in the following lemma. This statement can be found in \cite{K1} and is originally due to \cite{AM}.

\begin{lemma}\label{forgetfulles}(The forgetful long exact sequence). 
Let $X$ be a pointed $C_2$-space. For every integer $q$, we have a long exact sequence
\begin{center}
	\begin{tikzcd}
		\arrow[r] & \tilde{H}^{p-1,q}(X) \arrow[r,"\rho \cdot"]& \tilde{H}^{p,q+1}(X) \arrow[r,"\psi"] & \tilde{H}^{p}_{sing}(X) \arrow[r] & \tilde{H}^{p,q}(X) \arrow[r] & ~
	\end{tikzcd}
\end{center}
where the coefficients are understood to be $\underline{\Z/2}$. 
\end{lemma}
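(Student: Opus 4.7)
The plan is to apply Bredon cohomology to the cofiber sequence~\eqref{forget} and then reidentify the resulting terms. The induced Puppe sequence produces the long exact sequence
\[\cdots \to \tilde{H}^{p-1,q}(X) \to \tilde{H}^{p,q}(C_{2+}\wedge S^{1,0}\wedge X) \to \tilde{H}^{p,q}(S^{1,1}\wedge X) \to \tilde{H}^{p,q}(X) \to \cdots,\]
and the task reduces to identifying the two unfamiliar terms and interpreting the maps flanking the middle one.

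For the middle term I would apply the suspension isomorphism of Section~\ref{2.3}, giving $\tilde{H}^{p,q}(S^{1,1}\wedge X) \cong \tilde{H}^{p-1,q-1}(X)$. For the term involving $C_{2+}$ I would invoke the identification $\tilde{H}^{p,q}(C_{2+}\wedge Y;\underline{\Z/2}) \cong \tilde{H}^{p}_{sing}(|Y|;\Z/2)$, valid for any based $C_2$-space $Y$ with underlying based space $|Y|$. This follows either from a Wirthm\"uller change-of-groups argument, or more directly by observing that the diagonal $C_2$-action on $C_{2+}\wedge Y$ untwists to an action concentrated entirely on the $C_2$-factor, together with the fact that Bredon cohomology of a free $C_2$-space in a constant Mackey functor reduces to singular cohomology of the orbit space. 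Combined with an ordinary suspension for the $S^{1,0}$ factor, this yields $\tilde{H}^{p,q}(C_{2+}\wedge S^{1,0}\wedge X) \cong \tilde{H}^{p-1}_{sing}(X;\Z/2)$. Substituting these identifications into the long exact sequence and relabelling $q \mapsto q+1$ produces precisely the sequence stated in the lemma.

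The main obstacle is verifying that the indicated arrow is genuinely multiplication by $\rho$. The key input is that the inclusion $j\colon S^{0,0} \hookrightarrow S^{1,1}$ pulls back the fundamental class $\sigma \in \tilde{H}^{1,1}(S^{1,1};\underline{\Z/2})$ to $\rho \in \tilde{H}^{1,1}(S^{0,0};\underline{\Z/2})$; this is essentially the characterization of $\rho$ as the equivariant Euler class of the sign representation, and can in any case be checked directly by running the above long exact sequence on $X = S^{0,0}$ and examining the resulting map of $\Z/2$'s. Since the suspension isomorphism $\Sigma^{1,1}$ is given by external product with $\sigma$, naturality of the external product implies that the composite $(j\wedge \mathrm{id}_X)^*\circ \Sigma^{1,1}$ sends $y \in \tilde{H}^{p-1,q-1}(X)$ to $\rho\cdot y$. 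An analogous piece of bookkeeping identifies the map $\psi$ as the forgetful map to singular cohomology, completing the proof.
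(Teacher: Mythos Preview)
Your argument is correct and follows exactly the route the paper indicates: the paper sets up the cofiber sequence~\eqref{forget} immediately before the lemma and states that the long exact sequence it induces is the one in the statement, citing \cite{K1} and \cite{AM} for the details. You have supplied those details---the suspension identification of $\tilde H^{*,*}(S^{1,1}\wedge X)$, the identification of $\tilde H^{*,*}(C_{2+}\wedge S^{1,0}\wedge X)$ with singular cohomology (which is the based version of Lemma~\ref{times}), and the check that the relevant map is multiplication by $\rho$---and they are all sound.
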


We will refer to the map $\psi:\H^{p,q}(X) \to \H^{p}_{sing}(X)$ as the ``forgetful map". Note in $\M_2$, the element $\tau$ forgets to $1\in H^{0}_{sing}(pt)$, while $\rho$ forgets to zero. Indeed, by the exactness of the forgetful long exact sequence, for any $X$, a given cohomology class forgets to zero if and only if it is the image of $\rho$. 

\begin{example}\label{anticomp} 
Let's see how these tools can be used to compute the cohomology of a $C_2$-space. Note this computation is certainly not a new computation, but instead is done to review a standard fact, as well as to show the reader how we will use the computational tools discussed in this section. 

Let $S^n_a$ denote the equivariant $n$-sphere whose $C_2$-action is given by the antipodal map. We proceed by induction to show \[H^{*,*}(S^n_a;\underline{\Z/2}) \cong\tau^{-1}\M_2/(\rho^{n+1})\]
as an $\M_2$-module. The module $\tau^{-1}\M_2/(\rho^{n+1})$ can be represented pictorially as shown in Figure \ref{fig:anti}. 
\begin{figure}[ht]
	\begin{tikzpicture}[scale=.45]
		\draw[help lines,light-gray] (-1.125,-5.125) grid (7.125, 5.125);
		\draw[<->] (-1,0)--(7,0)node[right]{$p$};
		\draw[<->] (0,-5)--(0,5)node[above]{$q$};
		\foreach \x in {0,...,5}
			\draw[thick] (\x+1/2, -5)--(\x+1/2, 5);
		\draw[thick] (1/2,-4.5)--(5.5,.5);
		\draw[thick] (1/2,-3.5)--(5.5,1.5);
		\draw[thick] (1/2,-2.5)--(5.5,2.5);
		\draw[thick] (1/2,-1.5)--(5.5,3.5);
		\draw[thick] (1/2,-.5)--(5.5,4.5);
		\draw[thick] (1/2, .5)--(5,5);
		\draw[thick] (1/2, 1.5)--(4,5);
		\draw[thick] (1/2, 2.5)--(3,5);
		\draw[thick] (1/2, 3.5)--(2,5);
		\draw[thick] (1/2, 4.5)--(1,5);
		\draw[thick] (5.5, -.5)--(1,-5);
		\draw[thick] (5.5, -1.5)--(2,-5);
		\draw[thick] (5.5, -2.5)--(3,-5);
		\draw[thick] (5.5, -3.5)--(4,-5);
		\draw[thick] (5.5, -4.5)--(5,-5);
		\foreach \x in {0,...,5}
			{
			\foreach \y in {-5,-4,-3,-2,-1,0,1,2,3,4}
				\draw (\x+.5,\y+.5) node{\small{$\bullet$}};
			}
\end{tikzpicture}
	\caption{The $\M_2$-module $\tau^{-1}\M_2/(\rho^{n+1})$ when $n=5$.}
	\label{fig:anti}
\end{figure}
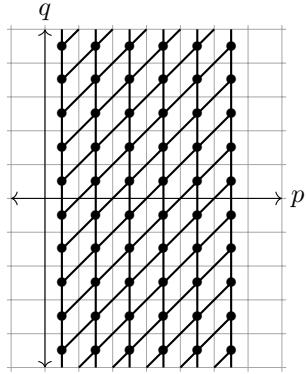

The base case in our inductive argument is given by $n=0$, where $S^0_a$ is understood to be the free orbit $C_2$. This case is done by the comments made in Section \ref{ch:preliminaries}. For the inductive hypothesis, let $n\geq 1$ and suppose $H^{*,*}(S^{n-1}_a)\cong \tau^{-1}\M_2/(\rho^{n})$. 

Our goal is to compute $H^{*,*}(S^n_a)$ using the inductive hypothesis, so we need a way to relate the cohomology of the $(n-1)$-dimensional antipodal sphere to the cohomology of the $n$-dimensional antipodal sphere. Consider the cofiber sequence
	\[
	S^{n-1}_{a+}\hookrightarrow S^{n}_{a+} \to S^{n,0}\wedge C_{2+}
	\] 
where $S^{n-1}_{a}$ includes as the equator of $S^{n}_a$. (Note the disjoint basepoint on the antipodal spheres is needed to run the Puppe sequence, as described in Section \ref{ch:preliminaries}.) The quotient space $S^n_a/S^{n-1}_a$ is nonequivariantly homeomorphic to $S^n \vee S^n$ and the inherited $C_2$-action swaps the two copies; this is exactly the $C_2$-space $S^{n,0}\wedge C_{2+}$. For every integer $q$, we have a long exact sequence given by
	\begin{align*}
 		\overset{d^{p-1,q}}{\to} \tilde{H}^{p,q}(S^{n,0}\wedge C_{2+})\to \H^{p,q}(S^{n}_{a+}) \to \H^{p,q}(S^{n-1}_{a+}) \overset{d^{p,q}}{\to} \tilde{H}^{p+1,q}(S^{n,0}\wedge C_{2+})\to
	\end{align*}
Note $\H^{*,*}(X_+)=H^{*,*}(X)$, so we can express this long exact sequence as
	\begin{align}\label{eq:exactness}
 		\overset{d^{p-1,q}}{\to} \tilde{H}^{p,q}(S^{n,0}\wedge C_{2+})\to H^{p,q}(S^{n}_{a}) \to H^{p,q}(S^{n-1}_{a}) \overset{d^{p,q}}{\to} \tilde{H}^{p+1,q}(S^{n,0}\wedge C_{2+})\to
	\end{align}

In order to find $H^{p,q}(S^n_a)$, we need to understand the differentials 
	\[
	d^{p,q}: H^{p,q}(S^{n-1}_{a})\to \tilde{H}^{p+1,q}(C_{2+}\wedge S^{n,0})
	\] 
for all $(p,q)$. It is helpful to consider all of these differentials at once. Let 
	\[
	d=\underset{p,q}{\oplus}d^{p,q}:H^{*,*}(S^{n-1}_a)\to H^{*+1,*}(S^{n,0}\wedge C_{2+})
	\] 
be the total differential. Note this differential is a module map, i.e. for every $r \in H^{*,*}(pt)$ and class $\alpha \in H^{*,*}(S^{n-1}_a)$, $d(r\alpha)=rd(\alpha)$. 

By the inductive hypothesis, we already understand the cohomology of the domain. Namely $H^{*,*}(S^{n-1}_a)\cong \tau^{-1}\M_2/(\rho^{n})$. Observe the codomain is the cohomology of the space $S^{n,0}\wedge C_{2+} =\Sigma^{n,0}C_{2+}$ which by the suspension isomorphism is given by
	\[
	\tilde{H}^{*,*}(\Sigma^{n,0}C_{2+})\cong \tilde{H}^{*-n,*}(C_{2+}) = \Sigma^{n,0}H^{*,*}(C_{2}).
	\]
Pictorially, the reduced cohomology of $C_{2+}\wedge S^{n,0}$ is just given by taking the cohomology of $C_2$ and shifting it to the right $n$ units.

It is helpful to illustrate $d$ via the following color-coded picture. 
\begin{figure}[ht]
	\begin{tikzpicture}[scale=.6]
		\draw[help lines,light-gray] (-1.125,-3.125) grid (7.125, 3.125);
		\draw[<->] (-1,0)--(7,0)node[right]{$p$};
		\draw[<->] (0,-3)--(0,3)node[above]{$q$};
		\foreach \x in {0,...,5}
			\draw[thick,red] (\x+1/2, -3)--(\x+1/2, 3);
		\draw[thick,red] (2,-3)--(5.5,.5);
		\draw[thick,red] (1,-3)--(5.5,1.5);
		\draw[thick, red] (1/2,-2.5)--(5.5,2.5);
		\draw[thick, red] (1/2,-1.5)--(5,3);
		\draw[thick, red] (1/2,-.5)--(4,3);
		\draw[thick, red] (1/2, .5)--(3,3);
		\draw[thick,red] (1/2, 1.5)--(2,3);
		\draw[thick,red] (1/2, 2.5)--(1,3);
		\draw[thick,red] (5.5, -.5)--(3,-3);
		\draw[thick,red] (5.5, -1.5)--(4,-3);
		\draw[thick,red] (5.5, -2.5)--(5,-3);
		\foreach \x in {0,...,5}
			{
			\foreach \y in {-3,-2,-1,0,1,2}
			\draw[red] (\x+.5,\y+.5) node{$\bullet$};
			};
		\foreach \y in {-3,-2,-1,0,1,2}
			\draw[blue] (6.5,\y+.5) node{$\bullet$};
		\draw[thick,blue] (6.5,-3)--(6.5,3);
		\foreach \y in {-3,-2,-1,0,1,2}
			\draw[thick, black,->] (5.55,\y+.5)--(6.45,\y+.5);
		\draw (6,.07)--(6,-.07) node[below]{$n$};
		\draw (6.5,3.5) node{$d^{n-1,q}$};
	\end{tikzpicture} 
	\caption{The differential $d:${\color{red}{ $H^{*,*}(S^{n-1}_{a} )$}} $\to$ {\color{blue}{$\tilde{H}^{*+1,*}(S^{n,0}\wedge C_{2+})$}}.}
	\label{fig:anti2}
\end{figure}
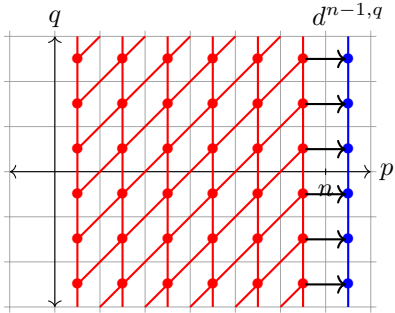
The only possible nonzero differentials occur in topological dimension $(n-1$). Since $d$ is a module map, it commutes with action of $\tau$. The element $\tau$ acts on both modules as an isomorphism, so it suffices to find $d^{n-1,q}$ for a single $q$. Let's consider $q=0$. By the quotient lemma given in Lemma \ref{quotient}, we have the following isomorphisms
	\[
	H^{n,0}(S^n_a) \cong H^n_{sing}(S^n_a/C_2) \cong H^n_{sing}(\R P^{n}) \cong \Z/2.
	\] 
By the exactness of the sequence in \ref{eq:exactness}, we have the short exact sequence
	\[
	0\to \coker(d^{n-1,0}) \to H^{n,0}(S^n_a)\to \ker(d^{n,0})\to 0. 
	\]
Now $H^{n,0}(S^{n-1}_a)=0$ so $\ker(d^{n,0})=0$, and it must be that $\coker(d^{n-1,0})\cong \Z/2$ and $d^{n-1,0}=0$. Thus the total differential $d$ is zero. 

We now need to solve the extension problem of $\M_2$-modules
	\[
	0 \to {\color{blue}{\coker(d)}} \to H^{*,*}(S^n_a) \to {\color{red}{\ker(d)}} \to 0.
	\]
The kernel and cokernel are illustrated below (they are just the domain and codomain of $d$, respectively). Note the extension could be trivial, i.e. $H^{*,*}(S^{n}_a)\cong \coker(d)\oplus \ker(d)$ as $\M_2$-modules, or there could be elements of $\ker(d)$ in topological dimension $(n-1)$ that lift to elements of $H^{n-1,0}(S^n_a)$ with a nontrivial $\rho$-action, as illustrated below. 

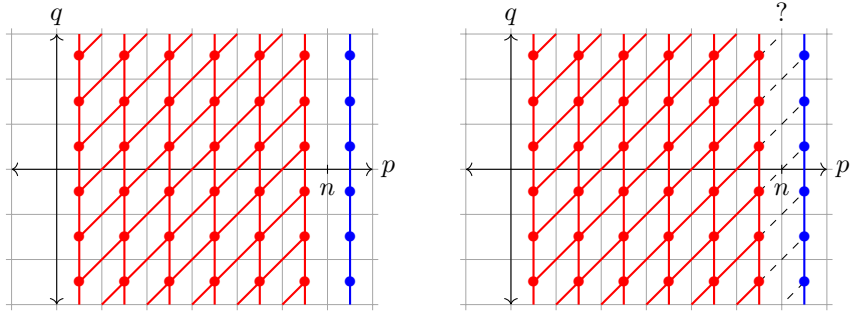
\begin{figure}[ht]
	\begin{tikzpicture}[scale=.6]
		\draw[help lines,light-gray] (-1.125,-3.125) grid (7.125, 3.125);
		\draw[<->] (-1,0)--(7,0)node[right]{$p$};
		\draw[<->] (0,-3)--(0,3)node[above]{$q$};
		\foreach \x in {0,...,5}
			\draw[thick,red] (\x+1/2, -3)--(\x+1/2, 3);
		\draw[thick,red] (2,-3)--(5.5,.5);
		\draw[thick,red] (1,-3)--(5.5,1.5);
		\draw[thick, red] (1/2,-2.5)--(5.5,2.5);
		\draw[thick, red] (1/2,-1.5)--(5,3);
		\draw[thick, red] (1/2,-.5)--(4,3);
		\draw[thick, red] (1/2, .5)--(3,3);
		\draw[thick,red] (1/2, 1.5)--(2,3);
		\draw[thick,red] (1/2, 2.5)--(1,3);
		\draw[thick,red] (5.5, -.5)--(3,-3);
		\draw[thick,red] (5.5, -1.5)--(4,-3);
		\draw[thick,red] (5.5, -2.5)--(5,-3);
		\foreach \x in {0,...,5}
			{
			\foreach \y in {-3,-2,-1,0,1,2}
				\draw[red] (\x+.5,\y+.5) node{$\bullet$};
			};
		\foreach \y in {-3,-2,-1,0,1,2}
			\draw[blue] (6.5,\y+.5) node{$\bullet$};
		\draw[thick,blue] (6.5,-3)--(6.5,3);
		\draw (6,.1)--(6,-.1) node[below]{$n$};
	\end{tikzpicture} \hspace{0.2 in}
	\begin{tikzpicture}[scale=.6]
		\draw[help lines,light-gray] (-1.125,-3.125) grid (7.125, 3.125);
		\draw[<->] (-1,0)--(7,0)node[right]{$p$};
		\draw[<->] (0,-3)--(0,3)node[above]{$q$};
		\foreach \y in {-3,-2,-1,0,1}
			\draw[dashed] (5.5, \y+0.5)--(6.5,\y+1.5);
		\draw[dashed] (5.5, 2.5)--(6,3);
		\draw[dashed] (6.5,-2.5)--(6,-3);
		\foreach \x in {0,...,5}
			\draw[thick,red] (\x+1/2, -3)--(\x+1/2, 3);
		\draw[thick,red] (2,-3)--(5.5,.5);
		\draw[thick,red] (1,-3)--(5.5,1.5);
		\draw[thick, red] (1/2,-2.5)--(5.5,2.5);
		\draw[thick, red] (1/2,-1.5)--(5,3);
		\draw[thick, red] (1/2,-.5)--(4,3);
		\draw[thick, red] (1/2, .5)--(3,3);
		\draw[thick,red] (1/2, 1.5)--(2,3);
		\draw[thick,red] (1/2, 2.5)--(1,3);
		\draw[thick,red] (5.5, -.5)--(3,-3);
		\draw[thick,red] (5.5, -1.5)--(4,-3);
		\draw[thick,red] (5.5, -2.5)--(5,-3);
		\foreach \x in {0,...,5}
			{
			\foreach \y in {-3,-2,-1,0,1,2}
				\draw[red] (\x+.5,\y+.5) node{$\bullet$};
			};
		\foreach \y in {-3,-2,-1,0,1,2}
			\draw[blue] (6.5,\y+.5) node{$\bullet$};
		\draw[thick,blue] (6.5,-3)--(6.5,3);
		\draw (6,.1)--(6,-.1) node[below]{$n$};
		\draw (6,3.5) node{?};
	\end{tikzpicture} 
	\caption{The extension problem.}
	\label{fig:anti3}
\end{figure}

We use a portion of the forgetful long exact sequence to see the extension is indeed nontrivial. Consider
\begin{center}
	\begin{tikzcd}[column sep=small]
		\tilde{H}^{n-1,q-1}(S^{n}_{a+}) \arrow[r,"\cdot\rho"] & \tilde{H}^{n,q}(S^{n}_{a+}) \arrow[r]& \tilde{H}^{n}_{sing}(S^n_{a+}) \arrow[r] & \tilde{H}^{n,q-1}(S^n_{a+}) \arrow[r,"\cdot\rho"] & \tilde{H}^{n+1,q}(S^n_{a+})
	\end{tikzcd}
\end{center}
Observe $ \tilde{H}^{n+1,q}(S^n_{a+}) = 0$ while $\tilde{H}^{n,q-1}(S^n_{a+})\cong \Z/2 \cong \tilde{H}^{n}_{sing}(S^n_{a+})$. Exactness then shows the multiplication by $\rho$ in the left map must be an isomorphism for all $q$, and we conclude the module structure is the one shown in Figure \ref{fig:anti}.
\end{example}

\begin{rmk} 
We will be making arguments such as the one in Example \ref{anticomp} throughout the paper. We will be less verbose and will use abbreviated pictures in future computations. For example, Figures \ref{fig:anti2} and \ref{fig:anti3} would be combined into the single abbreviated figure shown below. If the reader gets confused about the techniques in a future computation, we invite them to return to the above example as a kind of computational tutorial.
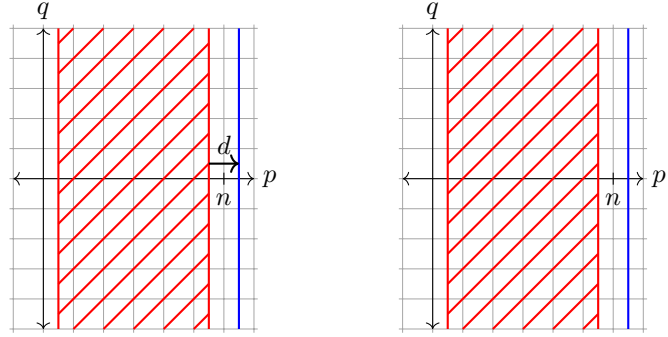
\begin{figure}[ht]
	\begin{tikzpicture}[scale=.4]
		\draw[help lines,light-gray] (-1.125,-5.125) grid (7.125, 5.125);
		\draw[<->] (-1,0)--(7,0)node[right]{$p$};
		\draw[<->] (0,-5)--(0,5)node[above]{$q$};
		\draw[thick,red] (.5,-5)--(.5,5);
		\draw[thick,red] (5.5,-5)--(5.5,5);
		\foreach \y in {-5,-4,-3,-2,-1}
			\draw[thick,red] (.5,\y+.5) --(5.5,\y+5.5);
		\draw[thick, red] (.5,.5)--(5,5);
		\draw[thick, red] (.5,1.5)--(4,5);
		\draw[thick, red] (.5,2.5)--(3,5);
		\draw[thick, red] (.5,3.5)--(2,5);
		\draw[thick, red] (.5,4.5)--(1,5);
		\draw[thick, red] (5.5,-.5)--(1,-5);
		\draw[thick, red] (5.5,-1.5)--(2,-5);
		\draw[thick, red] (5.5,-2.5)--(3,-5);
		\draw[thick, red] (5.5,-3.5)--(4,-5);
		\draw[thick, red] (5.5,-4.5)--(5,-5);
		\draw[thick,blue] (6.5,-5)--(6.5,5);
		\draw[thick, black,->] (5.5,.5)--node[above]{$d$}(6.5,.5);
		\draw (6,.2)--(6,-.2) node[below]{$n$};
	\end{tikzpicture} \hspace{0.5 in} 
	\begin{tikzpicture}[scale=.4]
		\draw[help lines,light-gray] (-1.125,-5.125) grid (7.125, 5.125);
		\draw[<->] (-1,0)--(7,0)node[right]{$p$};
		\draw[<->] (0,-5)--(0,5)node[above]{$q$};
		\draw[thick,red] (.5,-5)--(.5,5);
		\draw[thick,red] (5.5,-5)--(5.5,5);
		\foreach \y in {-5,-4,-3,-2,-1}
			\draw[thick,red] (.5,\y+.5) --(5.5,\y+5.5);
		\draw[thick, red] (.5,.5)--(5,5);
		\draw[thick, red] (.5,1.5)--(4,5);
		\draw[thick, red] (.5,2.5)--(3,5);
		\draw[thick, red] (.5,3.5)--(2,5);
		\draw[thick, red] (.5,4.5)--(1,5);
		\draw[thick, red] (5.5,-.5)--(1,-5);
		\draw[thick, red] (5.5,-1.5)--(2,-5);
		\draw[thick, red] (5.5,-2.5)--(3,-5);
		\draw[thick, red] (5.5,-3.5)--(4,-5);
		\draw[thick, red] (5.5,-4.5)--(5,-5);
		\draw[thick,blue] (6.5,-5)--(6.5,5);
		\draw (6,.2)--(6,-.2) node[below]{$n$};
	\end{tikzpicture}
	\caption{The abbreviated pictures for the differential and the extension problem in Example \ref{anticomp}.}
\end{figure}
\end{rmk}
During our computations, we will often encounter spaces of the form $Y\times C_2$ where $Y$ is some finite CW-complex. The cohomology of such a space depends entirely on the singular cohomology of $Y$, as shown in the lemma below. 
\begin{lemma}\label{times} 
Let $Y$ be a $C_2$-space. The cohomology of the free $C_2$-space $C_2\times Y$ is given by
	\[
	H^{*,*}(Y\times C_2; \underline{\Z/2})\cong \Z/2[\tau,\tau^{-1}]\otimes_{\Z/2} H^{*}_{sing}(Y;\Z/2)
	\]
as $\M_2$-modules. If $Y$ is a based $C_2$-space $Y$, we also have the module isomorphism
	\[
	\H^{*,*}(Y\wedge C_{2+};\underline{\Z/2}) \cong \Z/2[\tau,\tau^{-1}]\otimes_{\Z/2} \H^{*}_{sing}(Y;\Z/2).
	\]
\end{lemma}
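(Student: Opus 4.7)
The plan is to first reduce to the case where $Y$ carries the trivial $C_2$-action, and then deduce the formula by cellular induction while tracking the $\M_2$-module structure.

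For the reduction, I would exhibit an equivariant homeomorphism $\phi : Y \wedge C_{2+} \to |Y| \wedge C_{2+}$ via the formula $\phi(y,g) = (g^{-1}y, g)$, where $|Y|$ denotes $Y$ endowed with the trivial $C_2$-action and the target has $C_2$ acting only on the second factor. One verifies directly that $\phi$ ``untwists'' the diagonal action on the source to the action on the $C_{2+}$-factor on the target: $\phi(h \cdot (y,g)) = ((hg)^{-1}hy, hg) = (g^{-1}y, hg) = h \cdot \phi(y,g)$. The same formula handles the unbased statement, giving $Y \times C_2 \cong |Y| \times C_2$. Via this identification, we may henceforth assume $Y$ has trivial $C_2$-action.

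With $Y$ trivial, the plan is to induct on the cells of $Y$. The base case $Y = S^0$ gives $Y \wedge C_{2+} = C_{2+}$ with $\tilde{H}^{*,*}(C_{2+}) = \Z/2[\tau, \tau^{-1}]$, as recorded at the end of Section \ref{ch:preliminaries} (Figure \ref{fig:free}); this matches the claim since $\tilde{H}^*_{sing}(S^0;\Z/2)= \Z/2$ in degree zero. For the inductive step, if $Y' = Y \cup_f e^n$, smashing the trivial-action cofiber sequence $Y \to Y' \to S^n$ with $C_{2+}$ yields
\[
Y \wedge C_{2+} \longrightarrow Y' \wedge C_{2+} \longrightarrow \Sigma^{n,0} C_{2+}.
\]
By the inductive hypothesis and the suspension isomorphism applied to the base case, the outer two terms have the claimed form. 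The associated Bredon LES, compared bidegree-by-bidegree with the nonequivariant LES for $Y \to Y' \to S^n$ via the forgetful map $\psi$ of Lemma \ref{forgetfulles}, determines $\tilde{H}^{*,*}(Y' \wedge C_{2+})$ by a five-lemma argument, yielding the abelian-group isomorphism $\tilde{H}^{p,q}(Y' \wedge C_{2+}) \cong \tilde{H}^p_{sing}(Y';\Z/2)$ for every $q$.

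The main technical obstacle will be maintaining the correct $\M_2$-module structure through the induction; specifically, verifying that $\rho$ acts as zero at every stage, so that the module is genuinely $\Z/2[\tau,\tau^{-1}]$-periodic in the weight. Since $Y \wedge C_{2+}$ is a free $C_2$-space, Lemma \ref{rholocal} gives $\rho^{-1}\tilde{H}^{*,*}(Y \wedge C_{2+}) = 0$, so every class is $\rho$-nilpotent. To upgrade nilpotence to vanishing, I would use the forgetful LES: by exactness, $\rho = 0$ on $\tilde{H}^{*,*}(X)$ is equivalent to $\psi$ being injective in every bidegree, and this injectivity is preserved under the five-lemma comparison at each inductive step because it holds for both outer terms by the base case. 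Invertibility of $\tau$ then follows from the resulting short exact sequence $0 \to \tilde{H}^{p,q+1}(X) \xrightarrow{\psi} \tilde{H}^p_{sing}(X) \to \tilde{H}^{p,q}(X) \to 0$ together with the weight-independence already established. Finally, the unbased version of the lemma follows from the pointed version applied to $Y_+$, via the identification $(Y \times C_2)_+ \cong Y_+ \wedge C_{2+}$.
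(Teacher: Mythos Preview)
Your approach is correct but proceeds quite differently from the paper. The paper uses the Dold--Thom model $K(\underline{\Z/2},p,q)\simeq \Z/2\langle S^{p,q}\rangle$ and the elementary adjunction $[Y\times C_2,\,K]_{C_2}\cong [Y,K]_e$ to obtain $H^{p,q}(Y\times C_2)\cong H^p_{sing}(Y)$ in one stroke, for arbitrary $Y$; the module structure then falls out immediately because $\psi$ is visibly the diagonal. Your route---untwisting to trivial action, then cellular induction with the forgetful long exact sequence---avoids representing spaces entirely and is more elementary, at the cost of requiring a CW structure on $Y$ and a longer argument.

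One point to tighten: the comparison map between the Bredon long exact sequence for $Y\wedge C_{2+}\to Y'\wedge C_{2+}\to \Sigma^{n,0}C_{2+}$ and the singular long exact sequence for $Y\to Y'\to S^n$ is not $\psi$ itself (which lands in $\tilde H^*_{sing}(Y\wedge C_{2+})\cong \tilde H^*_{sing}(Y)^{\oplus 2}$) but rather $\psi$ followed by projection to one summand. Once you phrase it this way, the five lemma gives that this composite is an isomorphism, and then injectivity of $\psi$ (hence $\rho=0$) follows immediately since the kernel of $\psi$ is contained in the kernel of the composite. Your phrasing ``injectivity of $\psi$ is preserved under the five-lemma comparison because it holds for both outer terms'' is not quite the right justification: the four lemma for injectivity requires a surjectivity hypothesis on one flank that the diagonal map does not satisfy. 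The fix above sidesteps this cleanly.
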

\begin{proof}
From the results in \cite{dS}, a model representing Bredon cohomology in constant $\underline{\Z/2}$-coefficients is given by $K(\underline{\Z/2};p,q)\simeq \Z/2\langle S^{p,q} \rangle$ where $\Z/2\langle S^{p,q} \rangle$ has underlying space given by the usual Dold-Thom space of configurations of points in $S^p$ with labels in $\Z/2$, and has $C_2$-action given by the action on $S^{p,q}$. 

Any $C_2$-equivariant map $Y\times C_2\to \Z/2\langle S^{p,q} \rangle$ is entirely determined by the restriction to $Y\times \{1\}$ and thus
\begin{align*}
	H^{p,q}(Y\times C_2;\underline{\Z/2}) &\cong [Y\times C_2,\Z/2\langle S^{p,q} \rangle]_{C_2}\\
	&\cong[Y,\Z/2\langle S^p\rangle]_e\\
	&\cong H^{p}_{sing}(Y;\Z/2)
\end{align*}
where $[-,-]_{C_2}$ denotes the collection of $C_2$-equivariant maps up to $C_2$-equivariant homotopy, and $[-,-]_e$ denotes the collection of nonequivariant homotopy classes of maps. This establishes the isomorphism stated in the lemma as bigraded abelian groups. 

For the module structure, note the above shows the forgetful map $\psi:H^{p,q}(Y\times C_2)\to H^{p}_{sing}(Y\times C_2)$ is just the diagonal map, so in particular, nothing is in the kernel of $\psi$ and $\rho$ must act trivially. On the other hand $\psi(\tau)=1$, so $\tau$ must act as an isomorphism on $H^{*,*}(Y\times C_2)$. This shows 
	\[
	H^{*,*}(Y\times C_2; \underline{\Z/2})\cong \Z/2[\tau,\tau^{-1}]\otimes_{\Z/2} H^{*}_{sing}(Y;\Z/2).
	\]
The reduced statement is proven similarly. 
\end{proof}

If $X$ is a trivial $C_2$-space, then we can similarly state the Bredon cohomology of $X$ entirely in terms of the singular cohomology of $X$.
\begin{lemma}\label{trivial}
Let $X$ be a trivial finite $C_2$-CW complex. Then 
	\[
	H^{*,*}(X;\underline{\Z/2})\cong \M_2\otimes_{\Z/2} H^{*}_{sing}(X;\Z/2).
	\]
\end{lemma}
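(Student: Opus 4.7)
The plan is to construct a natural $\M_2$-module map
\[
\phi_X: \M_2 \otimes_{\Z/2} H^{*}_{sing}(X;\Z/2) \longrightarrow H^{*,*}(X;\underline{\Z/2})
\]
and prove it is an isomorphism by induction on the number of cells of $X$. To construct $\phi_X$, I would use the collapse $X \to pt$ to give $H^{*,*}(X; \underline{\Z/2})$ its $\M_2$-module structure, and then invoke the quotient lemma (Lemma \ref{quotient}) to identify $H^{p,0}(X;\underline{\Z/2})$ with $H^p_{sing}(X;\Z/2)$ (using that $X/C_2 = X$ when the action is trivial). Then I would define $\phi_X(\alpha \otimes \beta) := \alpha \cdot \tilde\beta$, where $\tilde\beta \in H^{*,0}(X;\underline{\Z/2})$ is the lift of $\beta$. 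This is manifestly an $\M_2$-module map, natural with respect to equivariant maps of trivial $C_2$-spaces.

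Next I would dispatch the base cases. For $X = pt$ the map $\phi_{pt}$ is the identity on $\M_2$. For $X = S^n$ with trivial action, note $S^n = S^{n,0}$ as a $C_2$-space, so the suspension isomorphism gives $\tilde{H}^{*,*}(S^{n,0}) \cong \Sigma^{n,0}\M_2$; meanwhile $\M_2 \otimes_{\Z/2} \tilde{H}^*_{sing}(S^n) \cong \Sigma^{n,0}\M_2$ as well, and a direct check confirms that $\phi$ identifies the generators.

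For the inductive step, let $X = A \cup_f e^n$ with $A$ a trivial $C_2$-CW complex with one fewer cell. The cofiber sequence $A_+ \hookrightarrow X_+ \to S^n_{triv}$ yields, via the Puppe sequence, a long exact sequence in $H^{*,q}(-;\underline{\Z/2})$ for each fixed weight $q$. Tensoring the classical singular long exact sequence with $\M_2^{*,q}$ over the field $\Z/2$ preserves exactness. The map $\phi$ fits into a resulting ladder of long exact sequences; by the inductive hypothesis $\phi_A$ is an isomorphism and the base case handles $\phi_{S^n_{triv}}$, so the five lemma gives that $\phi_X$ is an isomorphism.

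The main obstacle is the formal verification that $\phi$ commutes with the connecting homomorphisms of these long exact sequences. This reduces to showing that the quotient-lemma identification $H^{*,0}(-;\underline{\Z/2}) \cong H^*_{sing}(-;\Z/2)$ is a natural transformation of integer-graded cohomology theories on trivial $C_2$-spaces, which in turn follows from the naturality of the quotient map $X \to X/C_2$ and the fact that both sides agree on orbits. Once this compatibility is pinned down, everything else is routine homological algebra.
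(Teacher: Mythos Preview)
Your proof is correct and follows essentially the same route as the paper: both construct the map by identifying $H^{*,0}(X;\underline{\Z/2})$ with $H^*_{sing}(X;\Z/2)$ and extending $\M_2$-linearly, then verify it is an isomorphism by checking on a point and propagating via long exact sequences of cofiber sequences (the paper phrases this as ``a natural transformation of integer-graded cohomology theories that agrees on a point,'' which is exactly your cell-by-cell five-lemma argument packaged axiomatically). The only cosmetic difference is that the paper obtains the weight-zero identification directly from the Dold--Thom model $K(\underline{\Z/2};p,0)\simeq \Z/2\langle S^{p,0}\rangle$, whereas you invoke the already-established quotient lemma together with $X/C_2=X$.
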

\begin{proof}
We have a functor $\Psi: \Top \to C_2$-$\Top$ that takes a space $X$ and regards it as a trivial $C_2$-space. For each integer $q$ we can define two cohomology theories on $\Top$ by $(\M_2\otimes_{\Z/2}H^{*}_{sing}(X;\Z/2))^q$ and $H^{*,q}(\Psi(X);\underline{\Z/2})$. As explained in the previous proof,
	\[
	H^{p}_{sing}(X;\Z/2) = [X,\Z/2\langle S^{p} \rangle]_e
	\]
while 
	\[
	H^{p,0}(\Psi(X);\underline{\Z/2}) = [\Psi(X), \Z/2\langle S^{p,0}\rangle]_{C_2}.
	\]
Since both $\Psi(X)$ and $ \Z/2\langle S^{p,0}\rangle$ are trivial $C_2$-spaces, the above is exactly equal to $[X,\Z/2\langle S^{p} \rangle]_e$. Thus we have a clear map
	\[
	H^{*}_{sing}(X;\Z/2) = H^{*,0}(\Psi(X);\underline{\Z/2})\hookrightarrow H^{*,*}(\Psi(X);\underline{\Z/2})
	\]
which induces a map from the free module
	\[
	\M_2\otimes_{\Z/2}H^{*}_{sing}(X;\Z/2) \to H^{*,*}(\Psi(X);\underline{\Z/2}).
	\]
Restricting to the $q$-th grading, we have a map between cohomology theories
	\[ 
	(\M_2\otimes_{\Z/2}H^{*}_{sing}(X;\Z/2))^{q} \to H^{*,q}(\Psi(X);\underline{\Z/2}).
	\]
This map is an isomorphism when $X=pt$, and so these cohomology theories agree for finite CW-complexes for all values of $q$. This establishes the stated isomorphism as bigraded abelian groups, and the module structure follows from noting the map $\M_2\otimes_{\Z/2}H^{*}_{sing}(X;\Z/2) \to H^{*,*}(\Psi(X);\underline{\Z/2})$ is actually a module map.
\end{proof}

We next state an important theorem about the cohomology of $C_2$-manifolds. Here by ``$C_2$-manifold" we mean a piecewise linear manifold with a locally linear $C_2$-action (we want to ensure the fixed set is a disjoint union of submanifolds). By closed, we simply mean a closed manifold in the nonequivariant sense. The proof of the following theorem is somewhat tedious, so it has been been moved to Appendix \ref{ch:manfacts}.

\begin{theorem}\label{topm2} Let $X$ be an $n$-dimensional, closed $C_2$-manifold with a nonfree $C_2$-action. Suppose $n-k$ is the largest dimension of submanifold appearing as a component of the fixed set. Then there is exactly one summand of $H^{*,*}(X;\underline{\Z/2})$ of the form $\Sigma^{i,j}\M_2$ where $i\geq n$, and it occurs for $(i,j)=(n,k)$.
\end{theorem}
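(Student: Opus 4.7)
My plan is to combine the $\rho$-localization of Lemma \ref{rholocal} with the quotient lemma (Lemma \ref{quotient}) and the forgetful long exact sequence (Lemma \ref{forgetfulles}), applied to the $\M_2$-module decomposition
\[
H^{*,*}(X;\underline{\Z/2}) \cong \bigoplus_\alpha \Sigma^{i_\alpha, j_\alpha}\M_2 \oplus \bigoplus_\beta \Sigma^{i'_\beta, j'_\beta}A_{m_\beta}.
\]
The overall strategy is first to use $\rho$-localization to bound the ``slope'' $i-j$ of every free summand by $n-k$, and then to use vanishing of $H^{p,0}(X;\underline{\Z/2})$ in high topological dimensions together with the forgetful long exact sequence to pin down the particular shift $(n,k)$ with multiplicity one.

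For the first step, inverting $\rho$ annihilates every $A_m$-summand (since $\rho^{m+1}=0$ on $A_m$), while Lemma \ref{rholocal} combined with Lemma \ref{trivial} applied to the trivial $C_2$-space $X^{C_2}$ yields $\rho^{-1}H^{*,*}(X;\underline{\Z/2}) \cong \rho^{-1}\M_2 \otimes_{\Z/2} H^{*}_{sing}(X^{C_2};\Z/2)$. Since $\rho$ is now invertible and has bidegree $(1,1)$, the $\rho^{-1}\M_2$-module $\rho^{-1}\Sigma^{i,j}\M_2$ depends, up to isomorphism, only on the slope $s := i-j$; matching profiles in each bidegree forces the number of free summands of slope $s$ to equal $d_s := \dim_{\Z/2} H^{s}_{sing}(X^{C_2};\Z/2)$. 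Since no component of the fixed set exceeds dimension $n-k$, $d_s=0$ for $s>n-k$, and thus every free summand satisfies $i-j \leq n-k$; combined with $i \geq n$ this forces $j \geq k$ for any candidate summand.

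For the second step, a free summand $\Sigma^{i,j}\M_2$ with $j \geq 2$ places a nonzero bottom-cone class (the shifted $\theta \in \M_2^{0,-2}$) in $H^{i,j-2}(X;\underline{\Z/2})$, and running the forgetful long exact sequence propagates this class down the weight axis to force a nonzero class in $H^{i,0}(X;\underline{\Z/2}) \cong H^{i}_{sing}(X/C_2;\Z/2)$, which vanishes for $i>n$ by Lemma \ref{quotient}. The low-$j$ exceptions ($j\le 1$ with $i>n$) can only occur when $k$ is very small and are handled directly by the slope bound together with the forgetful long exact sequence. This rules out $i>n$. For $i=n$, existence of a summand at $(n,k)$ follows by realizing the $\underline{\Z/2}$-Poincar\'e dual of a top-dimensional fixed submanifold, which lies in $H^{n,k}(X;\underline{\Z/2})$ and has nontrivial image under the forgetful map $\psi$ in $H^{n}_{sing}(X;\Z/2) \cong \Z/2$; by exactness it is not in the image of $\rho$ and so generates a free summand $\Sigma^{n,k}\M_2$. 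Uniqueness --- that no other slope-$(n-k)$ summand has $j=k$ --- follows by matching $\dim_{\Z/2} H^{n,q}(X;\underline{\Z/2})$ in each weight $q$ between the decomposition and the direct computation via the forgetful long exact sequence and the quotient lemma. The main obstacle will be this final existence/uniqueness step, which is presumably what the paper defers to Appendix \ref{ch:manfacts}, and which I expect is carried out by an induction on a $C_2$-CW decomposition of $X$ using the attaching structure of its top cells.
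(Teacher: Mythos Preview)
Your proposal shares the key ingredients with the paper's proof --- $\rho$-localization for the slope bound, the quotient lemma, and the forgetful long exact sequence --- and your organization is reasonable. However, there is a genuine gap at the crucial step. You write that the class in $H^{n,k}(X)$ with nontrivial forgetful image ``is not in the image of $\rho$ and so generates a free summand $\Sigma^{n,k}\M_2$.'' This inference fails: an $A_0$-summand $\Sigma^{n,0}A_0$ would also place a class in $H^{n,k}(X)$ not in $\im(\rho)$, for every weight $k$. Your slope bound and the quotient lemma together do rule out $\Sigma^{n,0}A_m$ for $m\geq 1$ (since such a summand would contribute to $H^{n+m,0}(X)\cong H^{n+m}_{sing}(X/C_2)=0$), but nothing in your argument excludes $A_0$ at topological degree $n$. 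The paper handles exactly this point with a separate argument: one punctures $X$ at a free orbit $\{p,\sigma p\}$, compares the long exact sequence of the pair $(X, X\setminus\{p,\sigma p\})$ with its nonequivariant analogue, and derives a contradiction from the incompatibility of the diagonal $\Z/2\to\Z/2\oplus\Z/2$ (the forgetful map on $\Sigma^{n,0}A_0$) with the fold map $\Z/2\oplus\Z/2\to\Z/2$ (the restriction in singular cohomology). You have no substitute for this step.

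Two smaller issues: first, your appeal to the ``Poincar\'e dual of a top-dimensional fixed submanifold'' presupposes a theory of equivariant fundamental classes that is not available at this point in the paper; the paper instead constructs the needed class directly via the collapse map $q\colon X \to X/(X\setminus D) \cong S^{n,k}$ around a fixed point in a top-dimensional component and checks that $q^*$ is injective in bidegree $(n,k)$ using naturality of $\psi$. Second, your ``propagation down the weight axis'' in Step~2 is both vague and unnecessary: for $j\geq 2$ the summand $\Sigma^{i,j}\M_2$ already contributes the bottom-cone element $\theta/\tau^{j-2}$ directly to $H^{i,0}(X)$, so the contradiction with $H^i_{sing}(X/C_2)=0$ for $i>n$ is immediate. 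Your guess that the appendix proceeds by induction on a $C_2$-CW structure is also off; the argument is the collapse-map/punctured-space one sketched above.
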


\subsection{A Structure Theorem}
We conclude this section by recalling a fact about the coefficient ring $\M_2$ as well as a structure theorem for the cohomology of finite $C_2$-CW complexes. The two theorems below can be found in \cite{M1}.

\begin{theorem}[C. May]\label{injective} As a module over itself, $\M_2$ is injective.
\end{theorem}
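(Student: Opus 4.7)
The plan is to verify Baer's criterion in the graded setting: $\M_2$ is self-injective iff every graded $\M_2$-module homomorphism $f\colon I \to \M_2$ from a graded ideal $I \subset \M_2$ extends to a map $\M_2 \to \M_2$. Equivalently, $\mathrm{Ext}^1_{\M_2}(\M_2/I, \M_2) = 0$ for every graded ideal $I$.

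The first step is to exploit the structural decomposition $\M_2 = P \oplus N$ as $\Z/2$-vector spaces, where $P = \Z/2[\rho,\tau]$ is the top cone and $N$ is the bottom cone. Two structural facts are crucial. First, $N$ is a square-zero ideal of $\M_2$, so $\M_2$ is the trivial extension (Nagata idealization) $P \ltimes N$, and its ring structure is completely determined by the $P$-module structure on $N$. Second, comparing the basis $\{\theta/(\rho^a \tau^b)\}$ of $N$ with the dual of the monomial basis $\{\rho^a\tau^b\}$ of $P$, one sees that $N$ is (up to a bidegree shift by $(0,-2)$) the graded Matlis dual of $P$, and hence $N$ is the graded injective hull of $\Z/2$ as a $P$-module. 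Moreover, each bidegree of $\M_2$ is at most one-dimensional over $\Z/2$, so homogeneous elements are determined up to a scalar, making graded maps highly constrained. A useful preliminary observation is that the socle of $\M_2$ is $\Z/2 \cdot \theta$, and that every nonzero graded submodule of $\M_2$ contains $\theta$ (since any nonzero homogeneous element in either cone can be multiplied by an appropriate element to produce $\theta$); thus $\M_2$ is already an essential extension of its socle.

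The second step is to classify graded ideals and analyze the extension problem. Since each homogeneous element lies entirely in $P$ or entirely in $N$, every graded ideal splits as $I = I^{top} \oplus I^{bot}$, with $I^{top} = I \cap P$ a graded ideal of $P$ and $I^{bot} = I \cap N$ a graded $P$-submodule of $N$, subject to the compatibility $N \cdot I^{top} \subseteq I^{bot}$. Given $f\colon I \to \M_2$, decompose it by bidegree into a $P$-valued piece and an $N$-valued piece. The $N$-valued piece extends to a map $\M_2 \to N$ via the injectivity of $N$ as a $P$-module. For the $P$-valued piece, a direct extension would fail since $P$ is not self-injective; the resolution is that any such obstruction, upon multiplying by elements of $N$, must land in the bottom cone, and so can be absorbed by a compensating modification of the $N$-valued component. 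The compatibility $N \cdot I^{top} \subseteq I^{bot}$ guarantees this correction is consistent with the $\M_2$-module structure.

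The main obstacle is the careful bookkeeping required to show that every obstruction to extending the $P$-valued part of $f$ can be absorbed into the Matlis-dual piece $N$, and that this absorption can be performed coherently across all bidegrees. There is also the subtlety that $\M_2$ is not Noetherian, so Baer's criterion must be verified for non-finitely-generated ideals as well. Fortunately, since each bidegree of $\M_2$ is at most one-dimensional, the extension can be constructed one bidegree at a time, reducing the non-Noetherian verification to a bidegree-wise argument.
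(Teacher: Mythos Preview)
The paper does not supply its own proof of this theorem; it is quoted from \cite{M1} without argument. So there is nothing in the paper to compare your proposal against directly. That said, your outline is a reasonable strategy and the structural observations are correct: $\M_2$ is the trivial extension $P \ltimes N$ with $P=\Z/2[\rho,\tau]$ and $N$ the $(0,-2)$-shifted graded Matlis dual $E_P(\Z/2)$; $N$ is a square-zero ideal; each bidegree is at most one-dimensional; the socle is $\Z/2\cdot\theta$ and every nonzero graded ideal contains it.

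Where your proposal is genuinely incomplete is the ``absorption'' step, and your description of it is slightly off. Writing an $\M_2$-linear $f\colon I\to\M_2$ as $f=f_P+f_N$ with $f_P=\pi_P\circ f$ and $f_N=\pi_N\circ f$, one first checks (using that $N$ is a \emph{faithful} $P$-module) that $f_P$ vanishes on $I\cap N$ and hence factors through $\bar f_P\colon I^{top}\to P$. The real issue is then that an extension $\tilde f\colon\M_2\to\M_2$ is necessarily multiplication by some $m=m_P+m_N$, which forces $\bar f_P$ to literally be multiplication by $m_P\in P$; there is no room to ``absorb'' a failure of this into the $N$-component. What makes the argument go through is that $\M_2$-linearity of $f$ imposes the compatibility $f_N(nx)=n\,f_P(x)$ for $n\in N$, $x\in I^{top}$, and this constraint---via the annihilator structure of $N$ as the Matlis dual---rules out exactly those $P$-linear maps $I^{top}\to P$ that are not multiplication. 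For example, with $I^{top}=(\rho)$ the candidate $\rho\mapsto\tau$ is excluded because well-definedness on $I\cap N=N$ would require $\mathrm{Ann}_N(\rho)\subseteq\mathrm{Ann}_N(\tau)$, which fails. You should make this mechanism explicit rather than leaving it as a bookkeeping remark; once $m_P$ is pinned down, the existence of $m_N$ follows cleanly from the graded injectivity of $N$ over $P$, as you say.
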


The next theorem is a precise statement of the decomposition mentioned in the introduction. Note it is necessary that $X$ is a finite $C_2$-CW complex in the sense that it only contains finitely many cells. Any closed $C_2$-manifold can be given the structure of a $C_2$-CW complex, so in particular, this theorem applies to all closed $C_2$-manifolds.

\begin{theorem}[C. May]\label{structure} For any finite $C_2$-CW complex $X$, we can decompose the $RO(C_2)$-graded cohomology of $X$ with constant $\underline{\Z/2}$-coefficients as
	\[
	H^{*,*}(X;\underline{\Z/2}) = (\oplus_i\Sigma^{p_i,q_i}\M_2 ) \oplus (\oplus_j \Sigma^{p_j ,0} A_{n_j} )
	\] 
as a module over $\M_2 = H^{*,*}(pt;\underline{\Z/2})$ where $A_{n}$ denotes the cohomology of the $n$-sphere with the free antipodal action.
\end{theorem}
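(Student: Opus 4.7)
My plan is to induct on the number of cells in a $C_2$-CW decomposition of $X$. The two base cases are $X = pt$, whose cohomology is $\M_2$ itself, and $X = C_2$, whose cohomology is $A_0 = \tau^{-1}\M_2/(\rho)$ as recorded in Section \ref{ch:preliminaries}. For the inductive step, pick a top cell and write $X = X' \cup_f e$, where $e$ is either a trivial cell $D^m$ or a free cell $C_2 \times D^m$, so that the cofiber $X_+/X'_+$ is either $S^{m,0}$ or $\Sigma^{m,0} C_{2+}$. The induced long exact sequence in Bredon cohomology reads
\[
\cdots \to \tilde{H}^{p,q}(X_+/X'_+) \to H^{p,q}(X) \to H^{p,q}(X') \xrightarrow{\partial} \tilde{H}^{p+1,q}(X_+/X'_+) \to \cdots,
\]
and by a direct computation (or by Lemma \ref{times} in the free case) the relative term $\tilde{H}^{*,*}(X_+/X'_+)$ is either $\Sigma^{m,0}\M_2$ or $\Sigma^{m,0}A_0$, i.e.\ one of our allowed summand types.

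By the inductive hypothesis, $H^{*,*}(X')$ is a direct sum of shifts of $\M_2$ and shifts of $A_n$'s. Writing $M = \tilde{H}^{*,*}(X_+/X'_+)$, I would break the analysis into the standard short exact sequence of $\M_2$-modules
\[
0 \to \coker(\partial) \to H^{*,*}(X) \to \ker(\partial) \to 0
\]
and argue the two outer terms are already of the required form, and that the extension splits. The ingredients are: (i) the injectivity of $\M_2$ from Theorem \ref{injective}, which immediately splits off any $\M_2$ summand appearing on the left; (ii) a small computation of $\mathrm{Hom}_{\M_2}$ between shifts of $\M_2$ and $A_n$'s, to classify the possible module maps $\partial$ can restrict to on each summand of $H^{*,*}(X')$; and (iii) the observation that the image of a map out of $\Sigma^{p,q}\M_2$ or $\Sigma^{p,0}A_n$ into $\Sigma^{m,0}\M_2$ (or into $\Sigma^{m,0}A_0$) is itself isomorphic to a shift of $\M_2$ or to some $\Sigma^{p,0}A_j$, so that the cokernel and kernel remain sums of the allowed pieces. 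Crucially, the modules $A_n$ arise exactly in the mechanism by which attaching a free cell to an existing $\Sigma^{p,0}A_{n-1}$ via a nontrivial boundary yields a single $\Sigma^{p,0}A_n$ rather than two separate summands; this is the phenomenon that prevents a naive "split everything" argument.

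The main obstacle is precisely this extension problem: since the $A_n$ modules are not injective over $\M_2$, we cannot simply invoke Theorem \ref{injective} to split off the cokernel summand when the relative term is $\Sigma^{m,0}A_0$. I would handle this by classifying the possible extensions
\[
0 \to \Sigma^{a,0}A_j \to E \to \Sigma^{b,c}\M_2 \oplus \Sigma^{b',0}A_k \to 0
\]
computing the relevant $\mathrm{Ext}^1_{\M_2}$ groups, and showing every nonzero extension class is represented by a larger module of the allowed form (typically $\Sigma^{a,0}A_{j+1}$ absorbing a neighboring $A_0$, or a shift of $\M_2$ absorbing an $A_j$ into a shift of $\M_2$ via the boundary of the top or bottom cone). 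Once this algebraic classification is in hand, the induction closes and every finite $C_2$-CW complex's cohomology is a direct sum of the two basic pieces.
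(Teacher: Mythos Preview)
The paper does not prove this theorem; it is quoted from \cite{M1} and no argument is given here. So there is no ``paper's own proof'' to compare against---only May's original argument in \cite{M1}, which your outline does resemble in spirit (cell-by-cell induction, reduce to a short exact sequence, control kernels, cokernels, and extensions among the basic pieces).

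That said, your sketch glosses over the genuinely hard part. You assert that the image of a map from a shift of $\M_2$ or $A_n$ into $\Sigma^{m,0}\M_2$ or $\Sigma^{m,0}A_0$ is ``itself isomorphic to a shift of $\M_2$ or some $\Sigma^{p,0}A_j$,'' and that the resulting kernel and cokernel remain sums of allowed pieces. This is not automatic. For instance, the kernel of a nonzero map $\M_2 \to A_0$ is the ideal $(\rho)\subset\M_2$, which is \emph{not} a shift of $\M_2$ or of any $A_n$; it only becomes one after you resolve the subsequent extension with what remains of the cokernel. More importantly, your cell attachments only ever introduce relative terms of weight zero ($\Sigma^{m,0}\M_2$ or $\Sigma^{m,0}A_0$), yet the theorem allows summands $\Sigma^{p_i,q_i}\M_2$ with arbitrary $q_i$. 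These nonzero weights arise precisely from nontrivial extensions of the form
\[
0 \to \Sigma^{m,0}A_0 \to \Sigma^{p,q}\M_2 \to (\text{piece of a shifted }\M_2) \to 0,
\]
the so-called Kronholm shift. Your proposal mentions ``a shift of $\M_2$ absorbing an $A_j$'' in passing, but this is the crux of the whole classification and requires a careful computation of $\mathrm{Ext}^1_{\M_2}$ between all pairs of basic modules, together with an identification of the middle term in each nonsplit case. Without that, the induction does not close: you cannot conclude that $H^{*,*}(X)$ lands back in the subcategory generated by shifts of $\M_2$ and $A_n$. May's paper carries this out; your outline correctly identifies it as ``the main obstacle'' but does not actually do it.
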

%
%
\section{Classification of \texorpdfstring{$C_2$}{C2}-surfaces}\label{ch:class}
In \cite{D2}, all $C_2$-surfaces were classified up to equivariant isomorphism, and furthermore, a language was developed for describing the $C_2$-structure on a given equivariant surface. These descriptions are essential for many of the computations given in this paper. In this section, we review the needed terms and notations as well as some of the classification theorems. Notice all proofs are omitted in this section, and we direct the curious reader to \cite{D2}. 

\subsection{Free Actions} 
In order to state the classification of free $C_2$-surfaces, we need to review one construction from equivariant surgery. This definition was first given in the introduction, but we restate it here for reference.
\begin{definition}\label{consumdef} Let $X$ be a nontrivial $C_2$-surface and $Y$ be a nonequivariant surface. We can form the \textbf{equivariant connected sum} of $X$ and $Y$ as follows. Let $Y'$ denote the space obtained by removing a small disk from $Y$. Let $D$ be a disk in $X$ that is disjoint from its conjugate disk $\sigma D$ and let $X'$ denote the space obtained by removing both of these disks. Choose an isomorphism $f:\partial Y' \to \partial D$. Then the equivariant connected sum is given by
\[\left(Y' \times \{0\}\right) \sqcup \left(Y' \times \{1\} \right)\sqcup X']/\sim\]
where $(y,0) \sim f(y)$ and $(y,1) \sim \sigma(f(y))$ for $y\in \partial Y'$. We denote this space by $X \#_2 Y$.
\end{definition}

\begin{rmk}\label{specialconnsum} 
In our classifications, there are three important examples of equivariant connected sums that warrant their own notation. The first occurs when $Y$ is the projective plane. In this case, we refer to the surgery as ``adding dual cross caps" and write $X+[DCC]$ for $X\#_2\R P^2$. The phrase ``dual cross caps" arrises from the pictorial representation often used to denote such surgery; see Figure \ref{fig:dccex} below for an example. Note we can add more than one set of dual cross caps to form $X+[DCC]+[DCC]$ which we will denote $X+2[DCC]$. In general, $X+n[DCC]$ is the space obtained by adding $n$ dual cross caps to the $C_2$-surface $X$.

\begin{figure}[ht]
	\begin{tikzpicture}[scale=0.8]
		\draw[<->] (-.2,1.4)--(.2,1.4);
		\draw (0,0) ellipse (2cm and 1cm);
		\draw (-0.4,0) to[out=330,in=210] (.4,0) ;
		\draw (-.35,-.02) to[out=35,in=145] (.35,-.02);
		\draw[very thick,blue] (0,.55) ellipse (.1cm and 0.45cm);
		\draw[white,fill=white] (-.3,0.15) rectangle (0,.96);
		\draw[very thick,blue,dashed] (0,.55) ellipse (.1cm and 0.45cm);
		\draw[very thick,blue] (0,-.56) ellipse (.1cm and 0.43cm);
		\draw[white,fill=white] (-.3,-0.95) rectangle (0,-.14);
		\draw[very thick,blue,dashed] (0,-.56) ellipse (.1cm and 0.43cm);
		\draw (0.9,-.3) circle (0.2cm);
		\draw (1,-.2)--(.8,-.4);
		\draw (1,-.4)--(.8,-.2);
		\draw (-0.9,-.3) circle (0.2cm);
		\draw (-1,-.2)--(-.8,-.4);
		\draw (-1,-.4)--(-.8,-.2);
	\end{tikzpicture}
	\caption{Dual cross caps added to a genus one torus with a reflection action. The fixed set is shown in \textbf{{\color{blue}{blue}}}.}
	\label{fig:dccex}
\end{figure}
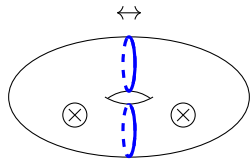

The other two examples occur when $X$ is one of $S^{2,2}$ or $S^{2,1}$, the two nonfree, nontrivial equivariant spheres. We refer to such spaces as \textbf{doubling spaces}, and denote the space $X\#_2 Y$ by Doub$(Y,1:S^{1,1})$ or Doub$(Y,1:S^{1,0})$, respectively. The phrase ``doubling" is used to acknowlege that, nonequivariantly, $X\#_2Y$ is homeomorphic to $Y\# Y$. 
\end{rmk}

The following lemma classifies all free actions on the genus one torus.

\begin{lemma}\label{freetorus} 
There are exactly two free actions on the genus one torus, up to equivariant isomorphism. The first is an orientation reversing action given by embedding the torus in $\R^3$ and restricting the antipodal action; we denote this space by $T_1^{anti}$. The second is an orientation preserving action given by rotating the torus $180^\circ$ around the center hole; we denote this space by $T_1^{rot}$.  
\end{lemma}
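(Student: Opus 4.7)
The plan is to classify free $C_2$-actions on $T_1$ by passing to the quotient and using the orientability dichotomy. Since the action is free, the projection $T_1 \to T_1/C_2$ is a genuine $2$-fold covering map, so multiplicativity of Euler characteristic gives $\chi(T_1/C_2) = \chi(T_1)/2 = 0$. By the classification of closed surfaces, $T_1/C_2$ is either the torus $T_1$ or the Klein bottle $K$, and I would use this as the primary split in the proof.

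Next I would leverage orientability to match each case with the correct named action. Since $T_1$ is orientable, an orientation on $T_1$ descends to $T_1/C_2$ if and only if the deck involution preserves orientation. Hence the subcase $T_1/C_2 \cong T_1$ is exactly the orientation-preserving case, and the subcase $T_1/C_2 \cong K$ is exactly the orientation-reversing case. A direct verification then confirms that the two advertised models fall into the correct subcases: the $180^\circ$ rotation around the central axis is clearly orientation-preserving and free, and its quotient is easily identified with $T_1$; restricting the antipodal map of $\R^3$ to a centrally-symmetric embedding of $T_1$ is orientation-reversing and free, with Klein bottle quotient.

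The main obstacle is uniqueness within each orientation class, which is where I expect the real work to lie. For this I would model $T_1$ as $\R^2/\Z^2$ and argue that any free $C_2$-involution is equivariantly isotopic to a linear involution of the form $(x,y) \mapsto (x+1/2, \pm y)$. The input here is the classical fact (a special case of Nielsen realization for surfaces) that every finite-order mapping class of $T^2$ contains a linear representative; freeness forces the translation part to be a half-period, and rescaling by $GL_2(\Z)$ brings any such involution to the normal forms above. The orientation-preserving normal form $(x,y) \mapsto (x+1/2, y)$ is equivariantly isomorphic to the rotation model $T_1^{rot}$, while the orientation-reversing normal form $(x,y) \mapsto (x+1/2, -y)$ is equivariantly isomorphic to $T_1^{anti}$.

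Alternatively, since the statement is essentially a special case of the general classification in \cite{D2}, one could streamline the argument by citing that result after the Euler characteristic and orientation reduction above, and use the explicit normal forms only as geometric models to identify with the named actions $T_1^{rot}$ and $T_1^{anti}$.
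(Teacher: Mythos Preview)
The paper does not prove this lemma at all: Section~\ref{ch:class} opens by stating that all proofs are omitted and the reader is directed to \cite{D2}. So your final paragraph---simply citing \cite{D2}---is exactly what the paper does, and that alone would suffice.

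Your independent argument is nonetheless correct in outline and is the standard way to establish this result directly. The Euler characteristic computation pins the quotient down to $T_1$ or the Klein bottle, the orientability dichotomy matches these to the orientation-preserving and orientation-reversing cases, and the reduction to affine models $(x,y)\mapsto(x+\tfrac{1}{2},\pm y)$ via Nielsen realization on the torus handles uniqueness. One minor point: the step ``rescaling by $GL_2(\Z)$ brings any such involution to the normal forms above'' deserves a line of justification---you need to check that the linear part of a free affine involution of $\R^2/\Z^2$ is $GL_2(\Z)$-conjugate to either $I$ or $\begin{pmatrix}1&0\\0&-1\end{pmatrix}$, and that the translation part can be normalized to $(\tfrac{1}{2},0)$ while preserving freeness. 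This is routine but not entirely automatic.
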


Recall from Section \ref{ch:comptools},  we write $S^n_a$ for the $n$-dimensional antipodal sphere. We are now ready to state the main classification theorem for free $C_2$-surfaces.  All statements in the theorem should be followed with ``up to equivariant isomorphism". Note by genus of a nonorientable space $X$ we simply mean the $\beta$-genus of $X$ which was defined in the introduction to be the dimension of $H^1_{sing}(X;\Z/2)$. We denote such a space by $N_s$ where $s$ is the genus.

\begin{theorem}\label{freeclass}(Classification of free actions).

\begin{enumerate}
	\item[(i)] There is exactly one free structure on the even genus torus $T_{2k}$ which is given by $S^2_a\#_2 T_k$.
	\item[(ii)] There are exactly two free structures on the odd genus torus $T_{2k+1}$ which are given by $T_1^{anti}\#_2 T_{k}$ and $T_1^{rot}\#_2 T_k$.
	\item[(iii)] There are no free structures on the odd genus non-orientable space $N_{2k+1}$.
	\item[(iv)] There is exactly one free structure on the genus two non-orientable space $N_2$ which is given by $S^2_a+[DCC]$.
	\item[(v)] There are exactly two free structures on the even genus non-orientable space $N_{2k}$ when $k\geq 2$, which are given by $S^2_a+k[DCC]$ and $T^{anti}_1 + (k-1)[DCC]$.
\end{enumerate}
\end{theorem}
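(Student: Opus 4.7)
The plan is to treat this as a covering space problem. A free $C_2$-surface $X$ is the same data as a pair $(Y,p:X\to Y)$ where $Y$ is a closed surface and $p$ is a connected double cover. Such covers are classified by nonzero classes $\alpha\in H^1(Y;\Z/2)$, and two free actions on $X$ are equivariantly isomorphic iff their classifying classes lie in the same orbit of $\mathrm{MCG}(Y)$ acting on $H^1(Y;\Z/2)$. So the proof reduces to: (a) determining the possible quotients $Y$, (b) counting orbits of $\mathrm{MCG}(Y)$ on the relevant subset of $H^1(Y;\Z/2)$, and (c) verifying that the explicit surgery constructions realize each orbit.

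First I would use the identity $\chi(X)=2\chi(Y)$ to list the candidate quotients for each $X$. This already settles (iii): $\chi(N_{2k+1})=1-2k$ is odd, so $N_{2k+1}$ cannot be a nontrivial double cover. Next I would record the orientability constraints: every cover of an orientable surface is orientable, whereas for $Y=N_s$ the cover $\tilde Y_\alpha$ is orientable iff $\alpha$ equals the orientation class $w_1(N_s)$. Combined with the Euler characteristic list, this pins down the quotient in each remaining case: for $T_{2k}$ the quotient must be $N_{2k+1}$ with $\alpha=w_1$; for $T_{2k+1}$ the quotient is $T_{k+1}$ or $N_{2k+2}$ (in the latter case $\alpha=w_1$); for $N_{2k}$ the quotient must be $N_{k+1}$ with $\alpha\neq w_1$.

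Then I would count orbits of $\mathrm{MCG}(Y)$ on the relevant classes. For $Y=T_h$ orientable, $\mathrm{MCG}(T_h)$ acts on $H^1(T_h;\Z/2)\cong(\Z/2)^{2h}$ through the symplectic group $\mathrm{Sp}(2h,\Z/2)$, which is transitive on nonzero vectors, giving a single orbit. For $Y=N_s$ non-orientable I would use that $\mathrm{MCG}(N_s)$ surjects onto the subgroup of $\mathrm{GL}(H^1(N_s;\Z/2))$ preserving both the intersection form and $w_1$, and then invoke a Witt-style argument using the quadratic refinement $q(\alpha)=\alpha\smile\alpha\in H^2(N_s;\Z/2)=\Z/2$ as an invariant. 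The orientation class $w_1$ itself is one orbit; among $\alpha\neq 0,w_1$, the argument should give a single orbit for $s=2$ and exactly two orbits (distinguished by $q$) for $s\geq 3$. This matches (iv) and (v) respectively.

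Finally I would verify that each listed expression realizes the stated cover. The key identity is $(X\#_2 Y)/C_2\cong(X/C_2)\# Y$, together with $T_1^{anti}/C_2\cong K=N_2$, $T_1^{rot}/C_2\cong T_1$, and $S^2_a/C_2\cong\R P^2$. A short calculation then shows, for example, that $S^2_a\#_2 T_k$ has quotient $N_{2k+1}$ and underlying surface $T_{2k}$, while the two options in (ii) give quotients $T_{k+1}$ and $N_{2k+2}$, realizing the two orbits. For (v) one must additionally check that $S^2_a+k[DCC]$ and $T_1^{anti}+(k-1)[DCC]$ hit the two distinct non-$w_1$ orbits in $H^1(N_{k+1};\Z/2)$; this is the step I expect to be most delicate, since it requires computing $q(\alpha)$ for the surgery-defined covers (or distinguishing them by whether the covering involution has a regular neighborhood of certain curves that is an annulus versus a Möbius band). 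The orbit analysis for $\mathrm{MCG}(N_s)$ on $H^1(N_s;\Z/2)$ is the main obstacle; once it is in hand, the rest is Euler-characteristic bookkeeping and verification of the explicit constructions.
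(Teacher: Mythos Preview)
The paper does not prove this theorem at all: Section~\ref{ch:class} explicitly states that all proofs are omitted and refers the reader to \cite{D2}, where the classification is obtained via equivariant surgery. So your proposal is not being compared against a proof in this paper but against an external reference with a different methodology.

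Your covering-space approach is the classical one and is sound in outline. Translating free $C_2$-structures on $X$ into pairs $(Y,\alpha)$ with $\alpha\in H^1(Y;\Z/2)\setminus\{0\}$ modulo $\mathrm{MCG}(Y)$ is correct, and your Euler-characteristic and orientability bookkeeping is accurate, including the identification of quotients for each of the listed constructions. The orbit count you sketch is also right: $\mathrm{Sp}(2h,\Z/2)$ is transitive on nonzero vectors, and for $N_s$ the $\mathrm{MCG}$-orbits on $H^1(N_s;\Z/2)\setminus\{0,w_1\}$ are distinguished by $q(\alpha)=\alpha\smile\alpha$, giving one orbit for $s=2$ and two for $s\ge 3$. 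The point you flag as delicate---showing that $S^2_a+k[DCC]$ and $T_1^{anti}+(k-1)[DCC]$ land in distinct $q$-classes---is genuinely the crux and deserves an explicit computation; one clean way is to track the classifying class under each $[DCC]$ attachment and evaluate the cup square in the standard crosscap basis of $H^1(N_{k+1};\Z/2)$.

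By contrast, \cite{D2} proceeds constructively: it builds every free $C_2$-surface by equivariant surgery from $S^2_a$, $T_1^{anti}$, or $T_1^{rot}$ and then separates isomorphism types using surgery-theoretic invariants rather than $\mathrm{MCG}$-orbit counts. Your route is more algebraic and arguably more direct for the enumeration, but it imports the nontrivial fact about $\mathrm{MCG}(N_s)$ acting on $H^1$; the surgery approach avoids that input at the cost of a longer case analysis and yields the explicit normal forms used later in this paper.
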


The above completely classifies free $C_2$-surfaces. We now state various classification theorems for the nonfree $C_2$-surfaces.

\subsection{Nonfree Actions} 
For the classification of nonfree $C_2$-surfaces, we need to introduce three other types of equivariant surgery. The first two involve removing conjugate disks in order to attach an equivariant handle. There are two types of handles that can be attached. The first handle is given by $S^{1,1}\times D(\R^{1,1})$ where $D(\R^{1,1})$ is the unit disk in $\R^{1,1}$; we will refer to such a handle as an ``$S^{1,1}-$antitube". The second type of handle is given by $S^{1,0}\times D(\R^{1,1})$; we refer to this handle as an ``$S^{1,0}-$antitube". We give the precise definitions below.

\begin{definition} 
Let $X$ be a nontrivial $C_2$-surface. Form a new space denoted $X+[S^{1,0}-AT]$ as follows. Let $D$ be a disk contained in $X$ that is disjoint from its conjugate disk $\sigma D$. Remove both disks from $X$ and then attach an $S^{1,0}-$antitube. Whenever we construct such a space, we say we have done \textbf{$\mathbf{S^{1,0}-}$surgery}.
\end{definition}

We can similarly define \textbf{$\mathbf{S^{1,1}-}$surgery} by instead attaching an $S^{1,1}-$antitube. 

The third type of surgery involves removing a disk isomorphic to the unit disk in $\R^{2,2}$ and sewing in an equivariant M\"obius band. This equivariant M\"obius band can be formed as follows. Begin with the nonequivariant M\"obius bundle over $S^1$, and then define an action on the fibers by reflection. In other words, each fiber should be isomorphic to $\R^{1,1}$ (in particular, the zero section is fixed). If we now take the closed unit disk bundle, note the boundary is a copy of $S^1_a$, as is the boundary of the removed disk $D(\R^{2,2})$. An illustration of this M\"obius bundle is shown below. Conjugate points are indicated by matching symbols, while the fixed set is shown in \textbf{{\color{blue}{blue}}}. 
\begin{figure}[ht]
	\includegraphics[scale=0.6]{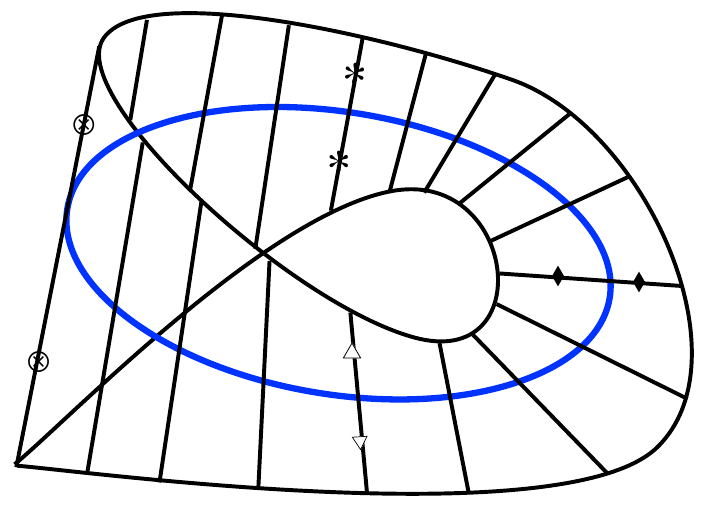}
\end{figure}

\begin{definition} 
Let $X$ be a nontrivial $C_2$-surface that contains an isolated fixed point $p$. Then there exists an open disk $p\in D\subset X$ such that $D\cong D(\R^{2,2})$. Remove $D$ and note $\partial D \cong S^{1}_a$. Now sew in a copy of the M\"obius band described above. We denote this new space by $X+[FM]$ and refer to this surgery as \textbf{$\mathbf{FM-}$surgery}. (Note ``FM" is an abbreviation for ``fixed point to M\"obius band".)
\end{definition}

The complete list of nonfree, nontrivial $C_2$-surfaces is given in \cite{D2}, but it turns out we only need the following takeaway from this list in order to do computations in $\underline{\Z/2}$-coefficients.

\begin{theorem}\label{nonfreeclass} 
Let $X$ be a nonfree, nontrivial surface. If $X$ is not isomorphic to a doubling space or to an equivariant sphere, then $X$ can be obtained by doing $S^{1,0}-$, $S^{1,1}-$, or $FM-$ surgery to an equivariant space of lower $\beta$-genus. 
\end{theorem}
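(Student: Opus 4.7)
The plan is to deduce this statement from the full classification of nonfree nontrivial $C_2$-surfaces in \cite{D2}, which is organized precisely in terms of the three surgeries in question.

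First I would record the effect of each surgery on the $\beta$-genus. Both $S^{1,0}$- and $S^{1,1}$-surgery remove two conjugate free disks and glue in an equivariant handle that is nonequivariantly the standard genus-one handle, so each increases $\beta$ by $2$. The $FM$-surgery replaces a rotation disk around an isolated fixed point by an equivariant M\"obius band, which nonequivariantly amounts to adding a cross-cap, so it increases $\beta$ by $1$. In particular, all three surgeries strictly increase $\beta$; hence it suffices, for each $X$ satisfying the hypothesis, to exhibit any $C_2$-surface $Y$ together with one of the three surgeries that produces $X$ from $Y$, since then automatically $\beta(Y) < \beta(X)$.

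Next I would consult the classification in \cite{D2}, where nonfree nontrivial $C_2$-surfaces are enumerated inductively. The base cases of that inductive enumeration are exactly the two nontrivial nonfree equivariant spheres $S^{2,1}$, $S^{2,2}$ together with the doubling spaces $\operatorname{Doub}(Y,1\colon S^{1,0})$ and $\operatorname{Doub}(Y,1\colon S^{1,1})$; every remaining family in the list is described directly as an earlier case with one of the three surgeries applied. Reading the construction of each non-base case backward produces the required $Y$.

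The main obstacle is bookkeeping: one must walk through the construction families in \cite{D2} and confirm that each admits such a ``last surgery step.'' Concretely, one checks family by family that the local model created by each surgery --- a cylinder with a fixed core circle for $S^{1,0}$-surgery, a cylinder containing a pair of isolated fixed points for $S^{1,1}$-surgery, or a M\"obius neighborhood of a fixed circle for $FM$-surgery --- can be identified inside $X$, excised, and capped off to recover a well-defined equivariant surface $Y$. The classification in \cite{D2} supplies this essentially by construction, so once the case analysis is carried out, the theorem follows.
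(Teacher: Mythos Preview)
Your proposal is correct and matches the paper's treatment: in Section~\ref{ch:class} the paper explicitly omits all proofs and attributes this statement to the classification in \cite{D2}, so deducing it by reading off the inductive structure of that classification is precisely what is intended. Your remarks on how each surgery affects the $\beta$-genus and on identifying the ``last surgery step'' in each non-base family are the right bookkeeping, though the paper itself does not spell any of this out.
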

%
%
\section{Computation for Free Actions}\label{ch:freecomp}
In this section, we compute the cohomology of all free $C_2$-surfaces in $\underline{\Z/2}$-coefficients. We first compute the cohomology of the two free tori $T_1^{anti}$ and $T_{1}^{rot}$. We then utilize the decompositions given in Theorem \ref{freeclass} together with these initial computations to compute the cohomology of all free $C_2$-surfaces.

\begin{notation} 
In this section, all coefficients will be understood to be $\underline{\Z/2}$.
\end{notation}

\begin{prop}\label{freetoriprop} 
We have the following isomorphisms of $\M_2$-modules 
	\[
	H^{*,*}(T_1^{anti}) \cong H^{*,*}(S^1_a) \oplus\Sigma^{1,0} H^{*,*}(S^1_a) \cong H^{*,*}(T_1^{rot}).
	\]
\end{prop}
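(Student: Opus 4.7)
Plan: Both $T_1^{anti}$ and $T_1^{rot}$ have free $C_2$-actions, so $(T_1^{\ast})^{C_2} = \emptyset$. By the $\rho$-localization lemma (Lemma \ref{rholocal}), $\rho^{-1} H^{*,*}(T_1^{\ast}; \underline{\Z/2}) = 0$, which forces the structure-theorem decomposition (Theorem \ref{structure}) to contain no shifted copies of $\M_2$: such a summand would survive $\rho$-localization since $\rho$ is a non-zero-divisor on $\M_2$. Hence $H^{*,*}(T_1^{\ast}) = \bigoplus_j \Sigma^{p_j, 0} A_{n_j}$ for each torus, and the remaining task is to pin down the shifts.

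I count dimensions via the quotient lemma: $H^{p,0}(T_1^{\ast}) \cong H^p_{sing}(T_1^{\ast}/C_2; \Z/2)$. The quotient $T_1^{rot}/C_2$ is $T^2$ (the $180^\circ$ rotation is orientation-preserving), and $T_1^{anti}/C_2$ is the Klein bottle $K$ (the antipodal action reverses orientation in $\R^3$), both having mod-$2$ Betti numbers $(1,2,1)$. Since $\tau$ acts as an isomorphism on each $A_n$-summand, $\dim H^{p,q}(T_1^{\ast})$ is independent of $q$ and equals $(1,2,1,0,\ldots)$. This is the bigraded rank shape of $A_1 \oplus \Sigma^{1,0}A_1$, but also of competitors such as $A_2 \oplus \Sigma^{1,0}A_0$, so I need to pin down the $\rho$-action.

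By exactness of the forgetful long exact sequence (Lemma \ref{forgetfulles}), the image of $\rho : H^{p-1,*}(X) \to H^{p,*}(X)$ equals the kernel of the forgetful map $\psi : H^{p,*}(X) \to H^p_{sing}(X)$. For a free $C_2$-space $X$ with quotient $Y$, the map $\psi$ at weight zero coincides with the pullback $\pi^{\ast}$ along the covering projection $\pi : X \to Y$, so $\rho$-multiplication on $H^{*,0}(X) \cong H^{*}(Y;\Z/2)$ is cup product by the class $w \in H^1(Y;\Z/2)$ classifying the double cover. For $T_1^{rot} \to T^2$ this class is $w = e_1$, and cupping with $e_1$ in $H^*(T^2;\Z/2) = \Z/2[e_1,e_2]/(e_1^2,e_2^2)$ organizes the basis into two $\rho$-strings of length two, $\{1,e_1\}$ and $\{e_2, e_1 e_2\}$. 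For $T_1^{anti} \to K$ the cover is the orientation double cover, so $w = w_1(K) = x_1 + x_2$ in $H^*(K;\Z/2) = \Z/2\{1, x_1, x_2, \gamma\}$ (with $x_i^2 = \gamma$ and $x_1 x_2 = 0$); cupping with $w_1(K)$ produces the length-two strings $\{1, x_1 + x_2\}$ and $\{x_1, \gamma\}$. In each case the $\Z/2[\rho]$-module decomposes as two cyclic summands of length two, generated in topological degrees $0$ and $1$. This matches precisely the $\rho$-module structure of $A_1 \oplus \Sigma^{1,0}A_1$, and rules out alternatives (an $A_2$-summand would require a length-three string, an $A_0$-summand a length-one string). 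Combined with the $\tau$-isomorphism, this determines the full $\M_2$-module structure, yielding the claimed isomorphisms.

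The main obstacle is the identification $\rho \cdot(-) = w \smile (-)$ on $H^{*,0}(X)$ for free $X$, which is standard via the Borel-construction comparison but may warrant an in-paper justification. A more constructive alternative, aligned with the cofiber-sequence style used elsewhere in the paper, is to choose an equivariant retraction onto an invariant free $S^1_a \subset T_1^{\ast}$: for $T_1^{rot} \cong S^1_a \times S^1$ the first-factor projection works; for $T_1^{anti} \cong \R^2/\Z^2$ with action $(x,y) \mapsto (x+\tfrac12,-y)$ the map $(x,y) \mapsto (x,0)$ works. The cofiber sequence then splits via the retraction, and the cofiber identifies with the Thom space of the normal bundle of the section, namely $\Sigma^{1,0}S^1_{a+}$ in the rotation case (trivial normal line bundle $S^1_a \times \R^{1,0}$) and $\Sigma^{1,1}S^1_{a+}$ in the antipodal case (twisted normal line bundle $S^1_a \times \R^{1,1}$, computed via the differential of the involution on the vertical tangent line); both have reduced cohomology $\Sigma^{1,0}A_1$ as $\M_2$-modules since $\tau$ acts as an isomorphism on $A_1$.
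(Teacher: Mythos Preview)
Your proposal is correct, and you present two valid arguments. Your secondary ``constructive alternative'' is essentially the paper's proof: the paper uses the product decompositions $T_1^{anti}\cong S^{1,1}\times S^1_a$ and $T_1^{rot}\cong S^{1,0}\times S^1_a$, runs the cofiber sequence $S^1_{a+}\hookrightarrow X_+\to S^{1,\epsilon}\wedge S^1_{a+}$, shows the differential vanishes via the quotient lemma, and then resolves the extension by using the projection $\pi_1\colon X\to S^1_a$ to show $\rho^2$ acts trivially. Your observation that the section--retraction pair splits the cofiber sequence outright is a cleaner packaging of the same ingredients; the paper's projection $\pi_1$ is exactly your retraction.

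Your primary argument, by contrast, is genuinely different. Rather than using the product structure at all, you invoke the structure theorem and $\rho$-localization to exclude free $\M_2$-summands, then identify the forgetful long exact sequence at weight zero with the Gysin sequence of the double cover $X\to X/C_2$, so that the $\rho$-action (up to the $\tau$-isomorphism) becomes cup product with the classifying class $w\in H^1(X/C_2;\Z/2)$. This reduces the problem to a nonequivariant ring computation in $H^*(T^2)$ and $H^*(K)$. The trade-off is exactly the one you flag: the identification $\rho\cdot(-)\leftrightarrow w\smile(-)$ is standard but is not established in the paper, so your argument imports an external ingredient the paper's cofiber-sequence proof does not need. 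On the other hand, your approach is more conceptual and would apply uniformly to any free $C_2$-surface once the quotient ring and covering class are known, whereas the paper's method is tailored to the specific product decompositions of these two tori.
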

\begin{proof} 
Observe another way to define the antipodal torus is by the product $T_1^{anti}=S^{1,1}\times S^1_a$. This gives rise to the cofiber sequence for $X = T_1^{anti}$
	\[
	S^1_{a+}\hookrightarrow X_+ \to S^{1,1} \wedge S^{1}_{a+}
	\]
which is illustrated below in Figure \ref{fig:cofibertorus}. 
\begin{figure}[ht]
	\begin{tikzpicture}
		\draw[red] (-4.5,0) ellipse (1cm and .3cm);
		\draw[red] (-3.5,-.7) node{$+$};
		\draw (0,0) ellipse (2cm and 1cm);
		\draw (-.8,0) arc (190:350:.8cm and 0.3cm);
		\draw[red,dashed] (-.8,-.1) arc (190:350:.8cm and 0.3cm);
		\draw (.7,-.1) arc (10:170:.7cm and 0.3cm);
		\draw[red] (.8,-.1) arc (-10:190:.8cm and 0.3cm);
		\draw[red] (2,-.7) node{$+$};
		\draw[right hook->] (-3,0)--(-2.5,0);
		\draw[->] (2.5,0)--(3,0);
		\draw (5,0) ellipse (1.5 cm and .8 cm);
		\draw (5,-.10) to[out=110,in=10](4.8,.1);
		\draw (5,-.10) to[out=70,in=190](5.2,.1);
		\draw (5,-.10)--(5,.1);
		\draw[red] (5,-.1) node{\dot};
	\end{tikzpicture}
	\caption{The cofiber sequence $S^1_{a+} \hookrightarrow T_{1+}^{anti}\to S^{1,1} \wedge S^1_{a+}$.}
	\label{fig:cofibertorus}
\end{figure}
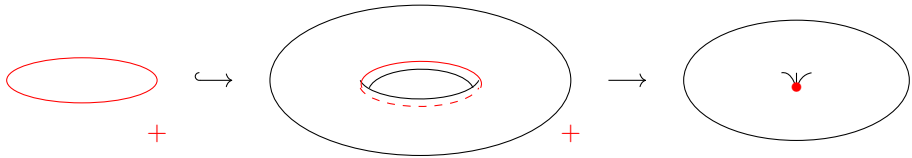
This cofiber sequence gives rise to long exact sequences on cohomology. Similar to Example \ref{anticomp}, we can organize these long exact sequences into the picture shown in Figure \ref{fig:cobfibertorusles} below. By the quotient lemma given in Lemma \ref{quotient}, $\tilde{H}^{0,0}(X_+) \cong \tilde{H}^{0}_{sing}((X/C_2)_+) \cong \Z/2$, and so $d^{0,0}=0$. From the module structure, we conclude $d^{p,q}=0$ for all $p,q$. 
\begin{figure}[ht]
	\begin{tikzpicture}[scale=.45]
		\draw[help lines,light-gray] (-5.125,-5.125) grid (5.125, 5.125);
		\draw[<->] (-5,0)--(5,0)node[right]{$p$};
		\draw[<->] (0,-5)--(0,5)node[above]{$q$};
		\anti{-0.1}{1}{red};
		\anti{1.1}{1}{blue};
		\draw[thick,->] (.4,.5)--(1.6,.5);
		\draw[thick,->] (1.4,1.5)--(2.6,1.5);
	\end{tikzpicture}\hspace{0.5in}
	\begin{tikzpicture}[scale=.45]
		\draw[help lines,light-gray] (-5.125,-5.125) grid (5.125, 5.125);
		\draw[<->] (-5,0)--(5,0)node[right]{$p$};
		\draw[<->] (0,-5)--(0,5)node[above]{$q$};
		\anti{-0.1}{1}{red};
		\anti{1.1}{1}{blue};
	\end{tikzpicture}
	\caption{The differential $d:${\color{red}{ $\tilde{H}^{*,*}(S^1_a) $}} $\to$ {\color{blue}{$\tilde{H}^{*+1,*}(S^{1,1}\wedge S^1_{a+})$}}.}
	\label{fig:cobfibertorusles}
\end{figure}
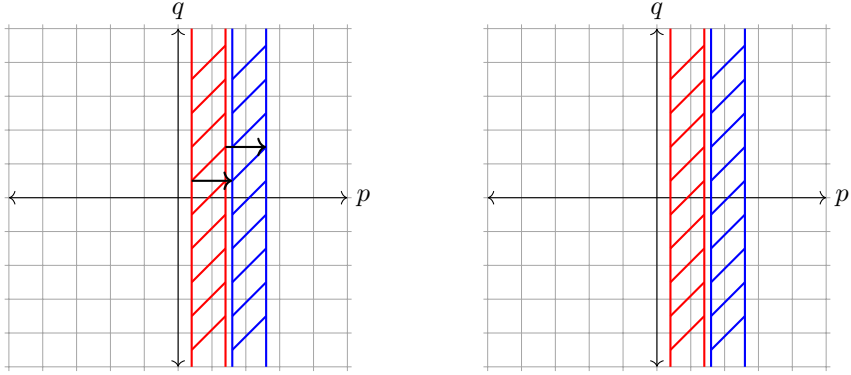

It remains to solve the extension problem
	\[
	0 \to {\color{blue}{\Sigma^{1,1}H^{*,*}(S^1_a)}} \to H^{*,*}(X) \to {\color{red}{H^{*,*}(S^1_a)}} \to 0,
	\]
which is shown on the right in Figure \ref{fig:cobfibertorusles}. The only possibility for a nontrivial extension is for there to exist a class $\alpha\in H^{0,q}(X)$ such that $\rho^2\alpha\in H^{2,q+2}(X)$ is nonzero. 

Let $x\in S^{1,1}$ be one of the two fixed points. We have the following equivariant maps
	\[
	S^1_a\cong S^{1}_a\times \{x\}\hookrightarrow S^1_a\times S^{1,1}=X\overset{\pi_1}{\longrightarrow} S^1_a
	\]
where the last map is just projection onto the first factor. Note this composition is the identity, so in particular, 
	\[
	(\pi_1)^*:H^{*,*}(S^1_a)\to H^{*,*}(X)
	\]
is injective. In fact, this is actually an isomorphism in bidegrees $(0,q)$ because both groups have been computed to be $\Z/2$. Thus for every $\alpha\in H^{0,q}(X)$, there exists a class $\beta\in H^{0,q}(S^1_a)$ such that $\alpha=(\pi_1)^*(\beta)$ and
	\[
	\rho^2\alpha=\rho^2(\pi_1)^*(\beta) = (\pi_1)^*(\rho^2\beta) = 0.
	\]
We conclude the extension in Figure \ref{fig:cobfibertorusles} is trivial, and $H^{*,*}(T_1^{anti}) \cong H^{*,*}(S^1_a) \oplus\Sigma^{1,1} H^{*,*}(S^1_a)$. Lastly, observe as $\M_2$-modules, $\Sigma^{1,1}H^{*,*}(S^1_a) \cong \Sigma^{1,0}H^{*,*}(S^1_a)$.

For the other free torus, note $T_1^{rot}=S^{1,0}\times S^1_a$, so we can make similar use of the cofiber sequence 
	\[
	S^1_{a+} \hookrightarrow T^{rot}_1 \to S^{1,0}\wedge S^1_{a+}
	\]
to see $H^{*,*}(T_1^{rot})\cong H^{*,*}(S^1_a) \oplus\Sigma^{1,0} H^{*,*}(S^1_a)$. We leave the details to the reader.
\end{proof}

We have now computed the cohomology of the two free tori and the free $C_2$-sphere (see Example \ref{anticomp}). By Theorem \ref{freeclass}, all other free $C_2$-surfaces can be obtained by forming equivariant connected sums with these three spaces. We have the following lemmas on how such surgery affects the cohomology. 

\begin{lemma}\label{anticonnlem} 
Suppose $X$ is a free $C_2$-surface. If there is a surface $Y$ such that $X\cong S^{2}_a \#_2 Y$, then 
	\[
	H^{*,*}(X) \cong H^{*,*}(S^2_a) \oplus \left(\Sigma^{1,0}A_0\right)^{\oplus \beta (Y)}.
	\]
\end{lemma}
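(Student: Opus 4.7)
The plan is to realize $X = S^2_a \#_2 Y$ as an equivariant union $X = W \cup V$, where $W := S^2_a \setminus (\mathrm{int}(D) \sqcup \mathrm{int}(\sigma D))$ is the equivariant cylinder (deformation retracting onto the equator $S^1_a$) and $V := Y' \sqcup Y' = Y' \times C_2$ is the free $C_2$-piece attached by the surgery. I will then exploit the cofiber sequence
\[
W_+ \hookrightarrow X_+ \longrightarrow X/W.
\]
Collapsing $W$ in $X$ is the same as collapsing $\partial V$ in $V$. Since filling in the removed disk by a point gives a homotopy equivalence $Y'/\partial Y' \simeq Y$, one has $X/W \cong V/\partial V \cong C_{2+}\wedge Y$, and Lemma \ref{times} yields
\[
\tilde{H}^{*,*}(C_{2+}\wedge Y)\;\cong\;(\Sigma^{1,0}A_0)^{\oplus \beta(Y)}\;\oplus\;\Sigma^{2,0}A_0.
\]

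Next I will show every connecting differential in the resulting long exact sequence vanishes. Because $H^{*,*}(S^1_a)=A_1$ is concentrated in topological degrees $0$ and $1$, only $d^{0,q}$ and $d^{1,q}$ can be nonzero. For $d^{0,0}$, the restriction $H^{0,0}(X)\to H^{0,0}(S^1_a)$ is an isomorphism $\Z/2\to\Z/2$ (both spaces are connected and a generator restricts to a generator), so exactness forces $d^{0,0}=0$. For $d^{1,0}$, I invoke the quotient lemma: $X/C_2\cong Y\#\R P^2$ is a closed surface, hence $H^{2,0}(X)\cong H^{2}_{sing}(Y\#\R P^2;\Z/2)\cong \Z/2$, and dimension counting in the LES forces $\tilde H^{2,0}(C_{2+}\wedge Y)\twoheadrightarrow H^{2,0}(X)$, giving $d^{1,0}=0$. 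Multiplication by $\tau$ (an isomorphism on both source and target of every $d^{p,q}$) propagates this to $d^{p,q}=0$ for all $p,q$, yielding the short exact sequence of $\M_2$-modules
\[
0 \longrightarrow (\Sigma^{1,0}A_0)^{\oplus \beta(Y)}\oplus \Sigma^{2,0}A_0 \longrightarrow H^{*,*}(X)\longrightarrow A_1\longrightarrow 0.
\]

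To split this, I will construct an equivariant map $f:X\to S^2_a$ by combining the inclusion $W\hookrightarrow S^2_a$ with any continuous extension of $\mathrm{id}:\partial Y'\to\partial D$ to $Y'\to D$ (which exists because $D$ is contractible), performed equivariantly on both copies of $Y'$. This yields a map of cofiber sequences into the analogous sequence for $S^2_a$, and the induced map on cofibers $\bar f : C_{2+}\wedge Y\to C_{2+}\wedge S^{2,0}$ arises from the underlying map $Y\simeq Y'/\partial Y'\to D/\partial D = S^{2,0}$. I will verify this map has odd degree: if the degree were even, a homotopy rel $\partial$ would produce a map $Y'\to S^1$ extending $\mathrm{id}:\partial Y'\to S^1$; but $[\partial Y']=0\in H_1(Y';\Z/2)$ because $\partial Y'$ bounds the removed disk in $Y$ and one may run the pair $(Y,Y')$ sequence to see this, contradicting that the identity induces an isomorphism on $H_1(S^1;\Z/2)$. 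Hence $\bar f^*$ is the inclusion of the $\Sigma^{2,0}A_0$ summand.

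A diagram chase on the resulting ladder of short exact sequences then shows $f^*:A_2=H^{*,*}(S^2_a)\hookrightarrow H^{*,*}(X)$. The submodule $(\Sigma^{1,0}A_0)^{\oplus\beta(Y)}\subset \tilde H^{*,*}(C_{2+}\wedge Y)\subset H^{*,*}(X)$ meets $f^*(A_2)$ only in zero (since inside $\tilde H^{*,*}(C_{2+}\wedge Y)$ the two summands are distinct), so it maps isomorphically onto the cokernel of $f^*$, giving
\[
H^{*,*}(X)\;\cong\; H^{*,*}(S^2_a)\,\oplus\,(\Sigma^{1,0}A_0)^{\oplus \beta(Y)}.
\]
I expect the main obstacle to be the simultaneous verification that $d^{1,0}$ vanishes and that $\bar f$ has odd degree; both rely on the concrete geometry of how $\partial Y'$ bounds modulo $2$, and everything else in the argument is formal once those facts are in hand.
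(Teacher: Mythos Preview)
Your argument is correct and takes a genuinely different route from the paper. The paper uses the cofiber sequence
\[
(Y'\times C_2)_+ \hookrightarrow X_+ \to \tilde{S^2_a},
\]
collapsing the attached free piece, and must first compute $\tilde{H}^{*,*}(\tilde{S^2_a})$ via an auxiliary cofiber sequence before analyzing the differential. You instead collapse the complementary piece $W\simeq S^1_a$, so your cofiber is $C_{2+}\wedge Y$ and the other term is the already-known $A_1$; this saves you the preliminary computation. Both proofs then resolve the extension by producing the collapse map $X\to S^2_a$ and running a ladder of short exact sequences to see that $q^*$ (the paper) or $f^*$ (you) is injective; the paper appeals at the end to the structure theorem to name the extension, while you identify the complementary summand directly. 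Your route is slightly leaner.

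One point to tighten: your degree argument for $\bar f$ is garbled. The implication ``degree even $\Rightarrow$ there is a map $Y'\to S^1$ extending the identity on $\partial Y'$'' is not justified (mod-$2$ degree zero does not by itself produce such a homotopy), and in any case the fact $[\partial Y']=0$ in $H_1(Y';\Z/2)$ is exactly what makes the degree \emph{odd}, not what obstructs an extension. A cleaner argument: your $f:X\to S^2_a$ is the identity on $W$, so the preimage of a generic interior point of $W$ is a single point, and $f$ has mod-$2$ degree $1$; hence $\bar f^*$ is an isomorphism on $\tilde H^{2,*}$. Alternatively, use naturality of the connecting homomorphism for the pairs $(Y',\partial Y')\to (D,\partial D)$ together with $[\partial Y']=0\in H_1(Y';\Z/2)$ to see directly that $H^2(D,\partial D)\to H^2(Y',\partial Y')$ is nonzero.
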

\begin{proof} 
Let $Y'$ be the space obtained by removing a small disk from $Y$. Consider the cofiber sequence
\begin{equation}\label{cofib1}
	\left(Y'\times C_2\right)_+ \hookrightarrow \left(S^{2}_a \#_2 Y\right)_+ \to \tilde{S^2_a}
\end{equation}
where $\tilde{S^2_a}$ is the ``pinched" space appearing in the cofiber sequence
\begin{equation}\label{cofib2}
	C_{2+} \hookrightarrow S^2_{a+} \to \tilde{S^2_a}.
\end{equation}
An illustration of this cofiber sequence when $Y=T_1$ is shown below in Figure \ref{fig:cofib2ex}. Note the two \textbf{{\color{red}{red}}} points on the right sphere are identified.
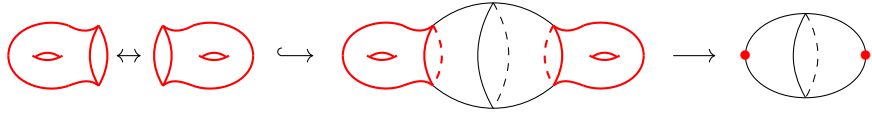
\begin{figure}[ht]
	\begin{tikzpicture}[scale=.8]
		\draw[red,thick] (0,0) to[out=220,in=330] (-.5,0);
		\draw[red,thick] (-.5,0) to[out=160,in=90] (-1.5,-.5);
		\draw[red,thick] (0,-1) to[out=140,in=20] (-.5,-1);
		\draw[red,thick] (-.5,-1) to[out=200,in=270] (-1.5,-.5);
		\draw[red,thick] (-1.1,-.5) to[out=330,in=210] (-.6,-.5) ;
		\draw[red,thick] (-1.05,-.52) to[out=35,in=145] (-.65,-.52);
		\draw[red,thick] (0,0) to[out=240,in=120] (0,-1);
		\draw[red,thick] (0,0) to[out=300,in=60] (0,-1);
		\draw[<->](.3,-.5)--(.7,-.5);
		\draw[white] (0,-1.5);
	\end{tikzpicture}
	\begin{tikzpicture}[scale=.8]
		\draw[red,thick] (2,0) to[out=320,in=210] (2.5,0);
		\draw[red,thick](2.5,0) to[out=20,in=90] (3.5,-.5);
		\draw[red,thick] (2,-1) to[out=40,in=160] (2.5,-1);
		\draw[red,thick] (2.5,-1) to[out=340,in=270] (3.5,-.5);
		\draw[red,thick] (2.6,-.5) to[out=330,in=210] (3.1,-.5);
		\draw[red,thick] (2.65,-.52) to[out=35,in=145] (3.05,-.52);
		\draw[red,thick] (2,0) to[out=240,in=120] (2,-1);
		\draw[red,thick] (2,0) to[out=300,in=60] (2,-1);
		\draw[right hook->] (3.9,-.5)--(4.5,-.5);
		\draw[white] (2,-1.5);
	\end{tikzpicture}\hspace{.1in}
	\begin{tikzpicture}[scale=.8]
		\draw (0,0) to[out=40,in=140] (2,0);
		\draw[red,thick] (2,0) to[out=320,in=210] (2.5,0);
		\draw[red,thick](2.5,0) to[out=20,in=90] (3.5,-.5);
		\draw (0,-1) to[out=320,in=220] (2,-1);
		\draw[red,thick] (2,-1) to[out=40,in=160] (2.5,-1);
		\draw[red,thick] (2.5,-1) to[out=340,in=270] (3.5,-.5);
		\draw[red,thick] (0,0) to[out=220,in=330] (-.5,0);
		\draw[red,thick] (-.5,0) to[out=160,in=90] (-1.5,-.5);
		\draw[red,thick] (0,-1) to[out=140,in=20] (-.5,-1);
		\draw[red,thick] (-.5,-1) to[out=200,in=270] (-1.5,-.5);
		\draw[red,thick] (2.6,-.5) to[out=330,in=210] (3.1,-.5);
		\draw[red,thick] (2.65,-.52) to[out=35,in=145] (3.05,-.52);
		\draw[red,thick] (-1.1,-.5) to[out=330,in=210] (-.6,-.5) ;
		\draw[red,thick] (-1.05,-.52) to[out=35,in=145] (-.65,-.52);
		\draw[red,thick] (0,0) to[out=240,in=120] (0,-1);
		\draw[dashed,red,thick] (0,0) to[out=300,in=60] (0,-1);
		\draw[dashed,red,thick] (2,0) to[out=240,in=120] (2,-1);
		\draw[red,thick] (2,0) to[out=300,in=60] (2,-1);
		\draw (1,.38) to[out=240,in=120] (1,-1.38);
		\draw[dashed] (1,.38) to[out=300,in=60] (1,-1.38);
	\end{tikzpicture}\hspace{.1in}
	\begin{tikzpicture}[scale=.8]
		\draw (0,0) ellipse (1 cm and .7 cm);
		\draw[red] (1,0) node{\dot};
		\draw[red] (-1,0) node{\dot};
		\draw[dashed] (0,.7) to[out=300,in=60] (0,-.7);
		\draw (0,.7) to[out=240,in=120] (0,-.7);
		\draw[->](-2.2,0)--(-1.5,0);
		\draw[white] (0,-1);
	\end{tikzpicture}
	\caption{An example of the cofiber sequence appearing in \ref{cofib1}. Note the red points in the right picture are identified.}
	\label{fig:cofib2ex}
\end{figure}

To make use of the cofiber sequence in \ref{cofib1}, we first compute the cohomology of $\tilde{S^2_a}$. We can extend the cofiber sequence in \ref{cofib2} to 
	\[
	S^{2}_{a+} \to \tilde{S^2_a} \to \Sigma^{1,0}C_{2+}
	\]
and then analyze the connecting homomorphism $d:H^{p,q}(S^2_a)\to \H^{p+1,q}(\Sigma^{1,0}C_{2+})$. This module map $d$ is shown in Figure \ref{fig:cofib2comp}. By the quotient lemma, 
	\[
	\tilde{H}^{0,0}(\tilde{S}^2_a) \cong\tilde{H}^{0}_{sing}(S^2_a/C_2)= \tilde{H}^{0}_{sing}(\R P^2) = 0.
	\] 
Thus $d^{0,0}$ must be an isomorphism, and by the module structure, $d^{0,q}$ must be an isomorphism for all $q$. It follows that $\coker(d)=0$ and $\tilde{H}^{*,*}(\tilde{S}^2_a)$ is the module given on the right in Figure \ref{fig:cofib2comp}. 
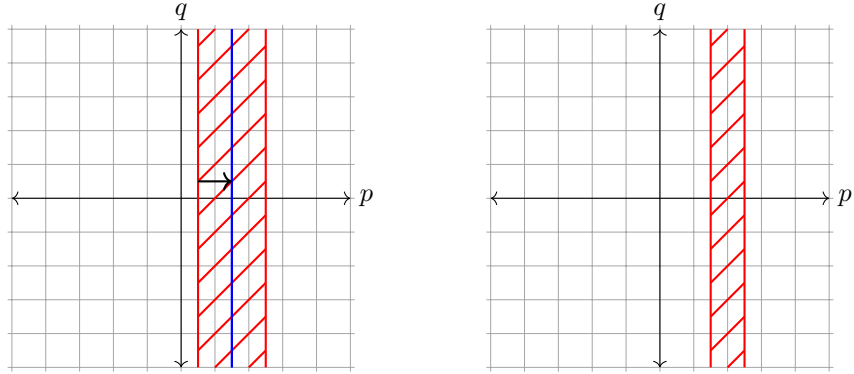
\begin{figure}[ht]
	\begin{tikzpicture}[scale=.45]
		\draw[help lines,light-gray] (-5.125,-5.125) grid (5.125, 5.125);
		\draw[<->] (-5,0)--(5,0)node[right]{$p$};
		\draw[<->] (0,-5)--(0,5)node[above]{$q$};
		\anti{0}{2}{red};
		\draw[red,thick] (.5,3.5)--(2,5);
		\draw[red,thick] (.5,4.5)--(1,5);
		\draw[red,thick] (2.5,-3.5)--(1,-5);
		\draw[red,thick] (2.5,-4.5)--(2,-5);
		\anti{1}{0}{blue};
		\draw[thick, ->] (0.5,.5)--(1.5,.5);
	\end{tikzpicture}\hspace{.5 in}
	\begin{tikzpicture}[scale=.45]
		\draw[help lines,light-gray] (-5.125,-5.125) grid (5.125, 5.125);
		\draw[<->] (-5,0)--(5,0)node[right]{$p$};
		\draw[<->] (0,-5)--(0,5)node[above]{$q$};
		\draw[red,thick] (1.5,4.5)--(2,5);
		\draw[red,thick] (2.5,-4.5)--(2,-5);
		\anti{1}{1}{red};
	\end{tikzpicture}
	\caption{The differential $d:${\color{red}{ $\tilde{H}^{*,*}(S^2_{a+}) $}} $\to$ {\color{blue}{$\tilde{H}^{*+1,*}(\Sigma^{1,0}C_{2+})$}}}
	\label{fig:cofib2comp}
\end{figure}

We now return to the cofiber sequence given in \ref{cofib1}. The cohomology of $Y\times C_2$ is computed by Lemma \ref{times}, namely
	\[
	H^{*,*}(Y\times C_2)\cong \Z/2[\tau,\tau^{-1}]\otimes_{\Z/2}H^{*}_{sing}(Y)\cong A_0\oplus \left(\Sigma^{1,0}A_0 \right)^{\oplus \beta(Y)}.
	\]
The cofiber sequence will thus give rise to the long exact sequences whose differentials are shown in Figure \ref{fig:cofibone}.
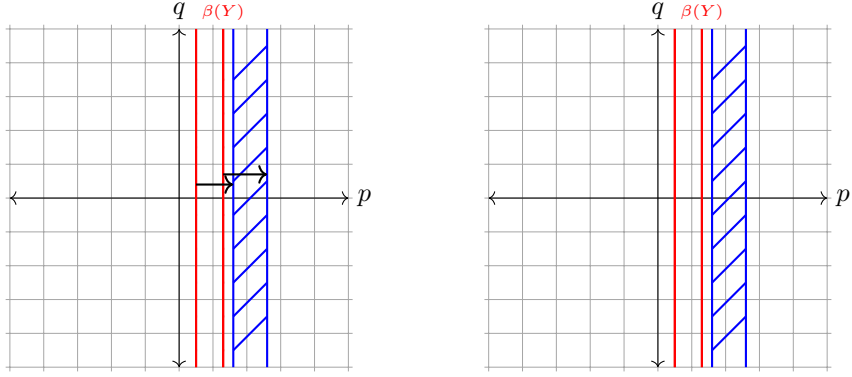
\begin{figure}[ht]
	\begin{tikzpicture}[scale=.45]
		\draw[help lines,light-gray] (-5.125,-5.125) grid (5.125, 5.125);
		\draw[<->] (-5,0)--(5,0)node[right]{$p$};
		\draw[<->] (0,-5)--(0,5)node[above]{$q$};
		\anti{0}{0}{red};
		\anti{.8}{0}{red};
		\lab{.8}{\beta(Y)}{red};
		\anti{1.1}{1}{blue};
		\draw[->,thick](0.5,.4)--(1.6,.4);
		\draw[->,thick](1.3,.7)--(2.6,.7);
	\end{tikzpicture}\hspace{.5 in}
	\begin{tikzpicture}[scale=.45]
		\draw[help lines,light-gray] (-5.125,-5.125) grid (5.125, 5.125);
		\draw[<->] (-5,0)--(5,0)node[right]{$p$};
		\draw[<->] (0,-5)--(0,5)node[above]{$q$};
		\anti{0}{0}{red};
		\anti{.8}{0}{red};
		\lab{.8}{\beta(Y)}{red};
		\anti{1.1}{1}{blue};
	\end{tikzpicture}
	\caption{The differential $d:${\color{red}{ $\tilde{H}^{*,*}((Y'\times C_2)_+) $}} $\to$ {\color{blue}{$\tilde{H}^{*+1,*}(\tilde{S^2_a})$}}}
	\label{fig:cofibone}
\end{figure}

Since $X\cong S^2_a\#_2 Y$, $X/C_2\cong \R P^2\# Y$. We can again use Lemma \ref{quotient} to see
	\[
	H^{0,0}(X)\cong H^{0}_{sing}(X/C_2) \cong \Z/2, ~~\text{ and}
	\]
	\[
	H^{1,0}(X) \cong H^{1}_{sing}(X/C_2) \cong (\Z/2)^{\beta(X)/2} = (\Z/2)^{\beta(Y)+1}.
	\]
Thus $d=0$ and we must solve the extension problem shown on the right in Figure \ref{fig:cofibone}. To do so, consider the following map of cofiber sequences where the left vertical map is collapsing the two nonequivariant components of $Y'\times C_2$ to two points.
\begin{center}
\begin{tikzcd}
	(Y'\times C_{2})_+ \arrow[r,hook] \arrow[d]& X_+ \arrow[r]\arrow[d,"q"]& \tilde{S}^{2}_a\arrow[d]\\
	C_{2+} \arrow[r,hook] & S^{2}_{a+} \arrow[r]& \tilde{S}^{2}_a
\end{tikzcd}
\end{center}
As we've shown, both cofiber sequences induce long exact sequences on cohomology where the differential is zero. Thus we have the following commutative diagram where the rows are exact.
\begin{center}
\begin{tikzcd}
	0\arrow[r]&\tilde{H}^{*,*}(\tilde{S}^{2}_a)\arrow[r] & H^{*,*}(X) \arrow[r]&H^{*,*}(Y'\times C_{2}) \arrow[r]&0\\
	0\arrow[r]&\H^{*,*}(\tilde{S}^{2}_{a}) \arrow[r] \arrow[u,"id"]& H^{*,*}(S^{2}_{a}) \arrow[r]	\arrow[u,"q^*"]& H^{*,*}(C_{2}) \arrow[r]\arrow[u,hook]& 0 
\end{tikzcd}
\end{center}
From exactness, the middle map must be injective. In particular, for all nonzero $\alpha\in H^{0,q}(S^2_a)$, $\rho^2\alpha\neq 0$ and so $\rho^2q^*(\alpha)=q^*(\rho^2\alpha)\neq 0$. We conclude the extension in Figure \ref{fig:cofibone} must be the nontrivial extension given by $H^{*,*}(S^2_a) \oplus \left(\Sigma^{1,0}A_0\right)^{\oplus \beta (Y)}$, as desired. (While this can be seen by making an appropriate choice of basis, it may be faster to apply the structure theorem given in \ref{structure}. The above shows $\rho^2$ induces an isomorphism on bidegrees $(0,q)$, and the only possibility based on this fact and the groups we've computed is for the decomposition to be the described nontrivial extension.)
\end{proof}

\begin{lemma}\label{torconnlem} 
Suppose $X$ is a free $C_2$-surface. If there is a surface $Y$ such that $X\cong T \#_2 Y$ where $T$ is a free $C_2$-torus, then 
	\[
	H^{*,*}(X) \cong H^{*,*}(T) \oplus \left(\Sigma^{1,0}A_0\right)^{\oplus \beta (Y)}.
	\]
\end{lemma}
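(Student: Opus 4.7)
The plan is to mirror the proof of Lemma \ref{anticonnlem} almost verbatim, replacing $S^2_a$ with the free torus $T$ throughout. Let $Y'$ denote $Y$ with an open disk removed. I will work with the two analogous cofiber sequences
\[
(Y'\times C_2)_+ \hookrightarrow (T\#_2 Y)_+ \to \tilde{T}, \qquad C_{2+} \hookrightarrow T_+ \to \tilde{T},
\]
where $\tilde{T}$ is the space obtained from $T$ by identifying a pair $\{p,\sigma p\}$ of conjugate points (equivalently, by collapsing the two conjugate disks in the connected-sum construction to a single basepoint). These sit in a commutative diagram via the collapse map $q:T\#_2 Y\to T$ that crushes each copy of $Y'$ to a point.

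First I would analyze the right-hand cofiber sequence. Using Proposition \ref{freetoriprop} we have $H^{*,*}(T)\cong A_1\oplus \Sigma^{1,0}A_1$, while $H^{*,*}(C_2)\cong A_0$ by Lemma \ref{times}. The restriction $H^{*,*}(T)\to H^{*,*}(C_2)$ is an isomorphism in bidegrees $(0,q)$ (by the quotient lemma applied to path-components) and vanishes identically in bidegrees $(p,q)$ with $p\neq 0$, so it is surjective and the associated long exact sequence yields a short exact sequence
\[
0\to \tilde{H}^{*,*}(\tilde{T})\to H^{*,*}(T)\to H^{*,*}(C_2)\to 0.
\]
Identifying the kernel gives $\tilde{H}^{*,*}(\tilde{T})\cong \Sigma^{1,1}A_0\oplus \Sigma^{1,0}A_1$ (this extension does not split, because $\rho$ acts nontrivially on the generator of $A_1$).

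Next I would analyze the left cofiber sequence. By Lemma \ref{times}, $H^{*,*}(Y'\times C_2)\cong A_0\oplus (\Sigma^{1,0}A_0)^{\oplus \beta(Y)}$. As in Example \ref{anticomp} and Lemma \ref{anticonnlem}, I would use the quotient lemma to compute
\[
H^{p,0}(T\#_2 Y)\cong H^p_{sing}\bigl((T\#_2 Y)/C_2;\Z/2\bigr),
\]
noting that $(T\#_2 Y)/C_2\cong (T/C_2)\#Y$ has $\beta$-genus $2+\beta(Y)$. Counting dimensions in the exact sequence in weight $q=0$ forces every connecting differential $d^{p,0}$ to vanish, and then the $\tau$-equivariance of $d$ (together with the fact that $\tau$ acts invertibly on $H^{*,*}(Y'\times C_2)$) propagates this to $d^{p,q}=0$ for all $q$. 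This gives the short exact sequence of $\M_2$-modules
\[
0\to \Sigma^{1,1}A_0\oplus \Sigma^{1,0}A_1 \to H^{*,*}(T\#_2 Y)\to A_0\oplus (\Sigma^{1,0}A_0)^{\oplus\beta(Y)}\to 0.
\]

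The main obstacle is resolving this extension. The naturality square provided by $q$ relates the two short exact sequences above, with identity on the $\tilde{H}^{*,*}(\tilde{T})$ term and the obvious inclusion $A_0\hookrightarrow A_0\oplus(\Sigma^{1,0}A_0)^{\oplus\beta(Y)}$ on the right. A diagram chase, exactly as in the closing paragraph of Lemma \ref{anticonnlem}, shows $q^*$ is injective, so every nontrivial $\rho$- and $\tau$-relation in $H^{*,*}(T)=A_1\oplus\Sigma^{1,0}A_1$ persists in $H^{*,*}(T\#_2 Y)$. Combining this with Theorem \ref{structure} (which forces a direct-sum decomposition into shifts of $\M_2$ and $A_n$'s) and the observation that dimensions of $H^{p,q}(T\#_2 Y)$ are constant in $q$ for each fixed $p$ (ruling out any $\M_2$-summands), the only possible decomposition is $A_1\oplus \Sigma^{1,0}A_1\oplus (\Sigma^{1,0}A_0)^{\oplus\beta(Y)}$. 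The bookkeeping portion—checking that the $\beta(Y)$ free summands of weight $0$ cannot be absorbed into a larger $A_n$—is the subtle point, and follows because any such absorption would produce a class in $H^{0,q}$ lifting a nontrivial $\Sigma^{1,0}A_0$ generator, contradicting the injectivity of the map $H^{0,q}(T\#_2Y)\to H^{0,q}(T)$ one sees from the naturality diagram.
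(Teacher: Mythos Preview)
Your proposal is correct and follows exactly the template the paper intends: the paper's own proof simply says to use the cofiber sequences $(Y'\times C_2)_+\hookrightarrow X_+\to\tilde T$ and $C_{2+}\hookrightarrow T_+\to\tilde T$ and proceed as in Lemma~\ref{anticonnlem}, leaving the details to the reader, and you have supplied precisely those details. Two small slips to clean up: the parenthetical ``this extension does not split'' is misphrased (you are identifying a kernel, and $\tilde H^{*,*}(\tilde T)\cong\Sigma^{1,1}A_0\oplus\Sigma^{1,0}A_1$ \emph{is} split---what you mean is that the $\rho$-action from $p=1$ to $p=2$ survives), and in your last sentence the map should read $q^*:H^{*,*}(T)\to H^{*,*}(T\#_2 Y)$, not the reverse.
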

\begin{proof} 
We use the cofiber sequence
\begin{equation*}
	\left(Y'\times C_2\right)_+ \hookrightarrow \left(T \#_2 Y\right)_+ \to \tilde{T}
\end{equation*}
where $\tilde{T}$ is the space appearing in the cofiber sequence
\begin{equation*}
	C_{2+} \hookrightarrow T_{+} \to \tilde{T}.
\end{equation*}
The proof will follow similarly to the proof of Lemma \ref{anticonnlem}. We leave the details to the reader. 
\end{proof}
Let $A_n$ denote the cohomology of antipodal $n$-sphere, i.e. $A_n=\tau^{-1}\M_2/(\rho^{n+1})$. We can summarize the results of these lemmas in the following theorem. 
\begin{theorem}\label{freeanswer} 
Let $X$ be a free $C_2$-surface. Then there are two cases for the $RO(C_2)$-graded Bredon cohomology of $X$ in \underline{$\Z/2$}-coefficients.
\begin{enumerate}
	\item[(i)] Suppose $X$ is equivariantly isomorphic to $S^2_a\#_2Y$. Then
		\[
		H^{\ast, \ast}(X; \underline{\Z/2})\cong  \left( \Sigma^{1,0}A_0 \right)^{\oplus \beta(X)/2}\oplus A_2 
		\]
	\item[(ii)] Suppose $X$ is equivariantly isomorphic to $T\#_2 Y$ where $T$ is a free $C_2$-torus. Then
		\[
		H^{\ast, \ast}(X; \underline{\Z/2})\cong  \left( \Sigma^{1,0}A_0 \right)^{\oplus \frac{\beta(X)-2}{2}}\oplus A_1 \oplus \Sigma^{1,0}A_1 
		\]
\end{enumerate}
\end{theorem}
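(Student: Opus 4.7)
The proof is essentially an assembly argument: I would combine the classification of free $C_2$-surfaces from Theorem \ref{freeclass} with the connected-sum formulas from Lemmas \ref{anticonnlem} and \ref{torconnlem}, using the already-computed cohomologies of $S^2_a$ and the free tori as input. So the first step is to observe that every free $C_2$-surface $X$ falls into one of two families under the classification: either $X \cong S^2_a \#_2 Y$ for some (possibly empty, in which case $Y = S^2$) nonequivariant surface $Y$, or $X \cong T \#_2 Y$ where $T \in \{T_1^{anti}, T_1^{rot}\}$. Parts (i), (iv), and the first option in (v) of Theorem \ref{freeclass} put $X$ in the first family (with $Y$ being a torus $T_k$, a projective plane $\R P^2$, or a connected sum $k \cdot \R P^2$ respectively), while part (ii) and the second option in (v) place $X$ in the second family.

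Next, in the first case I would invoke Lemma \ref{anticonnlem} to conclude
\[
H^{*,*}(X) \cong H^{*,*}(S^2_a) \oplus (\Sigma^{1,0}A_0)^{\oplus \beta(Y)},
\]
and substitute $H^{*,*}(S^2_a) \cong A_2$ from Example \ref{anticomp}. In the second case I would invoke Lemma \ref{torconnlem} to obtain
\[
H^{*,*}(X) \cong H^{*,*}(T) \oplus (\Sigma^{1,0}A_0)^{\oplus \beta(Y)},
\]
and then substitute $H^{*,*}(T) \cong A_1 \oplus \Sigma^{1,0}A_1$ from Proposition \ref{freetoriprop}. Note this works uniformly for both $T_1^{anti}$ and $T_1^{rot}$, since those two free tori have isomorphic cohomology as $\M_2$-modules.

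The remaining step is purely arithmetic: I need to express $\beta(Y)$ in terms of $\beta(X)$. Since the equivariant connected sum $Z \#_2 Y$ is nonequivariantly homeomorphic to $Y \# Z \# Y$, taking $\beta$-genera gives $\beta(X) = 2\beta(Y) + \beta(Z)$. Plugging in $\beta(S^2) = 0$ gives $\beta(Y) = \beta(X)/2$ in case (i), while plugging in $\beta(T_1) = 2$ gives $\beta(Y) = (\beta(X) - 2)/2$ in case (ii), which match the exponents asserted in the theorem. I expect no real obstacle here; all of the genuine work — the connecting-homomorphism analysis and the extension problems that show the connected sum splits as a direct sum rather than just fitting in a short exact sequence — has already been carried out inside Lemmas \ref{anticonnlem} and \ref{torconnlem}. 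The only thing a careful writeup must flag is that the "empty" cases $X = S^2_a$ (with $Y = S^2$) and $X = T$ (with $Y = S^2$) are correctly subsumed, since connected-summing with $S^2$ is the identity and $\beta(S^2) = 0$ makes the exponent of $\Sigma^{1,0}A_0$ vanish as required.
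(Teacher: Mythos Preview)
Your proposal is correct and follows essentially the same route as the paper: invoke Lemmas \ref{anticonnlem} and \ref{torconnlem}, substitute the known cohomologies of $S^2_a$ and the free tori, and then do a genus count. The only cosmetic difference is in that last arithmetic step: the paper computes $\beta(Y)$ by passing to the quotient $X/C_2$ and using $\chi(X)=2\chi(X/C_2)$, whereas you use the nonequivariant identification $Z\#_2 Y \cong Y\# Z\# Y$ to read off $\beta(X)=2\beta(Y)+\beta(Z)$ directly; both computations are immediate and yield the same exponents.
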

\begin{proof} 
This follows almost immediately from Lemma \ref{anticonnlem} and Lemma \ref{torconnlem}. It remains to check we have the appropriate number of summands of $\Sigma^{1,0}A_0$ in each case. Since $X$ is a free $C_2$-surface, $\chi(X)=2\chi(X/C_2)$ and so $\beta(X)=2\beta(X/C_2)-2$. Observe in (i), $X/C_2 \cong Y \# \R P^2$ so
	\[
	\beta(X) = 2\beta(X/C_2)-2 = 2(\beta(Y)+1)-2 = 2\beta(Y).
	\]
In (ii), $X/C_2\cong Y\# T_1$ or $X/C_2\cong Y \# (\R P^2 \#\R P^2)$ depending on the action on $T$. In either case,
	\[
	\beta(X) = 2\beta(X/C_2)-2 = 2(\beta(Y)+2)-2 = 2\beta(Y)+2.
	\]
In both (i) and (ii), solving for $\beta(Y)$ yields the desired number of summands.
\end{proof}

We have now completed the computation for free $C_2$-surfaces in the given coefficient system. The next section handles nonfree $C_2$-surfaces. 
%
%
\section{Computation for Nonfree Actions}\label{ch:nonfreecomp}
In this section, we compute the cohomology of all nonfree $C_2$-surfaces in coefficients given by the constant Mackey functor \underline{$\Z/2$}. We first prove some lemmas about how the various equivariant surgeries discussed in Section \ref{ch:comptools} affect the cohomology of a $C_2$-surface. Utilizing Theorem \ref{nonfreeclass} which states that all $C_2$-surfaces can be realized by doing such surgery to a simpler space, we then prove Theorem \ref{intro1}.
\begin{notation} In this section, all coefficients are understood to be \underline{$\Z/2$}, unless stated otherwise. Given a $C_2$-surface $X$, we will often use $F$ and $C$ to denote the number of isolated fixed points and the number of fixed circles, respectively. Whenever there is some ambiguity, such as when we are working with multiple $C_2$-surfaces at once, we will write $F(X)$ and $C(X)$ for the corresponding values.
\end{notation}
\begin{lemma}\label{freeplus11} 
Let $X$ be a $C_2$-surface. Suppose $X$ is isomorphic to $Y + [S^{1,1}-\text{\emph{AT}}]$ for some free $C_2$-surface $Y$. Then
	\[
	\tilde{H}^{*,*}(X;\underline{\Z/2}) \cong \left(\Sigma^{1,0} A_0\right)^{\frac{\beta(Y)+2}{2}}\oplus \Sigma^{2,2}\M_2.
	\]
\end{lemma}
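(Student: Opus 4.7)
The plan is to exploit the decomposition $X = Y' \cup_\partial A$ where $A = S^{1,1}\times D(\R^{1,1})$ is the antitube and $Y'\subset Y$ is $Y$ with the two conjugate open disks (used for surgery) removed, giving the cofiber sequence $Y'_+ \hookrightarrow X_+ \to X/Y'$. Since $A$ is the trivial $\R^{1,1}$-bundle over $S^{1,1}$, the Thom collapse identifies $X/Y' = A/\partial A \cong (S^{1,1})_+\wedge S^{1,1}$, and by the suspension isomorphism combined with $H^{*,*}(S^{1,1}) \cong \M_2 \oplus \Sigma^{1,1}\M_2$ I obtain $\tilde{H}^{*,*}(X/Y') \cong \Sigma^{1,1}\M_2 \oplus \Sigma^{2,2}\M_2$.

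To apply the associated LES I first compute $H^{*,*}(Y')$ using a second cofiber sequence $Y'_+ \hookrightarrow Y_+ \to Y/Y'$. Because $Y$ is free and the removed pair forms $C_2\times D^2$, we have $Y/Y' \cong C_{2+}\wedge S^{2,0}$, so $\tilde{H}^{*,*}(Y/Y') \cong \Sigma^{2,0}A_0$, concentrated in topological dimension $2$. Combining the associated LES with Theorem \ref{freeanswer} for $H^{*,*}(Y)$ and using the quotient lemma at $q=0$ (since $Y'/C_2$ is an open surface, $H^{2}_{sing}(Y'/C_2;\Z/2)=0$) to show the relevant connecting map is an isomorphism, I obtain the uniform answer
\[
H^{*,*}(Y') \cong \left(\Sigma^{1,0}A_0\right)^{\beta(Y)/2} \oplus A_1
\]
in both cases of Theorem \ref{freeanswer}.

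Returning to the primary LES, the connecting differential $d:H^{*,*}(Y') \to \Sigma^{2,1}\M_2\oplus \Sigma^{3,2}\M_2$ is $\M_2$-linear, so bidegree considerations force $d$ to vanish on all top-cone classes (the target groups vanish there). However, $d$ can a priori be nonzero on the $\tau^{-k}$-extensions in the bottom cone of the $A_1$ summand, potentially hitting $\theta$-type elements. To pin this down I would use the observation that $X/C_2 \cong Y/C_2$ as topological spaces (the antitube's $C_2$-quotient is a disk that exactly fills the hole created in $Y/C_2$ by collapsing the removed conjugate pair), so by the quotient lemma $H^{p,0}(X;\underline{\Z/2}) \cong H^{p}_{sing}(Y/C_2;\Z/2)$. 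This, together with $\tau$-linearity and the uniqueness of the top class from Theorem \ref{topm2}, forces $d$ to be injective at the relevant bottom-cone bidegrees, killing the $\Sigma^{1,1}\M_2$ factor of $\tilde{H}^{*,*}(X/Y')$ in the extension. Theorem \ref{structure} then lets me decompose the remainder into the claimed $\left(\Sigma^{1,0}A_0\right)^{(\beta(Y)+2)/2}\oplus\Sigma^{2,2}\M_2$ summands by bigraded rank-counting. The main obstacle will be the precise analysis of $d$ on the bottom cone of $A_1$: this is where the $\Sigma^{1,1}\M_2$ factor must be annihilated, and verifying the required injectivity using the $X/C_2\cong Y/C_2$ identification and the quotient lemma, together with managing the resulting module-level extension, is the core technical step.
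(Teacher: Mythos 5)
Your decomposition is genuinely different from the paper's: the paper collapses the core circle of the antitube, using the cofiber sequence $S^{1,1}\hookrightarrow X\to\tilde{Y}$, whereas you collapse its complement, using $Y'_+\hookrightarrow X_+\to A/\partial A\cong S^{1,1}_+\wedge S^{1,1}$. Your preliminary computations are correct: $\tilde{H}^{*,*}(X/Y')\cong\Sigma^{1,1}\M_2\oplus\Sigma^{2,2}\M_2$, $H^{*,*}(Y')\cong\left(\Sigma^{1,0}A_0\right)^{\oplus\beta(Y)/2}\oplus A_1$ in both cases of Theorem \ref{freeanswer}, and $X/C_2\cong Y/C_2$. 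The gap is at the crux, the connecting map $d$. Since the source is $\tau$-periodic, $\im(d)$ must lie in the bottom cones of the target, so the only possibly nonzero components are $d^{0,q}$ for $q\le-1$ (into $\tilde{H}^{1,q}(X/Y')\cong(\Z/2)^2$) and $d^{1,q}$ for $q\le 0$. The quotient lemma at weight $0$ does kill $d^{1,0}$, and $\tau$-linearity then kills $d^{1,q}$ for all $q$ because $\tau$ acts injectively on the bottom cone of $\Sigma^{2,2}\M_2$ in topological dimension $2$. But none of your three cited tools determines $d^{0,q}$ for $q\le-1$: the quotient lemma sees only weight $0$, where the target $\tilde{H}^{1,0}(X/Y')$ is zero; $\tau$-linearity gives nothing for the same reason ($\tau\cdot d^{0,-1}(x)$ lands in the zero group, so its vanishing is vacuous); and Theorem \ref{topm2} is already satisfied when $d=0$, because in your cofiber sequence the summand $\Sigma^{2,2}\M_2$ sits in the subobject. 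Indeed, if $d=0$ the subobject is injective and the extension splits, giving $\Sigma^{1,1}\M_2\oplus\Sigma^{2,2}\M_2\oplus\left(\Sigma^{1,0}A_0\right)^{\oplus\beta(Y)/2}\oplus A_1$, which has exactly one free summand in dimension $\ge 2$ and matches every weight-zero group, yet is the wrong answer (it gives $H^{0,-1}(X)\cong\Z/2$ rather than $0$). This is precisely the asymmetry with the paper's sequence, where the top class must be created by the differential, so Theorem \ref{topm2} forces $d\neq 0$ there but not here.

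To close the gap you need an input that sees negative weights in topological dimension $0$. One that works: $X$ has a fixed point on the core of the antitube, so restriction to it splits off an $\M_2$-summand containing $1\in H^{0,0}(X)$; in the split $d=0$ scenario $1$ would lie in the $A_1$-summand, where $\rho^2$ acts as zero, contradicting $\rho^2\cdot 1\neq 0$. Hence $d^{0,q}\neq 0$ for some, and then by $\tau$-linearity for all, $q\le-1$. You also need $\rho$-linearity together with $d^{1,*}=0$ to rule out a component of $d^{0,q}(x)$ in the bottom cone of $\Sigma^{2,2}\M_2$ (on which $\rho$ acts injectively at these bidegrees). Finally, two smaller corrections: $d$ does not annihilate the whole $\Sigma^{1,1}\M_2$ factor --- its image is only the dimension-one line of that factor's bottom cone, and the surviving top cone and deeper bottom cone must be reassembled, via a necessarily nontrivial extension, into the $\M_2$ and $\Sigma^{2,2}\M_2$ summands of the answer; and after reindexing the target of $d$ is $\Sigma^{0,1}\M_2\oplus\Sigma^{1,2}\M_2$, not $\Sigma^{2,1}\M_2\oplus\Sigma^{3,2}\M_2$.
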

\begin{proof}
Suppose $Y$ is a free $C_2$-surface. In Section \ref{ch:freecomp} we computed the cohomology of all such surfaces. Since $\tau$ acts invertibly on the modules $A_n$, we can regard these modules as $\tau^{-1}\M_2\cong \mathbb{F}_2[\tau,\tau^{-1},\rho]$-modules. If we ignore the action of $\rho$ (in other words, regard as just $\mathbb{F}_2[\tau,\tau^{-1}]$-modules), observe the cohomology of all free $C_2$-surfaces can be described as
	\[
	H^{*,*}(Y) \cong A_0\oplus \left(\Sigma^{1,0}A_0\right)^{\oplus \frac{\beta(Y)+2}{2}}\oplus \Sigma^{2,0}A_0
	\]
where as before $A_0\cong \tau^{-1}\M_2/(\rho)$. We now consider the following cofiber sequence for $X$
	\[
	S^{1,1} \hookrightarrow X \to \tilde{Y}
	\]
where $\tilde{Y}$ is the pinched space appearing in the cofiber sequence
	\[
	C_{2+} \hookrightarrow Y_+ \to \tilde{Y}.
	\]
We can compute the cohomology of $\tilde{Y}$ by extending to the cofiber sequence
\begin{equation}
	Y_+\to\tilde{Y}\to \Sigma^{1,0}C_{2+}.
\end{equation}
For this computation, it suffices to understand $\tilde{H}^{*,*}(\tilde{Y})$ as a $\mathbb{F}_2[\tau,\tau^{-1}]$-module. The differential is shown in Figure \ref{fig:tildey}.
\begin{figure}[ht]
	\begin{tikzpicture}[scale=.45]
		\draw[help lines,light-gray] (-5.125,-5.125) grid (5.125, 5.125);
		\draw[<->] (-5,0)--(5,0)node[right]{$p$};
		\draw[<->] (0,-5)--(0,5)node[above]{$q$};
		\anti{0}{0}{red};
		\anti{1.2}{0}{red};
		\anti{2}{0}{red};
		\anti{.8}{0}{blue};
		\lab{1.2}{\frac{\beta(Y)+2}{2}}{red};
		\draw[->](.5,.5)--(1.3,.5);
	\end{tikzpicture}\hspace{.5in}
	\begin{tikzpicture}[scale=.45]
		\draw[help lines,light-gray] (-5.125,-5.125) grid (5.125, 5.125);
		\draw[<->] (-5,0)--(5,0)node[right]{$p$};
		\draw[<->] (0,-5)--(0,5)node[above]{$q$};
		\anti{1.2}{0}{red};
		\anti{2}{0}{red};
		\lab{1.2}{\frac{\beta(Y)+2}{2}}{red};
	\end{tikzpicture}
	\caption{The differential ${\color{red}{\tilde{H}^{*,*}(Y_+)}}\to{\color{blue}{\tilde{H}^{*,*}(\Sigma^{1,0}C_{2+})}}$.}
	\label{fig:tildey}
\end{figure}

Now $\tilde{H}^{0,0}(\tilde{Y})\cong \tilde{H}^{0}_{sing}(\tilde{Y}/C_2)$ by Lemma \ref{quotient}, and $\tilde{H}^{0}_{sing}(\tilde{Y}/C_2)=0$. Hence $d^{0,0}$ must be an isomorphism, and by the module structure, $d^{0,q}$ must be an isomorphism for all $q$. We conclude 
	\[
	\tilde{H}^{*,*}(\tilde{Y})\cong \left(\Sigma^{1,0}A_0\right)^{\oplus \frac{\beta(Y)+2}{2}}\oplus \Sigma^{2,0}A_0
	\]
as $\mathbb{F}_2[\tau,\tau^{-1}]$-modules.

We now return to the cofiber sequence $S^{1,1} \hookrightarrow X \to \tilde{Y}$. The differential is shown in Figure \ref{fig:freeplus11}, where we have omitted possible $\rho$ actions in the picture of $\tilde{H}^{*,*}(\tilde{Y})$.
\begin{figure}[ht]
	\begin{tikzpicture}[scale=.45]
		\draw[help lines,light-gray] (-5.125,-5.125) grid (5.125, 5.125);
		\draw[<->] (-5,0)--(5,0)node[right]{$p$};
		\draw[<->] (0,-5)--(0,5)node[above]{$q$};
		\cone{1}{1}{red};
		\anti{.8}{0}{blue};
		\lab{.8}{\frac{\beta(Y)+2}{2}}{blue};
		\anti{2}{0}{blue};
		\draw[->](1.5,1.5)--(2.5,1.5);
	\end{tikzpicture}\hspace{.5 in}
	\begin{tikzpicture}[scale=.45]
		\draw[help lines,light-gray] (-5.125,-5.125) grid (5.125, 5.125);
		\draw[<->] (-5,0)--(5,0)node[right]{$p$};
		\draw[<->] (0,-5)--(0,5)node[above]{$q$};
		\anti{.8}{0}{blue};
		\lab{.8}{\frac{\beta(Y)+2}{2}}{blue};
		\draw[red,thick](1.5,-5)--(1.5,-.5)--(-3,-5);
		\draw[red,thick](2.5,5)--(2.5,2.5)--(5,5);
		\draw[blue,thick](2.5,.5)--(2.5,-5);
	\end{tikzpicture}
	\caption{The differential ${\color{red}{\tilde{H}^{*,*}(S^{1,1})}}\to{\color{blue}{\tilde{H}^{*+1,*}(\tilde{Y})}}$.}
	\label{fig:freeplus11}
\end{figure}
The differential must be nonzero for if it were zero, the answer for the cohomology of $X$ would not have a free summand generated in dimension $(2,2)$ as is required by Theorem \ref{topm2}. Using the module structure, we thus know $d^{p,q}$ for all $(p,q)$, and it remains to solve the extension problem given to the right in Figure \ref{fig:freeplus11}. We can conclude by Theorem \ref{topm2} (or by the forgetful long exact sequence) that the extension must be nontrivial, and we arrive at the desired answer. 
\end{proof}

\begin{lemma}\label{freeplus10}
Let $X$ be a $C_2$-surface that is isomorphic to $Y+[S^{1,0}-\text{\emph{AT}}]$ for some free $C_2$-surface $Y$. Then 
	\[
	\tilde{H}^{*,*}(X;\underline{\Z/2}) \cong \left(\Sigma^{1,0} A_0\right)^{\frac{\beta(Y)+2}{2}}\oplus \Sigma^{2,1}\M_2.
	\]
\end{lemma}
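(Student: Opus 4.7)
The plan is to mirror the argument of Lemma \ref{freeplus11}, substituting the $S^{1,0}$-antitube for the $S^{1,1}$-antitube throughout. Since $X = Y + [S^{1,0}\text{-AT}]$ is built by attaching the handle $S^{1,0}\times D(\R^{1,1})$ along a pair of conjugate disks in $Y$, collapsing the core circle $S^{1,0}\times\{0\}$ (which is a fixed circle in $X$) recovers, up to homotopy, the same pinched space $\tilde Y$ used in the previous proof. The cofiber sequence I would use is
\[
S^{1,0}\hookrightarrow X\to \tilde Y,
\]
and I would import the calculation $\tilde H^{*,*}(\tilde Y)\cong (\Sigma^{1,0}A_0)^{\oplus (\beta(Y)+2)/2}\oplus \Sigma^{2,0}A_0$ (as an $\mathbb{F}_2[\tau,\tau^{-1}]$-module) directly from Lemma \ref{freeplus11}.

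Next I would analyze the connecting differential $d\colon \tilde H^{*,*}(S^{1,0}) \to \tilde H^{*+1,*}(\tilde Y)$. By the suspension isomorphism, $\tilde H^{*,*}(S^{1,0})\cong \Sigma^{1,0}\M_2$, whose top cone is generated by a single class in bidegree $(1,0)$. The crucial claim will be that $d^{1,0}$ is nonzero: if $d$ were identically zero then, by injectivity of $\M_2$ (Theorem \ref{injective}), $H^{*,*}(X)$ would split as $\Sigma^{1,0}\M_2 \oplus \tilde H^{*,*}(\tilde Y)$, which contains no summand of the form $\Sigma^{i,j}\M_2$ with $i\geq 2$. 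However, the fixed set of $X$ is precisely the core circle of the antitube (codimension one in the surface), so Theorem \ref{topm2} demands exactly one such summand, in bidegree $(2,1)$. This contradiction forces $d^{1,0}\neq 0$, and then $\M_2$-module linearity together with the structure of $\tilde H^{*,*}(\tilde Y)$ pins down $d$ throughout the top cone and constrains the bottom-cone differentials exactly as in the preceding lemma.

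With $d$ determined, I would solve the extension problem
\[
0 \to \coker(d)\to H^{*,*}(X)\to \ker(d)\to 0,
\]
whose cokernel is $(\Sigma^{1,0}A_0)^{\oplus(\beta(Y)+2)/2}$ and whose kernel provides the body of a prospective $\Sigma^{2,1}\M_2$. By Theorem \ref{topm2} (or, equivalently, by verifying via the forgetful long exact sequence that the generator in bidegree $(2,1)$ has a nontrivial $\rho$-lift), the extension is forced to be the nontrivial one, producing the $\Sigma^{2,1}\M_2$ summand and yielding the claimed decomposition. The hard part, just as in Lemma \ref{freeplus11}, will be to pin down the differential tightly enough that Theorem \ref{topm2} selects a unique nontrivial extension; once $d^{1,0}\neq 0$ is established, the rest is routine bookkeeping modeled on the previous lemma.
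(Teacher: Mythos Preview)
Your proposal is correct and follows essentially the same approach as the paper: the paper's proof simply states that one should run the argument of Lemma \ref{freeplus11} with the cofiber sequence $S^{1,0}\hookrightarrow X\to\tilde Y$ in place of $S^{1,1}\hookrightarrow X\to\tilde Y$, and you have carried this out in detail. Your use of Theorem \ref{topm2} to force both the nonvanishing of $d^{1,0}$ and the nontriviality of the resulting extension is exactly the mechanism the paper relies on in the parallel lemma.
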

\begin{proof} 
The proof will follow as in the proof of Lemma \ref{freeplus11} except now using the cofiber sequence $S^{1,0}\hookrightarrow X \to \tilde{Y}$. We leave the details to the reader. 
\end{proof}

The following two lemmas state the cohomology of doubling spaces. The definition of these spaces can be found in Remark \ref{specialconnsum}

\begin{lemma}\label{doubspace11} 
Suppose $X$ is homeomorphic to \emph{Doub}$(Y,1:S^{1,1})$ for some nonequivariant surface $Y$. Then
	\[
	\H^{*,*}(X;\underline{\Z/2})\cong \left(\Sigma^{1,0}A_0\right)^{\beta(Y)} \oplus \Sigma^{2,2}\M_2.
	\]
\end{lemma}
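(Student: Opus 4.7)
The approach parallels that of Lemma~\ref{anticonnlem}. Let $Y'$ denote $Y$ with a small open disk removed, so $X \cong S^{2,2}\#_2 Y$ contains $Y'\times C_2$ as the pair of conjugate copies of $Y'$, and there is a cofiber sequence
\[
(Y'\times C_2)_+ \hookrightarrow X_+ \to \widetilde{S^{2,2}},
\]
where $\widetilde{S^{2,2}}$ is the cofiber of $C_{2+}\hookrightarrow S^{2,2}_+$ obtained by pinching a free $C_2$-orbit of points in $S^{2,2}$.

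I would first compute $\tilde{H}^{*,*}(\widetilde{S^{2,2}})$ via the extended cofiber sequence $S^{2,2}_+\to \widetilde{S^{2,2}}\to \Sigma^{1,0}C_{2+}$. The crucial observation is that the inclusion $C_2\hookrightarrow S^{2,2}$ of a free orbit is equivariantly nullhomotopic, since a pair of conjugate points in $\R^{2,2}$ can be linearly contracted through the fixed origin. Hence the induced map $H^{*,*}(S^{2,2})\cong \M_2\oplus \Sigma^{2,2}\M_2\to H^{*,*}(C_2)=A_0$ factors through the augmentation: it is the canonical quotient (with image the non-negative $\tau$-tower) on the $\M_2$-summand and zero on the $\Sigma^{2,2}\M_2$-summand. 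Computing the resulting kernel and cokernel and identifying the induced module extension (which corresponds to the equivariant homotopy equivalence $\widetilde{S^{2,2}}\simeq S^{2,2}\vee S^{1,1}$) gives $\tilde{H}^{*,*}(\widetilde{S^{2,2}})\cong \Sigma^{1,1}\M_2\oplus \Sigma^{2,2}\M_2$.

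By Lemma~\ref{times}, $H^{*,*}(Y'\times C_2)\cong A_0\oplus(\Sigma^{1,0}A_0)^{\oplus \beta(Y)}$, since $Y'$ deformation retracts onto a wedge of $\beta(Y)$ circles. To analyze the connecting differential $d\colon \tilde{H}^{*,*}((Y'\times C_2)_+)\to \tilde{H}^{*+1,*}(\widetilde{S^{2,2}})$, I would invoke Lemma~\ref{quotient} together with the identification $X/C_2\cong Y$ (since $S^{2,2}/C_2\cong S^2$, the equivariant connected sum descends to an ordinary one); this gives $H^{p,0}(X)\cong H^{p}_{sing}(Y)$, and a dimension count forces $d^{p,0}=0$ for all $p$, so by $\tau$-linearity $d^{p,q}=0$ on generators in all non-negative weights.

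The main obstacle is the final step: pinning down $d$ in negative weights and resolving the resulting module extension. By examining the long exact sequence in bidegrees $(0,q)$ and $(1,q)$ for $q\leq -1$ and comparing against the groups dictated by Theorem~\ref{topm2} (which guarantees a $\Sigma^{2,2}\M_2$ top-class summand) and the structure theorem (Theorem~\ref{structure}), I would show that $d^{0,q}$ is injective for $q\leq -1$ while $d^{1,q}=0$ for all $q$. Reading off the induced $\M_2$-module extension then yields $H^{*,*}(X)\cong \M_2\oplus (\Sigma^{1,0}A_0)^{\oplus \beta(Y)}\oplus \Sigma^{2,2}\M_2$; passing to reduced cohomology gives the claimed decomposition $\tilde{H}^{*,*}(X)\cong (\Sigma^{1,0}A_0)^{\oplus \beta(Y)}\oplus \Sigma^{2,2}\M_2$.
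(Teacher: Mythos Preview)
Your approach is valid but takes a genuinely different route from the paper. The paper does \emph{not} parallel Lemma~\ref{anticonnlem}; instead it uses the cofiber sequence
\[
S^{1,1}\hookrightarrow X \to C_{2+}\wedge Y,
\]
where $S^{1,1}$ is the core circle of the $S^{2,2}$ piece. This is considerably cleaner for two reasons. First, the cofiber $C_{2+}\wedge Y$ has $\tau$-periodic cohomology $(\Sigma^{1,0}A_0)^{\beta(Y)}\oplus\Sigma^{2,0}A_0$, so a single degree determines the entire differential. Second, the domain $\tilde H^{*,*}(S^{1,1})=\Sigma^{1,1}\M_2$ is free on one generator, so the whole differential is fixed by $d^{1,1}\colon\Z/2\to\Z/2$, which must be nonzero by Theorem~\ref{topm2}. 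The extension then resolves immediately.

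Your sequence $(Y'\times C_2)_+\hookrightarrow X_+\to\widetilde{S^{2,2}}$ does work, but the ``main obstacle'' step is under-argued. Because the codomain $\tilde H^{*,*}(\widetilde{S^{2,2}})\cong\Sigma^{1,1}\M_2\oplus\Sigma^{2,2}\M_2$ is \emph{not} $\tau$-inverted, the vanishing $d^{p,0}=0$ does not propagate to negative weights by $\tau$-linearity, and you must determine $d^{0,q}$ for $q\le -1$ separately. Appealing to Theorem~\ref{topm2} and the structure theorem alone does not obviously pin this down: those results do not a~priori rule out, say, a $\Sigma^{1,2}\M_2$ summand (which would also produce a class in $H^{0,-1}(X)$). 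The clean way to finish---in the spirit of Lemma~\ref{anticonnlem} you invoke---is to use the map of cofiber sequences to $C_{2+}\to S^{2,2}_+\to\widetilde{S^{2,2}}$: since the collapse $Y'\times C_2\to C_2$ induces an isomorphism on $H^{0,*}$, your $d^{0,q}$ agrees with the connecting map for $S^{2,2}$, which is visibly injective for $q\le -1$ because $H^{0,-1}(S^{2,2})=0$. With that input (or alternatively $\rho$-localization to count free summands), your argument goes through.
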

\begin{proof} 
Consider the cofiber sequence
	\[
	S^{1,1}\hookrightarrow X \to C_{2+}\wedge Y.
	\]
By Lemma \ref{times} the cohomology of $C_{2+}\wedge Y$ is given by
	\[
	\tilde{H}^{p,q}(C_{2+}\wedge Y) \cong \Z/2[\tau,\tau^{-1}]\otimes_{\Z/2} \H^{*}_{sing}(Y)\cong  \left(\Sigma^{1,0} A_0 \right)^{\oplus \beta(Y)} \oplus \Sigma^{2,0} A_0.
	\]
The picture of the differential in the cofiber sequence is shown on the left in Figure \ref{fig:doubcomp}. The differential cannot be zero for if it were, the answer for the cohomology of $X$ would violate Theorem \ref{topm2}. Thus $d^{1,1}$ must be an isomorphism, and we can use the module structure to determine $d^{p,q}$ for all $p,q$. 
\begin{figure}[ht]
	\begin{tikzpicture}[scale=.45]
		\draw[help lines,light-gray] (-5.125,-5.125) grid (5.125, 5.125);
		\draw[<->] (-5,0)--(5,0)node[right]{$p$};
		\draw[<->] (0,-5)--(0,5)node[above]{$q$};
		\cone{1}{1}{red};
		\anti{.8}{0}{blue};
		\lab{.8}{\beta(Y)}{blue};
		\anti{2}{0}{blue};
		\draw[->](1.5,1.5)--(2.5,1.5);
	\end{tikzpicture}\hspace{.5 in}
	\begin{tikzpicture}[scale=.45]
		\draw[help lines,light-gray] (-5.125,-5.125) grid (5.125, 5.125);
		\draw[<->] (-5,0)--(5,0)node[right]{$p$};
		\draw[<->] (0,-5)--(0,5)node[above]{$q$};
		\anti{.8}{0}{blue};
		\lab{.8}{\beta(Y)}{blue};
		\draw[red,thick](1.5,-5)--(1.5,-.5)--(-3,-5);
		\draw[red,thick](2.5,5)--(2.5,2.5)--(5,5);
		\draw[blue,thick](2.5,.5)--(2.5,-5);
	\end{tikzpicture}
	\caption{The differential ${\color{red}{\tilde{H}^{*,*}(S^{1,1})}}\to{\color{blue}{\tilde{H}^{*+1,*}(C_{2+}\wedge Y)}}$.}
	\label{fig:doubcomp}
\end{figure}

It remains to solve the extension problem 
	\[
	0\to {\color{blue}{\coker(d)}} \to \tilde{H}^{*,*}(X) \to {\color{red}{\ker(d)}} \to 0	
	\]
which is shown on the right in Figure \ref{fig:doubcomp}; note the kernel is shown in red, and the cokernel is shown in blue. Using either Theorem \ref{topm2} or the forgetful long exact sequence, we can see the extension must be nontrivial, and in particular, 
	\[
	\tilde{H}^{*,*}(X;\underline{\Z/2}) \cong \left(\Sigma^{1,0} A_0\right)^{\beta(Y)}\oplus \Sigma^{2,2}\M_2.
	\]
\end{proof}

\begin{lemma}\label{doubspace10} 
Suppose $X$ is isomorphic to \emph{Doub}$(Y,1:S^{1,0})$ for some nonequivariant surface $Y$. Then
	\[
	\H^{*,*}(X;\underline{\Z/2})\cong \left(\Sigma^{1,0}A_0\right)^{\beta(Y)} \oplus \Sigma^{2,1}\M_2.
	\]
\end{lemma}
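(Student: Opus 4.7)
The plan is to mirror the proof of Lemma \ref{doubspace11} essentially verbatim, with the only substantive change being that the role of the representation sphere $S^{1,1}$ is replaced by $S^{1,0}$, which shifts the resulting top-cone generator from bidegree $(2,2)$ to $(2,1)$. Concretely, I would start from the cofiber sequence
\[
S^{1,0} \hookrightarrow X \to C_{2+} \wedge Y
\]
obtained by collapsing the embedded $S^{1,0}$ used in the doubling construction (its complement in $X$ is, up to homotopy, the two conjugate copies of $Y$ with a disk removed, which wedge-sums to $C_{2+}\wedge Y$).

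Next, I would invoke Lemma \ref{times} to identify
\[
\tilde{H}^{*,*}(C_{2+}\wedge Y;\underline{\Z/2}) \cong \Z/2[\tau,\tau^{-1}] \otimes_{\Z/2} \tilde{H}^{*}_{sing}(Y;\Z/2) \cong \left(\Sigma^{1,0}A_0\right)^{\oplus \beta(Y)} \oplus \Sigma^{2,0}A_0,
\]
and then analyze the connecting differential $d\colon \tilde{H}^{*,*}(C_{2+}\wedge Y) \to \tilde{H}^{*+1,*}(S^{1,0})$. The cohomology $\tilde{H}^{*,*}(S^{1,0}) \cong \Sigma^{1,0}\M_2$ is a shifted copy of the top cone, concentrated in topological dimension $\geq 1$, so the only possible nonzero differential emanates from the $\Sigma^{2,0}A_0$ summand into the $(2,0)$-generator of $\Sigma^{1,0}\M_2$ (after suspension), and similarly in each weight by $\tau$-equivariance.

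I would then argue the differential is forced to be nonzero. Here the critical input is Theorem \ref{topm2}: since $X$ is a closed $2$-dimensional $C_2$-manifold whose fixed set contains the core circle of the sewn-in $S^{1,0}\times D(\R^{1,1})$, the largest fixed submanifold has dimension $1$, so $n=2$ and $k=1$. The theorem guarantees exactly one free summand $\Sigma^{i,j}\M_2$ with $i\geq n$, occurring at $(i,j)=(2,1)$. If the differential were zero, $\tilde{H}^{*,*}(X)$ would split as $\Sigma^{1,0}\M_2 \oplus \tilde{H}^{*,*}(C_{2+}\wedge Y)$, which contains no generator in topological dimension $2$ at all --- a contradiction. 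Hence $d$ is the nontrivial map hitting the top cone on the nose, and by $\tau$-linearity this pins down $d$ in all bidegrees.

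Finally, I would resolve the resulting extension
\[
0 \to \mathrm{coker}(d) \to \tilde{H}^{*,*}(X) \to \ker(d) \to 0,
\]
where $\ker(d)$ consists of the $\beta(Y)$ summands $\Sigma^{1,0}A_0$ together with the $\tau$-torsion-free quotient of $\Sigma^{2,0}A_0$ produced by killing the generator in bidegree $(2,0)$, and $\mathrm{coker}(d)$ is the bottom cone of $\Sigma^{2,1}\M_2$. The same appeal to Theorem \ref{topm2} (which demands a \emph{free} $\Sigma^{2,1}\M_2$ summand, not just its bottom half) forces the extension to be nontrivial, assembling the remaining pieces into a single copy of $\Sigma^{2,1}\M_2$ and leaving behind $\left(\Sigma^{1,0}A_0\right)^{\oplus \beta(Y)}$, which yields the claimed isomorphism. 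The main obstacle, as in Lemma \ref{doubspace11}, is justifying the nontriviality of both the differential and the extension; both reductions sit on Theorem \ref{topm2}, and one can alternatively verify the extension via the forgetful long exact sequence (Lemma \ref{forgetfulles}) by checking that the classes in bidegree $(2,1)$ do not forget to zero in $H^{2}_{sing}(X;\Z/2)$.
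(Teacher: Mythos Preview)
Your overall strategy is exactly the paper's: run the cofiber sequence $S^{1,0}\hookrightarrow X \to C_{2+}\wedge Y$, force the differential to be nonzero via Theorem~\ref{topm2}, and resolve the extension by the same appeal. But you have reversed the connecting homomorphism. For a cofiber sequence $A\hookrightarrow X\to C$ the long exact sequence in cohomology (see \S\ref{2.3}) reads
\[
\cdots \to \tilde H^{p,q}(C)\to \tilde H^{p,q}(X)\to \tilde H^{p,q}(A)\xrightarrow{\,d\,}\tilde H^{p+1,q}(C)\to\cdots,
\]
so here $d\colon \tilde H^{*,*}(S^{1,0})\to \tilde H^{*+1,*}(C_{2+}\wedge Y)$, not the map you wrote. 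This is not merely cosmetic: in your direction the differential would be an $\M_2$-map from a sum of shifted copies of $A_0$ into a shifted copy of $\M_2$, and any such map vanishes (every element of $A_0$ is infinitely $\tau$-divisible, while no nonzero class in the relevant bidegrees of $\M_2$ is). Thus your ``$d$ must be nonzero by Theorem~\ref{topm2}'' step would collide with the module structure rather than pin $d$ down. Your descriptions of $\ker(d)$ and $\coker(d)$ are correspondingly swapped, and the side remark that $\Sigma^{1,0}\M_2$ is ``concentrated in topological dimension $\geq 1$'' is false --- its bottom cone extends into arbitrarily negative $p$.

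With the arrow reversed the argument goes through exactly as you intend and exactly as in Lemma~\ref{doubspace11}: the generator of $\Sigma^{1,0}\M_2$ in bidegree $(1,0)$ is forced to hit the $\Sigma^{2,0}A_0$ summand; the kernel of $d$ is then the bottom cone of $\Sigma^{1,0}\M_2$ together with the $\rho$-multiples in its top cone, the cokernel is $(\Sigma^{1,0}A_0)^{\oplus\beta(Y)}$ plus the negative-weight half of $\Sigma^{2,0}A_0$, and Theorem~\ref{topm2} (or the forgetful long exact sequence) glues the leftover pieces into the required $\Sigma^{2,1}\M_2$.
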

\begin{proof} The proof follows similarly to the proof of Lemma \ref{doubspace11} using the cofiber sequence
	\[
	S^{1,0}\hookrightarrow X \to C_{2+} \wedge Y.
	\]
We leave the details to the reader. 
\end{proof}

We are now ready to prove the main theorem about the cohomology of nonfree, nontrivial $C_2$-surfaces. The proof will make use of the classification given in Theorem \ref{nonfreeclass}. We will induct on the $\beta$-genus of the surface and explore the various cases for what type of surgery is needed to construct a given $C_2$-surface from a $C_2$-surface of lower $\beta$-genus.

\begin{theorem}\label{nonfreeanswer} 
Let $X$ be a nontrivial, nonfree $C_2$-surface. Let $F$ denote the number of isolated fixed points, $C$ denote the number of fixed circles, and $\beta$ denote the $\beta$-dimension. There are two cases for the reduced $RO(C_2)$-graded Bredon cohomology of $X$ in \underline{$\Z/2$}-coefficients:
\begin{enumerate}
	\item[(i)] Suppose $C=0$. Then
		\[
		\tilde H^{\ast, \ast}(X; \underline{\Z/2})\cong  \left(\Sigma^{1,1} \M_2\right)^{\oplus F-2} \oplus \left(\Sigma^{1,0}A_0 \right)^{\oplus \frac{\beta+2-F}{2}} \oplus \Sigma^{2,2}\M_2
		\]
	\item[(ii)] Suppose $C\neq 0$. Then 
	\begin{align*}
		\tilde H^{\ast, \ast}(X; \underline{\Z/2})\cong &  \left(\Sigma^{1,0} \M_2\right)^{\oplus C-1}\oplus  \left(\Sigma^{1,1} \M_2\right)^{\oplus F+C-1}\\
		&\oplus \left(\Sigma^{1,0}A_0 \right)^{\oplus \frac{\beta+2-(F+2C)}{2}} \oplus \Sigma^{2,1}\M_2
	\end{align*}
\end{enumerate}
\end{theorem}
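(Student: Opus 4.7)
The plan is to induct on the $\beta$-genus of $X$, using Theorem~\ref{nonfreeclass} to reduce either to a base case or to a surgery step on a surface of strictly smaller $\beta$-genus. The base cases are the equivariant spheres $S^{2,1}$ (with $F=0$, $C=1$, $\beta=0$) and $S^{2,2}$ (with $F=2$, $C=0$, $\beta=0$), whose reduced cohomologies $\Sigma^{2,1}\M_2$ and $\Sigma^{2,2}\M_2$ match the formulas of the theorem directly, together with the doubling spaces, handled by Lemmas~\ref{doubspace11} and~\ref{doubspace10}; since $\beta(\mathrm{Doub}(Y,\,1{:}S^{1,\epsilon})) = 2\beta(Y)$ and these spaces have $F+2C = 2$, a short check confirms their cohomologies agree with the formulas.

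For the inductive step, Theorem~\ref{nonfreeclass} writes $X = Y + [\text{surgery}]$ with $\beta(Y) < \beta(X)$, where the surgery is one of $S^{1,0}$-, $S^{1,1}$-, or $FM$-surgery. If $Y$ is free, Lemmas~\ref{freeplus11} and~\ref{freeplus10} give the cohomology of $X$ immediately (the $FM$-surgery case cannot arise here as it requires a fixed point on $Y$). If $Y$ is nonfree, by induction its cohomology has the form stated in the theorem. I would first record the effect of each surgery on the invariants: $S^{1,1}$-surgery sends $(F,C,\beta)\mapsto(F+2,C,\beta+2)$, $S^{1,0}$-surgery sends $(F,C,\beta)\mapsto(F,C+1,\beta+2)$, and $FM$-surgery sends $(F,C,\beta)\mapsto(F-1,C+1,\beta+1)$. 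For each, the proof proceeds via a cofiber sequence analogous to those used in Lemmas~\ref{freeplus11} and~\ref{freeplus10}: namely $S^{1,1}\hookrightarrow X\to \tilde Y$ or $S^{1,0}\hookrightarrow X\to \tilde Y$ (where $\tilde Y$ collapses the two removed conjugate disks to a single point) for the antitube surgeries, and $S^{1,0}\hookrightarrow X\to Y$ for $FM$-surgery (using that the equivariant M\"obius band deformation retracts onto its zero section $S^{1,0}$, and that collapsing the fixed representation disk $D(\R^{2,2})$ is an equivariant homotopy equivalence on $Y$). In the resulting long exact sequence, Lemma~\ref{quotient} forces the connecting map in bidegree $(0,0)$; the $\M_2$-module structure propagates this to all bidegrees; and the remaining extension problem is resolved using Lemma~\ref{forgetfulles} together with Theorem~\ref{topm2}, which pins down the unique free $\M_2$-summand in top topological dimension based on the dimension of the largest fixed component of $X$.

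The main obstacle will be the $FM$-surgery: it is the most geometrically intricate of the three, the only one for which $\beta$ changes by $1$ rather than $2$, and it is responsible for the transition from case (i) to case (ii) whenever $C(Y)=0$. The delicate checks there are that the top summand correctly switches from $\Sigma^{2,2}\M_2$ to $\Sigma^{2,1}\M_2$ (dictated by Theorem~\ref{topm2}, because the highest-dimensional component of the fixed set of $X$ is now a circle), that the number of $\Sigma^{1,1}\M_2$-summands rises by exactly one, and that no new $\Sigma^{1,0}\M_2$-summand is introduced in the case $C(Y)=0$. A final check, using the parity constraints on $\beta$, $F$, and $C$ coming from the classification in \cite{D2}, confirms that the exponents $(\beta+2-F)/2$ and $(\beta+2-F-2C)/2$ governing the $\Sigma^{1,0}A_0$-summands remain nonnegative integers throughout the induction.
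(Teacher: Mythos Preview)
Your proposal is correct and follows essentially the same inductive strategy as the paper: induction on $\beta$-genus via Theorem~\ref{nonfreeclass}, with the same cofiber sequences $S^{1,\epsilon}\hookrightarrow X\to\tilde Y$ for antitube surgeries and $S^{1,0}\hookrightarrow X\to Y$ for $FM$-surgery, and the same tools (Theorem~\ref{topm2}, the quotient lemma, freeness of the kernel) to resolve differentials and extensions. The one step you gloss over is computing $\tilde H^{*,*}(\tilde Y)$ when $Y$ is \emph{nonfree}: the method of Lemmas~\ref{freeplus11}--\ref{freeplus10} only determined $\tilde H^{*,*}(\tilde Y)$ as an $\mathbb F_2[\tau,\tau^{-1}]$-module because $Y$ was free there, whereas here the paper uses the geometric observation that $\tilde Y\simeq Y\vee S^{1,1}$ whenever $Y$ has a fixed point (obtained by choosing the collapsed free orbit inside a disk $D(\R^{2,2})$ around an isolated fixed point), which gives the full $\M_2$-module structure of $\tilde H^{*,*}(\tilde Y)$ immediately and makes the subsequent differential and extension analysis clean.
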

\begin{proof} 
We begin with case (i) and proceed by induction on the $\beta$-genus of $X$. If $\beta(X)=0$, then $X$ must be an equivariant sphere. The only equivariant sphere with isolated fixed points is $S^{2,2}$. By the suspension theorem, the reduced cohomology of $S^{2,2}$ is $\Sigma^{2,2}\M_2$, which matches the decomposition given in (i).

Now suppose $\beta(X)>0$. By Theorem \ref{nonfreeclass}, we know $X$ is either a doubling space, or $X$ can be obtained by doing $S^{1,0}-$, $S^{1,1}-$, or $FM-$surgery to a $C_2$-surface of lower $\beta$-genus. If $X$ is a doubling space, then we are done by Lemma \ref{doubspace11}. If $X$ is obtained by doing equivariant surgery, then $X$ must be isomorphic to $Y+[S^{1,1}-\text{AT}]$ for some $Y$ by consideration of the fixed set. If $Y$ is free, then by Lemma \ref{freeplus11} the cohomology of $X$ is given by 
	\[
	\tilde{H}^{*,*}(X;\underline{\Z/2}) \cong \left(\Sigma^{1,0} A_0\right)^{\frac{\beta(Y)+2}{2}}\oplus \Sigma^{2,2}\M_2.
	\]
We just need to check there are the appropriate number of summands of each module. Observe $F(X)=2$ and $\beta(X) = \beta(Y)+2$, so indeed
	\[
	F(X)-2=0, \text{ and} ~\tfrac{\beta(X)+2-F(X)}{2} = \tfrac{\beta(Y)+2}{2}
	\]
as desired. \smallskip

If $Y$ is nonfree, then by consideration of the fixed set of $X$, $F(Y)\not = 0$ while $C(Y)=0$. Thus by induction, the cohomology of $Y$ is given by
\begin{equation}\label{caseiY}
	\tilde{H}^{\ast, \ast}(Y)\cong  \left(\Sigma^{1,1} \M_2\right)^{\oplus F(Y)-2} \oplus \left(\Sigma^{1,0}A_0 \right)^{\oplus \frac{\beta(Y)+2-F(Y)}{2}} \oplus \Sigma^{2,2}\M_2.
\end{equation}
Consider the cofiber sequence \[S^{1,1}\hookrightarrow X \to \tilde Y,\] where as before $\tilde Y$ is the space appearing in the cofiber sequence 
\begin{equation}\label{eq:ytilde}
	C_{2+}\hookrightarrow Y_+ \to \tilde{Y}.
\end{equation} 
Since $Y$ has at least one fixed point, we claim $\tilde{Y}$ is homotopy equivalent to $Y \vee S^{1,1}$. To see why, let's be more careful with how we construct $\tilde{Y}$. Let $y\in Y^{C_2}$ be a chosen fixed point. Since $Y$ has only isolated fixed points, there is a disk $D$ in $Y$ such that $y\in D$ and $D\cong D(\R^{2,2})$. Let $x\in D$ be an interior point that is not fixed, and include $C_2$ into $Y$ as $\{x,\sigma x\}$. Let $\gamma$ be a path from $x$ to $y$ contained in the interior of $D$ such that when we quotient to $\tilde{Y}$ 
	\[
	\im(\gamma) \cup \im(\sigma\gamma) \cong S^{1,1}.
	\]
See Figure \ref{fig:dtilde} for an illustration of the image of $\gamma$ in $\tilde{Y}$. 

There is a homotopy from $\tilde{D}=\cof(C_2\hookrightarrow D)$ to $D\vee S^{1,1}$ that keeps the boundary of $D$ fixed, and thus can be extended to all of $\tilde{Y}$ to see $\tilde{Y}\simeq Y\vee S^{1,1}$. See the illustration below of the homotopy equivalence for $\tilde{D}$. Note the fixed set is in blue and the copy of $S^{1,1}$ is shown in red. 
\begin{figure}[ht]
	\includegraphics[scale=0.8]{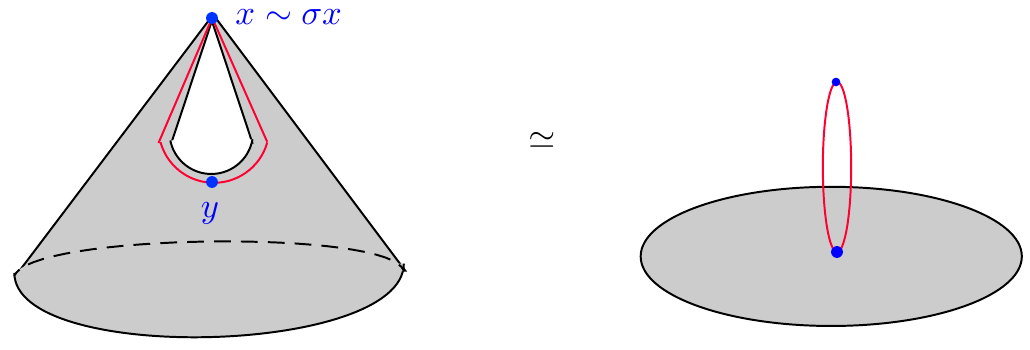}
	\caption{$\tilde{D} \simeq D\vee S^{1,1}$}
	\label{fig:dtilde}
\end{figure}

Using the homotopy discussed above, we see that
	\[
	\H^{*,*}(\tilde{Y}) \cong \H^{*,*}(Y) \oplus \Sigma^{1,1}\M_2.
	\]
The differential associated to the cofiber sequence in \ref{eq:ytilde} is shown below in Figure \ref{fig:caseicomp}. The number of summands of each of the blue submodules is omitted (note, in particular, the number of summands of $\Sigma^{1,0}A_0$ could be $0$). 
\begin{figure}[ht]
	\begin{tikzpicture}[scale=.45]
		\draw[help lines,light-gray] (-5.125,-5.125) grid (5.125, 5.125);
		\draw[<->] (-5.25,0)--(5.25,0)node[right]{$p$};
		\draw[<->] (0,-5.25)--(0,5.25)node[above]{$q$};
		\cone{2.2}{2}{blue};
		\cone{.8}{1}{blue};
		\anti{.7}{0}{blue};
		\cone{1}{1}{red};
		\draw[->] (1.5,1.5)--(2.5,1.5);
	\end{tikzpicture}
	\caption{The differential ${\color{red}{\tilde{H}^{*,*}(S^{1,1})}}\to{\color{blue}{\tilde{H}^{*,*}(\tilde{Y})}}$.}
	\label{fig:caseicomp}
\end{figure}
The differential must be zero because the generator of $\Sigma^{1,1}\M_2$ maps to the trivial group. Hence we must solve the extension problem
	\[
	0\to {\color{blue}{\tilde{H}^{*,*}(\tilde{Y})}} \to\tilde{H}^{*,*}(X) \to {\color{red}{\tilde{H}^{*,*}(S^{1,1})}}\to 0.  
	\]
The above splits since $\tilde{H}^{*,*}(S^{1,1})$ is a free $\M_2$-module, and so
\begin{align*}
	\tilde{H}^{*,*}(X) &\cong \tilde{H}^{*,*}(\tilde{Y}) \oplus \tilde{H}^{*,*}(S^{1,1})\\
	&\cong \tilde{H}^{*,*}(Y) \oplus \left(\Sigma^{1,1}\M_2\right)^{\oplus 2}.
\end{align*}
Putting this together with the isomorphism in \ref{caseiY}, the cohomology of $X$ is given by 
\begin{equation}\label{caseiX}
	\tilde{H}^{*,*}(X)\cong \left(\Sigma^{1,1} \M_2\right)^{\oplus F(Y)-2+2} \oplus \left(\Sigma^{1,0}A_0 \right)^{\oplus \frac{\beta(Y)+2-F(Y)}{2}} \oplus \Sigma^{2,2}\M_2.
\end{equation}
It remains to check there are the appropriate number of summands of each module. Adding an $S^{1,1}-$antitube increases both the number of isolated fixed points and the $\beta$-genus by $2$, which can be written as
\begin{align*}
	F(X)=F(Y)+2, \text{ and}~~~\beta(X)=\beta(Y)+2.
\end{align*}
The number of $\left(\Sigma^{1,1} \M_2\right)$-summands and the number of $\left(\Sigma^{1,0}A_0\right)$-summands in \ref{caseiX} can thus be written as
\begin{align*}
	F(Y)-2+2&=F(X)-2, \text{ and}\\
	\tfrac{\beta(Y)+2-F(Y)}{2}& = \tfrac{\beta(X)-2+2-(F(X)-2)}{2} = \tfrac{\beta(X)+2-F(X)}{2},
\end{align*}
respectively. We have completed the proof for case (i).\medskip

We now consider the case when $C(X)\neq 0$. We again proceed by induction on the $\beta$-genus of $X$. If $\beta(X)=0$, then $X$ must be an equivariant sphere, and the only equivariant sphere with a fixed circle is $S^{2,1}$. By the suspension theorem, the reduced cohomology of $S^{2,1}$ is $\Sigma^{2,1}\M_2$, which matches the decomposition given in (ii).

Now suppose $\beta(X)>0$. If $X$ is a doubling space, we are done by Lemma \ref{doubspace10}. We can thus assume $X$ is obtained by doing equivariant surgery to a $C_2$-surface $Y$ of lower $\beta$-genus. Since we know $X^{C_2}$ contains at least one fixed circle, we can assume $X$ is obtained by doing doing $S^{1,0}-$ or $FM-$ surgery to an equivariant surface.

Let's first assume $X\cong Y +[S^{1,0}-AT]$. If $Y$ is a free $C_2$-surface, then we are done after applying Lemma \ref{freeplus10} and noting $C(X)=1$ and $\beta(X)=\beta(Y)+2$. Thus suppose $Y$ is a nonfree $C_2$-surface and consider the cofiber sequence
	\[
	S^{1,0}\hookrightarrow X \to \tilde{Y}.
	\]
Since $Y^{C_2}$ is nonempty, we can make a similar argument as before to see $\tilde{Y} \simeq Y \vee S^{1,1}$ and so
	\[
	\tilde{H}^{*,*}(\tilde{Y}) \cong \H^{*,*}(Y)\oplus \Sigma^{1,1}\M_2.
	\]
 If $Y^{C_2}$ contains at least one fixed circle, then we know the cohomology of $Y$ by induction. If $Y^{C_2}$ contains only isolated fixed points, then we know the cohomology of $Y$ by case (i). Thus there are two possibilities for the differential appearing in the long exact sequence associated to the cofiber sequence above; both are shown below in Figure \ref{fig:caseiicomp1}. 
\begin{figure}[ht]
	\begin{tikzpicture}[scale=.45]
		\draw[help lines,light-gray] (-5.125,-5.125) grid (5.125, 5.125);
		\draw[<->] (-5.25,0)--(5.25,0)node[right]{$p$};
		\draw[<->] (0,-5.25)--(0,5.25)node[above]{$q$};
		\draw[thick,blue] (2.7,5)--(2.7,1.5)--(5,3.8);
		\draw[thick,blue] (2.7,-5)--(2.7,-.5)--(-2.2,-5);
		\cone{.9}{1}{blue};
		\draw[thick,blue] (1.3,5)--(1.3,.5)--(5,4.2);
		\draw[thick,blue] (1.3,-5)--(1.3,-1.5)--(-2.4,-5);
		\anti{.7}{0}{blue};
		\draw[thick,red] (1.6,5)--(1.6,.5)--(5,3.9);
		\draw[thick,red] (1.6,-5)--(1.6,-1.5)--(-1.9,-5);
		\draw[->] (1.6,0.5)--(2.5,0.5);
	\end{tikzpicture}\hspace{0.5in}
	\begin{tikzpicture}[scale=.45]
		\draw[help lines,light-gray] (-5.125,-5.125) grid (5.125, 5.125);
		\draw[<->] (-5.25,0)--(5.25,0)node[right]{$p$};
		\draw[<->] (0,-5.25)--(0,5.25)node[above]{$q$};
		\cone{2.2}{2}{blue};
		\cone{.9}{1}{blue};
		\draw[thick,blue] (1.3,5)--(1.3,.5)--(5,4.2);
		\draw[thick,blue] (1.3,-5)--(1.3,-1.5)--(-2.4,-5);
		\anti{.7}{0}{blue};
		\draw[thick,red] (1.6,5)--(1.6,.5)--(5,3.9);
		\draw[thick,red] (1.6,-5)--(1.6,-1.5)--(-1.9,-5);
		\draw[->] (1.6,0.5)--(2.7,0.5);
	\end{tikzpicture}
	\caption{The two cases for the differential ${\color{red}{\tilde{H}^{*,*}(S^{1,0})}}\to{\color{blue}{\tilde{H}^{*,*}(\tilde{Y})}}$.}
	\label{fig:caseiicomp1}
\end{figure}
The case illustrated on the left is when $Y^{C_2}$ contains at least one fixed circle. In this case, we immediately see the differential must be $0$ and conclude the extension is trivial because the kernel is a free $\M_2$-module. Thus 
	\[
	\tilde{H}^{*,*}(X) \cong \tilde{H}^{*,*}(Y) \oplus \Sigma^{1,1}\M_2 \oplus \Sigma^{1,0}\M_2,
	\]
which by induction gives
\begin{align*}
	\tilde{H}^{*,*}(X) \cong  &\Big(\left(\Sigma^{1,0} \M_2\right)^{\oplus C(Y)-1}\oplus  \left(\Sigma^{1,1} \M_2\right)^{\oplus F(Y)+C(Y)-1}\\
	& \oplus \left(\Sigma^{1,0}A_0 \right)^{\oplus \frac{\beta(Y)+2-(F(Y)+2C(Y))}{2}} \oplus \Sigma^{2,1}\M_2\Big) \oplus \Sigma^{1,1}\M_2 \oplus \Sigma^{1,0}\M_2\\
	\cong&\left(\Sigma^{1,0} \M_2\right)^{\oplus C(Y)}\oplus  \left(\Sigma^{1,1} \M_2\right)^{\oplus F(Y)+C(Y)}\\
	& \oplus \left(\Sigma^{1,0}A_0 \right)^{\oplus \frac{\beta(Y)+2-(F(Y)+2C(Y))}{2}} \oplus \Sigma^{2,1}\M_2.
\end{align*}
Recall $X\cong Y+[S^{1,0}-AT]$, and so 
	\[
	F(Y)=F(X),~C(Y)=C(X)-1,\text{ and}~\beta(Y)=\beta(X)-2.
	\]
By making the above substitutions, we arrive at the desired answer for case (ii).

The case on the right in Figure \ref{fig:caseiicomp1} is slightly more complicated. The differential cannot be zero in this case for if it were, the answer for the cohomology of $X$ would violate Theorem \ref{topm2}. Noting $d^{1,0}$ must be nonzero and using the module structure to determine $d^{p,q}$ for all $(p,q)$, we solve the extension problem
	\[
	0 \to {\color{blue}{\coker(d)}} \to \tilde{H}^{*,*}(X) \to {\color{red}{\ker(d)}} \to 0
	\]
which is illustrated below in Figure \ref{fig:extprobii}. The kernel of $d$ is shown in red while the cokernel is shown in blue. 

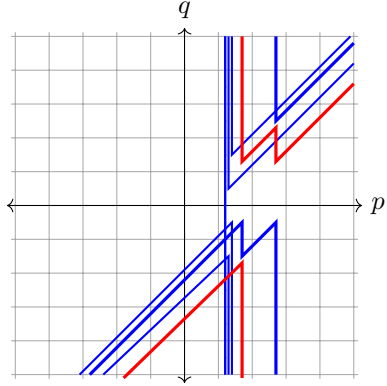
\begin{figure}[ht]
	\begin{tikzpicture}[scale=.45]
		\draw[help lines,light-gray] (-5.125,-5.125) grid (5.125, 5.125);
		\draw[<->] (-5.25,0)--(5.25,0)node[right]{$p$};
		\draw[<->] (0,-5.25)--(0,5.25)node[above]{$q$};
		\cone{.9}{1}{blue};
		\draw[thick,blue] (1.3,5)--(1.3,.5)--(5,4.2);
		\draw[thick,blue] (1.3,-5)--(1.3,-1.5)--(-2.4,-5);
		\anti{.7}{0}{blue};
		\draw[very thick,blue] (2.7,5)--(2.7,2.5)--(5,4.8);
		\draw[very thick,blue] (2.7,-5)--(2.7,-.5)--(1.7,-1.5)--(1.7,-.5)--(-2.8,-5);
		\draw[very thick,red] (1.7,-5.1)--(1.7,-1.7)--(-1.8,-5.1);
		\draw[very thick,red] (1.7,5)--(1.7,1.3)--(2.7,2.3)--(2.7,1.3)--(5,3.6);
	\end{tikzpicture}
	\caption{The extension problem.}
	\label{fig:extprobii}
\end{figure}

From Theorem \ref{topm2}, we have that $\Sigma^{2,1}\M_2$ must be a summand of $\tilde{H}^{*,*}(X)$, and so the extension is nontrivial. In particular, the extension must be given by
\begin{align*}
	\tilde{H}^{*,*}(X) \cong \left(\Sigma^{1,1} \M_2\right)^{\oplus F(Y)-2} \oplus \left(\Sigma^{1,0}A_0 \right)^{\oplus \frac{\beta(Y)+2-F(Y)}{2}} \oplus \Sigma^{1,1}\M_2 \oplus \Sigma^{2,1}\M_2
\end{align*}
where the last two summands arise from the nontrivial extension. It remains to check there are the appropriate number of summands of each submodule. Recall $X\cong Y +[S^{1,0}-AT]$ where $Y^{C_2}$ consisted only of isolated fixed points. Thus
	\[
	F(Y)=F(X), ~C(X)=1,\text{ and}~\beta(Y)=\beta(X)-2.
	\]
Making the substitution for $F(Y)$ and $\beta(Y)$ and noting $C(X)-1=0$, we arrive at the desired decomposition given in (ii).\smallskip

The only remaining case is when $X\cong Y+[FM]$ for some $C_2$-surface $Y$ of lower $\beta$-genus. Let $D$ be an equivariant closed neighborhood containing the attached M\"obius band so that $D \simeq M \simeq S^{1,0}$. Consider the following cofiber sequence
	\[
	D \hookrightarrow Y+[FM] \to Y.
	\]
Notice $Y^{C_2}$ must be nonempty in order to do $FM-$surgery, so we know the cohomology of $Y$ either from induction or from part (i). There are two cases for the cohomology of $Y$ depending on whether or not $Y^{C_2}$ contains a fixed circle. Similar to when $X\cong Y+[S^{1,0}-AT]$, this yields two cases for the differentials appearing in the long exact sequences associated to the above cofiber sequence. Though, note the cohomology of $Y$ will now be appearing rather than the cohomology of $\tilde{Y}$. 

The two cases for the differential are shown in Figure \ref{fig:caseiicomp2} below. We only include the relevant portion of the cohomology of $Y$ in the picture, noting the other summands cannot be in the image of the differential for degree reasons as in Figure \ref{fig:caseiicomp1}.
\begin{figure}[ht]
	\begin{tikzpicture}[scale=.45]
		\draw[help lines,light-gray] (-5.125,-5.125) grid (5.125, 5.125);
		\draw[<->] (-5,0)--(5,0)node[right]{$p$};
		\draw[<->] (0,-5)--(0,5)node[above]{$q$};
		\draw[thick,blue] (2.4,5)--(2.4,1.5)--(5,4.1);
		\draw[thick,blue] (2.4,-5)--(2.4,-.5)--(-2.1,-5);
		\draw[thick,red] (1.6,5)--(1.6,.5)--(5,3.9);
		\draw[thick,red] (1.6,-5)--(1.6,-1.5)--(-1.9,-5);
		\draw[->](1.6,.5)--(2.6,.5);
	\end{tikzpicture}\hspace{.5in}
	\begin{tikzpicture}[scale=.45]
		\draw[help lines,light-gray] (-5.125,-5.125) grid (5.125, 5.125);
		\draw[<->] (-5,0)--(5,0)node[right]{$p$};
		\draw[<->] (0,-5)--(0,5)node[above]{$q$};
		\cone{2}{2}{blue};
		\draw[thick,red] (1.6,5)--(1.6,.5)--(5,3.9);
		\draw[thick,red] (1.6,-5)--(1.6,-1.5)--(-1.9,-5);
		\draw[->](1.6,.5)--(2.5,.5);
	\end{tikzpicture}
	\caption{The two cases for the differential $d:{\color{red}{\H^{*,*}(S^{1,0})}}\to {\color{blue}{\H^{*,*}(Y)}}$.}
	\label{fig:caseiicomp2}
\end{figure}
The picture on the left shows the case when the fixed set of $Y$ contains at least one fixed circle. In this case, we quickly see the differential must be zero and the extension must be trivial. Thus
\begin{align*}
	\tilde{H}^{*,*}(X) \cong \tilde{H}^{*,*}(Y) \oplus \Sigma^{1,0}\M_2.
\end{align*}
By induction, we have
\begin{align*}
	\tilde{H}^{*,*}(X) \cong &  \Big(\left(\Sigma^{1,0} \M_2\right)^{\oplus C(Y)-1}\oplus  \left(\Sigma^{1,1} \M_2\right)^{\oplus F(Y)+C(Y)-1}\\
	&\oplus \left(\Sigma^{1,0}A_0 \right)^{\oplus \frac{\beta(Y)+2-(F(Y)+2C(Y))}{2}} \oplus \Sigma^{2,1}\M_2\Big) \oplus \Sigma^{1,0}\M_2\\
	\cong &   \left(\Sigma^{1,0} \M_2\right)^{\oplus C(Y)}\oplus  \left(\Sigma^{1,1} \M_2\right)^{\oplus F(Y)+C(Y)-1}\\
	&\oplus \left(\Sigma^{1,0}A_0 \right)^{\oplus \frac{\beta(Y)+2-(F(Y)+2C(Y))}{2}} \oplus \Sigma^{2,1}\M_2.
\end{align*}
Recall $X\cong Y+[FM]$, and so
	\[
	F(Y)=F(X)+1,~C(Y)=C(X)-1, \text{ and} ~ \beta(Y)=\beta(X)-1.
	\]
These substitutions will give the decomposition stated in (ii).

The final case is $X\cong Y+[FM]$ where $Y^{C_2}$ consists only of isolated fixed points. We return to the differential shown on the right in Figure \ref{fig:caseiicomp2}. Observe on the quotient level, doing $FM-$surgery removes a disk. Thus $X/C_2$ is $Y$ with a disk removed, so $H^{2}_{sing}(X/C_2)=0$. By Lemma \ref{quotient}, $H^{2,0}(X) \cong H^{2}_{sing}(X/C_2)$, and so we conclude $d^{1,0}$ must be an isomorphism. We now must solve the extension problem shown below. 
\begin{figure}[ht]
	\begin{tikzpicture}[scale=.45]
		\draw[help lines,light-gray] (-5.125,-5.125) grid (5.125, 5.125);
		\draw[<->] (-5,0)--(5,0)node[right]{$p$};
		\draw[<->] (0,-5)--(0,5)node[above]{$q$};
		\draw[blue,thick] (2.5,5)--(2.5,2.5)--(5,5);
		\draw[blue,thick] (2.5,-5)--(2.5,-.5)--(1.5,-1.5)--(1.5,-.5)--(-3,-5);
		\draw[red,thick] (1.5,5)--(1.5,1.4)--(2.5,2.4)--(2.5,1.4)--(5,3.9);
		\draw[red,thick] (1.5,-5)--(1.5,-1.6)--(-1.9,-5);
	\end{tikzpicture}
	\caption{The extension problem.}
\end{figure}
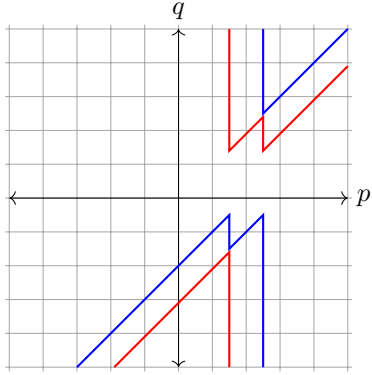
The extension problem is solved after applying Theorem \ref{topm2}, and we conclude 
\begin{align*}
	\tilde{H}^{*,*}(X) &\cong \left(\Sigma^{1,1} \M_2\right)^{\oplus F(Y)-2} \oplus \left(\Sigma^{1,0}A_0 \right)^{\oplus \frac{\beta(Y)+2-F(Y)}{2}} \oplus \Sigma^{1,1}\M_2 \oplus \Sigma^{2,1}\M_2\\
	&\cong  \left(\Sigma^{1,1} \M_2\right)^{\oplus F(Y)-1} \oplus \left(\Sigma^{1,0}A_0 \right)^{\oplus \frac{\beta(Y)+2-F(Y)}{2}} \oplus \Sigma^{2,1}\M_2.
\end{align*}
Observe $F(Y)=F(X)+1$, $C(X)=1$, and $\beta(Y)=\beta(X)-1$. Making these substitutions will finish the proof.
\end{proof}
%
%
\appendix
\section{Proof of Theorem \ref{topm2}}\label{ch:manfacts}
In this appendix we provide a proof of Theorem \ref{topm2} which is given as Theorem \ref{topm22} below. Here by ``manifold" we mean a piecewise linear manifold, and by $C_2$-action, we mean a locally linear $C_2$-action. Note this is sufficient to guarantee the fixed set is a disjoint union of submanifolds.

\begin{theorem}\label{topm22} 
Let $X$ be an $n$-dimensional, closed $C_2$-manifold with a nonfree $C_2$-action. Suppose $n-k$ is the largest dimension of submanifold appearing as a component of the fixed set. Then there is exactly one summand of $\H^{*,*}(X;\underline{\Z/2})$ of the form $\Sigma^{i,j}\M_2$ where $i\geq n$, and it occurs for $(i,j)=(n,k)$.
\end{theorem}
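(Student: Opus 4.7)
The plan is to decompose $H^{*,*}(X;\underline{\Z/2})$ using Theorem \ref{structure} as $\bigoplus_s \Sigma^{i_s, j_s}\M_2 \oplus \bigoplus_t \Sigma^{p_t, 0}A_{n_t}$ and to constrain the summands with $i \geq n$ using three tools: the forgetful long exact sequence (Lemma \ref{forgetfulles}), $\rho$-localization (Lemma \ref{rholocal}), and an equivariant Thom isomorphism for the top-dimensional fixed stratum.

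First I would apply the forgetful LES together with the vanishing of $H^p_{sing}(X;\mathbb{F}_2)$ for $p > n$ to see that $\rho: H^{p-1, q-1}(X) \to H^{p, q}(X)$ is surjective for $p > n$. Since $\rho$ respects the direct-sum decomposition, this surjectivity must hold summand by summand. The free generator of $\Sigma^{i, j}\M_2$ at bidegree $(i, j)$ cannot arise as $\rho$-image within its own summand, because the source bidegree $(i-1, j-1)$ is zero in the shifted module; hence no free summand can have $i > n$. The same obstruction shows any $\Sigma^{n, j}\M_2$-summand has generator $g$ with $\psi(g) \neq 0 \in H^n_{sing}(X) \cong \mathbb{F}_2$, and that two such summands with generators $g_1, g_2$ at weights $j_1 \leq j_2$ would produce a class $\tau^{j_2 - j_1} g_1 + g_2 \in \ker\psi = \mathrm{im}(\rho)$ whose $g_2$-component is again unreachable by $\rho$, a contradiction. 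Hence at most one free summand has $i = n$.

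For existence at bidegree $(n, k)$, I would construct a fundamental class $[X] \in H^{n, k}(X)$ via the equivariant Thom isomorphism applied to the top fixed stratum $F \subset X^{C_2}$, which is an $(n-k)$-submanifold. Its equivariant normal bundle in $X$ has sign-representation fibers $\R^{k, k}$, so the Thom isomorphism in $\underline{\Z/2}$-coefficients shifts bidegrees by $(k, k)$ and carries the singular top class of $F$ (sitting at $(n-k, 0)$ by Lemma \ref{trivial}) to a class in $H^{n, k}(N, N\setminus F) \cong H^{n, k}(X, X\setminus F)$, whose pushforward is $[X]$. The analogous singular computation shows $\psi([X])$ is the singular fundamental class of $X$, hence nonzero. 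To confirm $[X]$ generates a free summand rather than landing in an $A_0$-summand at $p_t = n$, I would use $\rho$-localization: the restriction $\iota([X]) \in H^{*,*}(X^{C_2})$ equals $\rho^k \otimes [F]$, whose $\rho$-localization is nonzero, whereas every $A$-summand vanishes under $\rho^{-1}$, so some free summand must carry a nonzero component of $[X]$.

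Combining these, the nonvanishing of $\psi([X])$ at $(n, k)$, together with the fact that contributions from free summands with $i < n$ and from $A_{n_t}$-summands with $n_t \geq 1$ all lie in $\ker\psi$, forces at least one free summand with $i = n$ and $j \leq k$; the same $\rho$-unreachability argument rules out a simultaneous $A_0$-summand at $p_t = n$. By the uniqueness from the second paragraph, this is the only free summand with $i \geq n$. To pin down $j = k$, I invoke Lemma \ref{rholocal}: the summand $\Sigma^{n, j}\M_2$ localizes to $\Sigma^{n-j, 0}\rho^{-1}\M_2$ corresponding to a class in $H^{n-j}_{sing}(X^{C_2})$, which vanishes when $n - j > n - k$, so $j \geq k$; combined with $j \leq k$, this gives $j = k$. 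The main obstacle I anticipate is the clean construction of the top class via the equivariant Thom isomorphism, ensuring the bidegree shift is exactly $(k, k)$ for the sign-representation normal bundle and verifying the obstruction to $A_0$-contamination.
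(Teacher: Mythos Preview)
Your outline tracks the paper's proof in two of its three stages: both use the forgetful long exact sequence to bound summands at topological degree $\geq n$, and both invoke $\rho$-localization for the inequality $j \geq k$. For producing a class at $(n,k)$ with $\psi \neq 0$, the paper takes the more elementary route of collapsing a disk around a single fixed point, $q\colon X \to X/(X-D) \cong S^{n,k}$, and reading off $\psi(q^*a)\neq 0$ from naturality; your Thom-isomorphism construction is equivalent in spirit but heavier, and as you note requires care.

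The genuine gap is in ruling out the possibility that the unique summand at $p=n$ is $\Sigma^{n,0}A_b$ rather than free. Your claim that $A_{n_t}$-summands with $n_t \geq 1$ contribute only to $\ker\psi$ at $(n,k)$ fails precisely when $p_t = n$: the column-zero element there is not a $\rho$-multiple, so it need not lie in $\ker\psi$. And your $\rho$-localization step only shows $[X]$ has a nonzero component in \emph{some} free summand; if the summand at $p=n$ happens to be $A_b$, then every free summand has $m_i < n$, and the nonzero free component of $[X]$ simply lives in one of those lower pieces---no contradiction arises. Neither $\psi([X]) \neq 0$ nor $\rho^{-1}[X] \neq 0$ alone distinguishes $\Sigma^{n,k}\M_2$ from $\Sigma^{n,0}A_b$ sitting beside lower free summands. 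The paper fills this gap by a different maneuver: it punctures $X$ at a free orbit $\{p,\sigma p\}$, shows $H^{n,0}(X\setminus\{p,\sigma p\})=0$ via a cellular argument on the quotient, and then derives a contradiction from the square relating $\psi$ on $H^{n,0}(X,X\setminus\{p,\sigma p\})\to H^{n,0}(X)$ to the diagonal and fold maps on singular cohomology, since $\nabla\circ\Delta = 0$ in $\Z/2$ while under the $A_b$-hypothesis the other composite is surjective. This punctured-orbit argument is the ingredient your plan is missing.
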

\begin{proof} 
If $X$ is a trivial space, then this follows immediately from Lemma \ref{trivial} and facts about the singular cohomology of closed $n$-manifolds in $\Z/2$-coefficients. Thus assume $X$ is nontrivial. We first show there is a unique summand generated in topological dimension $n$, we then show it must be free, and lastly we argue it must be in weight $k$.

From the structure theorem given in Theorem \ref{structure}, the cohomology of $X$ must have a direct sum decomposition given by

\begin{equation}\label{eq:decomp}
	H^{*,*}(X) \cong (\oplus_{i} \Sigma^{m_{i},k_{i}}\M_2) \oplus (\oplus_{j} \Sigma^{r_j}A_{s_j}).
\end{equation}
Consider the following portion of the forgetful long exact sequence for $X$:
	\[
	H^{p-1,q}(X)\overset{\rho}{\longrightarrow} H^{p,q+1}(X)\overset{\psi}{\longrightarrow} H^{p}_{sing}(X)\longrightarrow H^{p,q}(X)
	\]
Since $X$ is a closed $n$-manifold, $H^{p}_{sing}(X)=0$ for $p>n$ while $H^{n}_{sing}(X)\cong \Z/2$. By exactness, it must be that $H^{p,q}(X) = \im(\rho)$ for $p>n$, and when $p=n$, there are two possibilities: either $H^{n,q}(X) = \im(\rho)$ or $H^{n,q}(X)/\im(\rho)\cong \Z/2$. Returning to \ref{eq:decomp}, this immediately implies $m_i,r_j \leq n$ for all $i$, $j$. We claim this also implies there are either zero summands or one summand generated in topological dimension $n$; that is there is at most one $i$ or $j$ such that $m_i=n$ or $r_j=n$. Indeed, if there were two or more summands generated in topological dimension $n$, then there would exist a sufficiently large $q$ such that $\dim\left(H^{n,q}(X)/\im(\rho)\right)\geq 2$.

To see there is a summand generated in topological dimension $n$, pick a point $x\in X^{C_2}$ that is contained in a connected component of dimension $n-k$, where recall $n-k$ is the maximum dimension. There exists an open equivariant disk $D$ such that $x\in D\subset X$ and $D\cong \R^n$ nonequivariantly. By consideration of the fixed set, we see that $D\cong D(\R^{n,k})$ where $D(\R^{n,k})$ denotes the unit disk in $\R^{n,k}$. Consider the quotient map
	\[
	q:X\to X/(X-D) \cong S^{n,k}.
	\]

We have the following commutative square involving the forgetful map:
\begin{center}
	\begin{tikzcd}
	\H^{n,k}(S^{n,k}) \arrow[r,"\psi"]\arrow[d,"q^*"]& \H^{n}_{sing}(S^{n,k})\arrow[d,"q^*"]\\
	\H^{n,k}(X) \arrow[r,"\psi"]& \H^{n}_{sing}(X)
	\end{tikzcd}
\end{center}
Recall $\H^{*,*}(S^{n,k})\cong \Sigma^{n,k}\M_2$ by the suspension isomorphism, so the top map is an isomorphism. The right vertical map is also an isomorphism because $X$ is a closed $n$-manifold. By commutativity of the square, the forgetful map $\psi:\H^{n,k}(X) \to \H^{n}_{sing}(X)$ must be nonzero. Returning to the forgetful long exact sequence above, we see $H^{n,k}(X)/\im(\rho)\cong \Z/2$ and there is indeed exactly one summand generated in topological dimension $n$. \medskip

There are thus two options for the cohomology of $X$. Either
\begin{equation}\label{eq:case1}
	H^{*,*}(X) \cong (\oplus_{i} \Sigma^{m_{i},k_{i}}\M_2) \oplus (\oplus_{j} \Sigma^{r_j}A_{s_j}) \oplus \Sigma^{n}A_b; \text{ or}
\end{equation}
\begin{equation}\label{eq:case2}
	H^{*,*}(X) \cong (\oplus_{i} \Sigma^{m_{i},k_{i}}\M_2) \oplus (\oplus_{j} \Sigma^{r_j}A_{s_j})  \oplus \Sigma^{n,c}\M_2.
\end{equation}
where in both equations $m_i,r_j<n$. We show the first case cannot happen.

Suppose to the contrary the cohomology of $X$ is given by \ref{eq:case1}. Let $p$ be a nonfixed point and consider the punctured space $X-\{p,\sigma p\}\simeq X-\{D(p), \sigma D(p)\}$ where $D(p)$ is a small open disk around $p$ that does not intersect its conjugate disk $\sigma D(p)$. Note $X-\{D(p),\sigma D(p)\}$ is an $n$-manifold with boundary, so 
	\[
	H^{j}_{sing}(X-\{D(p),\sigma D(p)\})=0 \text{ for } j\geq n.
	\] 
We can put a $C_2$-CW structure on $X-\{D(p),\sigma D(p)\}$ with no cells of dimension greater than $n$. The map $q:X-\{D(p),\sigma D(p)\}\to (X-\{D(p),\sigma D(p)\})/C_2$ will be a cellular map that induces a levelwise surjective map on the cellular chain complexes. Using this map of chain complexes and the above fact, a diagram chase shows
	\[
	H^{j}_{sing}((X-\{D(p),\sigma D(p)\})/C_2)=0 \text{ for } j\geq n.
	\] 
By the quotient lemma given in \ref{quotient}, it follows that $H^{j,0}(X-\{p,\sigma p\})=0$ for $j\geq n$.

Consider the pair $(X,X-\{p,\sigma p\})$. Note
	\[
	H^{*,*}(X,X-\{p,\sigma p\}) \cong \H^{*,*}(X/(X-\{D(p),\sigma D(p)\})) \cong \H^{*,*}(C_{2+}\wedge S^n) \cong \Sigma^{n}A_0.
	\]
We have the following diagram where the rows are exact:
\begin{center}
	\begin{tikzcd}
	H^{n,0}(X,X-\{p,\sigma p\})\arrow[r] \arrow[d,"\psi"]&H^{n,0}(X) \arrow[r]\arrow[d,"\psi"] &H^{n,0}(X-\{p,\sigma p\})\arrow[d,"\psi"]\\
	H^{n}_{sing}(X,X-\{p,\sigma p\})\arrow[r] &H^{n}_{sing}(X) \arrow[r] &H^{n}_{sing}(X-\{p,\sigma p\})
	\end{tikzcd}
\end{center}
The right-hand groups are both zero by the above discussion, so the left horizontal maps are both surjective. The middle vertical map is surjective based on the decomposition given in \ref{eq:case1}, while the left vertical map is given by the diagonal map
	\[
	\H^{*,*}(C_{2+}\wedge S^n) \cong \H^*_{sing}(S^n)\to \H^{*}_{sing}(S^n\vee S^n)\cong \H^*_{sing}(S^n)\oplus \H^{*}_{sing}(S^n).
	\]
Thus we have the following commutative diagram coming from the left square where $\Delta$ is the diagonal map and $\nabla$ is the fold map.
\begin{center}
	\begin{tikzcd}
	\Z/2\arrow[r,twoheadrightarrow] \arrow[d,"\Delta"]&H^{n,0}(X) \arrow[d,"\psi", twoheadrightarrow] \\
	\Z/2\oplus \Z/2\arrow[r,"\nabla"] &\Z/2
	\end{tikzcd}
\end{center}
We have arrived at a contradiction: going around the diagram one way is zero, while the other way is nonzero. We conclude the cohomology of $X$ must have a decomposition as in \ref{eq:case2}. In particular, there is a unique free summand in topological dimension $n$ and furthermore there are no other summands generated in topological dimension greater than or equal to $n$. \medskip

We now show this free summand is generated in weight $k$. Let's reconsider the quotient map
	\[
	q:X\to X/(X-D) \cong S^{n,k}.
	\]
Let $s\geq k$. We have the following map between the forgetful long exact sequences for $X$ and $S^{n,k}$.
\begin{center}
	\begin{tikzcd}
	\dots\arrow[r]&\H^{n-1,s-1}(S^{n,k}) \arrow[r,"\rho"]\arrow[d,"q^*"] &\H^{n,s}(S^{n,k}) \arrow[r,"\psi"]\arrow[d,"q^*"]& \H^{n}_{sing}(S^{n,k})\arrow[d,"q^*","\cong" left]\arrow[r]&\dots\\
	\dots\arrow[r]&\H^{n-1,s-1}(X) \arrow[r,"\rho"] &\H^{n,s}(X) \arrow[r,"\psi"]& \H^{n}_{sing}(X)\arrow[r]&\dots
	\end{tikzcd}
\end{center}
Recall $\H^{*,*}(S^{n,k}) \cong \Sigma^{n,k}\M_2$ by the suspension isomorphism. We provide an illustration of this cohomology below for reference. 
\begin{figure}[ht]
	\begin{tikzpicture}[scale=.4]
		\draw[help lines,light-gray] (-2.125,-2.125) grid (7.125, 7.125);
		\draw[thick,<->] (-2,0)--(7,0)node[right]{$p$};
		\draw[thick,<->] (0,-2)--(0,7)node[above]{$q$};
		\draw[thick] (4,.2)--(4,-.2)node[below]{$n$};
		\draw[thick] (.2,3)--(-.2,3)node[left]{$k$};
		\draw[very thick] (4.5,7)--(4.5,3.5)--(7,6);
		\draw[very thick] (4.5,-2)--(4.5,1.5)--(1,-2);
\end{tikzpicture}
\caption{The reduced cohomology of $S^{n,k}$.}
\end{figure}
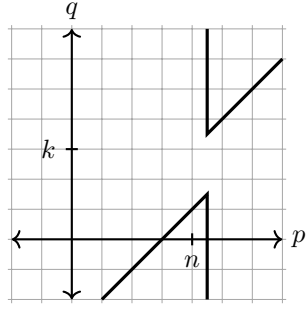

The above map of long exact sequences is thus
\begin{center}
	\begin{tikzcd}
	0 \arrow[r]\arrow[d,"q^*"] &\Z/2 \arrow[r,"\psi","\cong" below]\arrow[d,"q^*"]& \Z/2 \arrow[d,"q^*","\cong" left]\\
	\H^{n-1,s-1}(X) \arrow[r,"\rho"] &\H^{n,s}(X) \arrow[r,"\psi"]& \Z/2
	\end{tikzcd}
\end{center}
The square on the right shows 
	\[
	q^*:\H^{n,s}(S^{n,k}) \to \H^{n,s}(X)
	\]
is injective for all $s\geq k$. If we let $a\in \H^{n,k}(S^{n,k})$ be the generator, the exactness also shows the nonzero element $q^*a$ is not in the image of $\rho$.

Returning to decomposition given in \ref{eq:case2}, we can write $q^*a$ as an $\M_2$-combination of generators of the summands. Observe a $\tau$-multiple of the generator of the summand $\Sigma^{n,c}\M_2$ must appear in this linear combination since otherwise $q^*a$ would be in the image of $\rho$. Thus the weight $c$ must be less than or equal to $k$. 

To see $c=k$, note $\rho$-localization will yield a generator in $\H^{n-c}_{sing}(X^{C_2})$. Since $n-k$ is the largest dimension appearing in the fixed set, it must be that $n-c\leq n-k$ or $c\geq k$. We conclude $c=k$, as desired.
\end{proof}

We end by mentioning one corollary of the above proof.

\begin{corollary} Let $X$ be an $n$-dimensional nonfree $C_2$-manifold and let $x\in X^{C_2}$ be a point in a component of the fixed set of smallest codimension $k$. Then the map $q:X\to S^{n,k}$ that collapses the complement of a small disk around $x$ to a point induces a split injection.
\end{corollary}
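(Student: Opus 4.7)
The approach is to combine the injectivity of $\M_2$ over itself (Theorem \ref{injective}) with the structural information produced inside the proof of Theorem \ref{topm22}. Since $\tilde{H}^{*,*}(S^{n,k}) \cong \Sigma^{n,k}\M_2$ by the suspension isomorphism is a shifted free module of rank one, it is injective as an $\M_2$-module, so any injection from it into $\tilde{H}^{*,*}(X)$ automatically admits a retraction. The task therefore reduces to proving that $q^*$ is injective.

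Since $\Sigma^{n,k}\M_2$ is cyclic on the suspension generator $a \in \tilde{H}^{n,k}(S^{n,k})$, injectivity of $q^*$ is equivalent to the vanishing of the annihilator of $q^*a$ in $\M_2$. The proof of Theorem \ref{topm22} produced a decomposition
\[\tilde{H}^{*,*}(X) \cong \Sigma^{n,k}\M_2 \oplus M,\]
where $M$ is a sum of summands $\Sigma^{m_i,k_i}\M_2$ with $m_i < n$ and $\Sigma^{r_j}A_{s_j}$ with $r_j < n$, and also showed $q^*a \notin \im(\rho)$. The key observation is that every element of $M$ lying in topological dimension $n$ is in $\im(\rho)$: in each $\Sigma^{m_i,k_i}\M_2$ top cone one reaches dimension $n$ only by multiplying by $\rho^{n-m_i}$ with $n-m_i>0$, the corresponding bottom cones contribute only in topological dimensions $\leq m_i < n$, and in each $\Sigma^{r_j}A_{s_j}$ the generator sits in topological dimension $r_j < n$ so reaching dimension $n$ again demands a positive power of $\rho$.

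Writing $q^*a = c\cdot g + h$ with $g$ the generator of the $\Sigma^{n,k}\M_2$ summand, $c \in \Z/2 = \M_2^{0,0}$, and $h \in M$ in topological dimension $n$: if $c = 0$ then $q^*a = h \in \im(\rho)$, contradicting what the proof of Theorem \ref{topm22} established. Hence $c = 1$ and $q^*a = g + h$. If $m \in \M_2$ satisfies $m \cdot q^*a = 0$, then projecting onto the $\Sigma^{n,k}\M_2$ summand forces $m \cdot g = 0$; since $g$ generates a free summand we conclude $m = 0$. Thus $q^*$ is injective, and splits by the injectivity of $\Sigma^{n,k}\M_2$. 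The main obstacle is the bookkeeping behind the key observation, which requires separately inspecting both cones of the shifted $\M_2$-summands and the $\tau$-periodic $A_{s_j}$-summands; once that is in hand the argument collapses to the cyclic-annihilator computation above.
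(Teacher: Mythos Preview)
Your proof is correct and follows essentially the same approach as the paper's. The paper's argument is terser---it simply asserts that the proof of Theorem \ref{topm22} showed $q^*a$ generates a free summand, then invokes self-injectivity of $\M_2$---while you unpack the implicit step (your ``key observation'' that every bidegree-$(n,\ast)$ element of $M$ lies in $\im(\rho)$, forcing the $g$-coefficient of $q^*a$ to be $1$) and phrase the conclusion via the annihilator; but the logical content is the same.
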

\begin{proof} In the proof above, we showed $q^*a$ where $a$ is the generator of $\H^{*,*}(S^{n,k})$ generates a free summand of $H^{*,*}(X)$. This implies the map is injective, and it is split because $\M_2$ is self-injective. 
\end{proof}

\bibliographystyle{amsalpha}

\begin{thebibliography}{ABCD}

\bibitem[AM]{AM} S. Araki, M. Murayama, {\emph{$\tau$-cohomology theories}}, Japan J. Math. (N.S.) {\bf{4}} (1978), no. 2, 363-416.

\bibitem[BCNS]{BCNS} E. Bujalance, E. A. F. Costa, S. M. Natanzon, and D. Singerman, {\emph{Involutions of compact Klein Surfaces}}, Math. Z. {\bf{211}} (1992), 461--478.

\bibitem[CHT]{CHT} S. R. Costenoble, T. Hudson, S. Tilson, \emph{The $\mathbb{Z}/2$-equivariant cohomology of complex projective spaces}. Preprint (2018). \href{https://arxiv.org/abs/1811.07355}{arXiv:1811.07355}

\bibitem[D1]{D1} D. Dugger, \emph{An Atiyah-Hirzebruch spectral sequence for $KR$-theory}, $K$-Theory {\bf{35}} (2005), no. 3--4, 213--256.

\bibitem[D2]{D2} D. Dugger, \emph{Involutions on Surfaces}, Journal of Homotopy and Related Structures (2019).

\bibitem[Ha1]{H1} C. Hazel, \emph{Equivariant fundamental classes in $RO(C_2)$-graded cohomology in $\underline{\mathbb{Z}/2}$-coefficients}. Preprint (2019).

\bibitem[Ha2]{H2} C. Hazel, \emph{The {$RO(C_2)$}-graded cohomology of {$C_2$}-Surfaces in {$\underline{\mathbb{ Z}}$-coefficients}}, in preparation.

\bibitem[Ho]{Ho} E. Hogle, \emph{$RO(C_2)$-graded cohomology of equivariant Grassmannian manifolds}, preprint (2018). \href{https://arxiv.org/abs/1806.01537}{arXiv:1806.01537}

\bibitem[K1]{K1} W. C. Kronholm, \emph{A freeness theorem for {$RO(\mathbb{Z}/2)$}-graded cohomology}, Topology Appl. {\bf{157}} (2010), no. 5, 902--915.

\bibitem[K2]{K2} W. C. Kronholm, \emph{On the equivariant cohomology of rotation groups and Stiefel manifolds}, preprint (2011). \href{https://arxiv.org/abs/1107.5865}{arXiv:1107.5865}

\bibitem[L]{L} L. G. Lewis, \emph{The {$RO(G)$}-graded equivariant ordinary cohomology of complex projective spaces with linear {${\bf Z}/p$} actions}, appearing in \emph{Algebraic topology and transformation groups}, ed. tom Dieck, Lecture Notes in Math., vol. 1361, Springer, 1988.

\bibitem[LFdS1]{LFdS1} P.F. dos Santos and P. Lima-Filho, \emph{Bigraded equivariant cohomology of real quadrics}, Adv. Math. {\bf{221}} (2009), no. 4, 1247--1280.

\bibitem[LFdS2]{LFdS2} P.F. dos Santos and P. Lima-Filho, \emph{Bigraded invariants for real curves}, Algebr. Geom. Topol. {\bf{14}} (2014), no. 5, 2809--2852.

\bibitem[M1]{M1} C. May, \emph{A structure theorem for $RO(C_2)-$graded Bredon cohomology}, preprint (2018). \href{https://arxiv.org/abs/1804.03691}{arXiv:1804.03691}

\bibitem[M2]{M2} J. P. May, \emph{Equivariant homotopy and cohomology theory}, published for the Conference Board of the Mathematical Sciences, Washington, DC, with contributions by M. Cole, G. Comeza\~na, S. Costenoble, A.D. Elmendorf, J.P.C. Greenlees, L.G. Lewis Jr., R.J. Piacenza, G. Triantafillou, S. Waner, CBMS Reg. Conf. Ser. Math {\bf{91}} (1996).

\bibitem[dS]{dS} P. F. dos Santos, {\emph{A note on the equivaiant Dold-Thom theorem}}, J. Pure Appl. Algebra {\bf{183}} (2003), no. 1--3, 299--312.

\bibitem[S]{S} M.E. Shulman, \emph{Equivariant local coefficients and the RO(G)-graded cohomology of classifying spaces}, doctoral dissertation (2014), University of Chicago. \href{https://arxiv.org/abs/1405.1770}{arXiv:1405.1770}

\end{thebibliography}

\end{document}